\numberwithin{equation}{section}
\newtheorem{theorem}{Theorem}
\newtheorem{thm}[theorem]{Theorem}
\newtheorem{corollary}[theorem]{Corollary}
\newtheorem{lemma}[theorem]{Lemma}
\newtheorem{prop}[theorem]{Proposition}
\newtheorem{cor}[theorem]{Corollary}
\newtheorem{remark}[theorem]{Remark}
\theoremstyle{remark}
\def\@rst #1 #2other{#1}
\newcommand\MR[1]{\relax\ifhmode\unskip\spacefactor3000 \space\fi
  \MRhref{\expandafter\@rst #1 other}{#1}}
\newcommand{\MRhref}[2]{\href{http://www.ams.org/mathscinet-getitem?mr=#1}{MR#2}}
\def\MR#1{\href{http://www.ams.org/mathscinet-getitem?mr=#1}{MR#1}}
\newcommand{\C}{\mathbbm{C}}
\newcommand{\E}{\mathbbm{E}}
\newcommand{\N}{\mathbbm{N}}
\newcommand{\R}{\mathbbm{R}}
\renewcommand{\P}{\mathbbm{P}}
\newcommand{\eps}{\varepsilon}
\newcommand{\1}{\mathbf{1}}
\def\cY{\mathcal{Y}}
\def\cX{\mathcal{X}}
\def\cL{\mathcal{L}}
\def\cB{\mathcal{B}}
\def\cA{\mathcal{A}}
\newcommand{\aryb}{\begin{eqnarray*}}
\newcommand{\arye}{\end{eqnarray*}}
\def\alb#1\ale{\begin{align*}#1\end{align*}}
\newcommand{\eqb}{\begin{equation}}
\newcommand{\eqe}{\end{equation}}
\newcommand{\eqbn}{\begin{equation*}}
\newcommand{\eqen}{\end{equation*}}
\newcommand{\rln}{\sqrt{\log n}}
\newcommand{\BB}{\mathbbm}
\newcommand{\frk}{\mathfrak}
\newcommand{\ep}{\epsilon}
\newcommand{\wt}{\widetilde}
\newcommand{\wh}{\widehat}
\newcommand{\mcl}{\mathcal}
\DeclareMathAlphabet{\mathpzc}{OT1}{pzc}{m}{it}
 \numberwithin{dummy}{section}
\def\({\left(}
\def\){\right)}
\def\R{\mathbf{R}}
\def\C{\mathbf{C}}
\def\S{\mathbf{S}}
\def\E{\mathbf{E}}
\def\P{\mathbf{P}}
\def\1{\mathbf{1}}
\DeclareMathOperator{\trace}{Tr}
\DeclareMathOperator{\vecdiv}{div}
\newcommand{\pd}[2]{\frac{\partial #1}{\partial #2}}
\renewcommand{\bar}[1]{\overline{#1}}
\newcommand{\ceil}[1]{\left\lceil #1 \right\rceil}
\def\L{\mathcal{L}}
\begin{document}

\author{
\begin{tabular}{c}Nina Holden\end{tabular}\;
\begin{tabular}{c}Yuval Peres\end{tabular}\;
\begin{tabular}{c}Alex Zhai\end{tabular}}

\title{Gravitational allocation for uniform points on the sphere}
\date{}

\maketitle

\begin{abstract}
Given a collection $\mcl L$ of $n$ points on a sphere $\S^2_n$ of
surface area $n$, a fair allocation is a partition of the sphere into
$n$ cells each of area $1$, and each associated with a distinct point
of $\mcl L$. We show that if the $n$ points are chosen uniformly at
random and the partition is defined by considering a ``gravitational''
potential defined by the $n$ points, then the expected distance
between a point on the sphere and the associated point of $\mcl L$ is
$O(\sqrt{\log n})$. We use our result to define a matching between two
collections of $n$ independent and uniform points on the sphere and
prove that the expected distance between a pair of matched points is
$O(\sqrt{\log n})$, which is optimal by a result of Ajtai, Koml\'os,
and Tusn\'ady. Furthermore, we prove that the expected number of
maxima for the gravitational potential is $\Theta(n/\log n)$. We also
study gravitational allocation on the sphere to the zero set $\cL$ of
a particular Gaussian polynomial, and we quantify the repulsion
between the points of $\cL$ by proving that the expected distance
between a point on the sphere and the associated point of $\cL$ is
$O(1)$.
\end{abstract}

\begin{figure}[h]
	\begin{center}
		\includegraphics[scale=0.78]{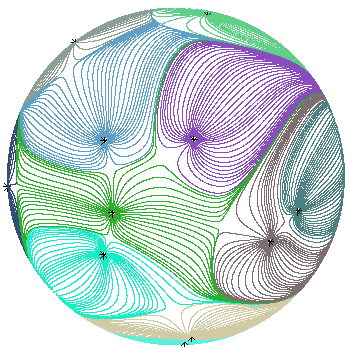}\qquad
		\includegraphics[scale=0.78]{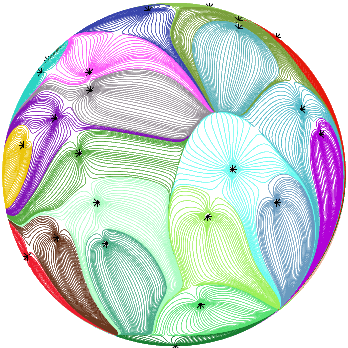}
	\end{center}
	\caption{Gravitational allocation to $n$ uniform and independent points on a sphere with $n=15$ and $n=40$. The basin of attraction of each point has equal area.}
	\label{fig1}
\end{figure}

\newpage

\tableofcontents

\section{Introduction}
Let $n$ be a positive integer, and let $\S^2_n \subset \R^3$ be the sphere centered at the origin with radius chosen such that with $\lambda_n$ denoting surface area we have $\lambda_n(\S^2_n)=n$. For any set $\L \subset \S^2_n$ consisting of $n$ points, we say that a measurable function $\psi: \S^2_n \to \L \cup \{\infty \}$ is a \emph{fair allocation} of $\lambda_n$ to $\cL$ if it satisfies the following:
\begin{equation}
\lambda_n(\psi^{-1}(\infty)) = 0, \qquad\qquad
\lambda_n(\psi^{-1}(z)) = 1, \quad\forall z\in \L.
\end{equation}
For $z\in\L$ we call $\psi^{-1}(z)$ the \emph{cell} allocated to
$z$. In other words, a fair allocation is a way to divide $\S^2_n$
into $n$ cells of measure 1 (up to a set of measure 0), with each cell
associated to a distinct point of $\L$.

Let $\L$ be a random collection of $n$ points on $\S^2_n$ which is
invariant in law under rotations of the sphere, i.e., $\phi(\L)$ has
the same law as $\L$ for any rotation $\phi:\S^2_n\to\S^2_n$. An
\emph{allocation rule} is a measurable map $\L \mapsto \psi_{\L}$
which is defined almost surely with respect to the randomness of $\L$,
such that (i) $\psi_{\L}$ is a fair allocation of $\lambda_n$ to $\L$,
and (ii) the map $\L\mapsto\psi_{\L}$ is rotation-equivariant. The
latter property means that for any rotation $\phi$ and almost every $x
\in \S^2_n$, we have $\psi_{\phi(\L)}(\phi(x))=\phi(\psi_{\L}(x))$.

\emph{Gravitational allocation} is a particular allocation rule based
on treating points in $\L$ as wells of a potential function. The cell
allocated to $z \in \L$ is then taken to be the basin of attraction of
$z$ with respect to the flow induced by the negative gradient of this
potential. When the potential takes a particular form which mimics the
gravitational potential of Newtonian mechanics, it is ensured that
a.s.\ each cell has area $1$. In this paper we will mainly consider
gravitational allocation on the sphere for the case when $\L$ is a set
of $n$ points chosen uniformly and independently at random from
$\S^2_n$.

Let us now define gravitational allocation precisely. Consider the
potential $U: \S_n^2\to\R$ given by
\begin{equation}
U(x) = \sum_{z\in\L}\log|x-z|,
\end{equation}
where $|\cdot|$ denotes Euclidean distance in $\R^3$. For each
location $x \in \S^2_n$, let $F(x)$ denote the negative gradient of
$U$ with respect to the usual spherical metric (i.e., the one induced
from $\R^3$). Note that $F(x)$ is an element of the tangent space at
$x \in \S^2_n$, and we think of it as describing the ``force'' on $x$
arising from the potential $U$.

For any $x\in\S^2_n$ consider the integral curve $Y_x(t)$ defined by
\begin{equation}
\frac{dY_x}{dt}(t) = F(Y_x(t)),\qquad Y_x(0)=x.
\end{equation}
Since $F$ is smooth away from $\L$, by standard results about flows on
vector fields (see e.g.\ the proof of Lemma 17.10 in \cite{lee}), for
each fixed $x\in\S^2_n$ the curve $Y_x$ can be defined over some
maximal domain $(-\infty, \tau_x)$, where $0 < \tau_x \le
\infty$. Note that the force $F$ represents the speed of a particle,
rather than being proportional to its acceleration as in Newtonian
gravitation.

\begin{figure}
	\begin{center}
		\includegraphics[scale=0.75]{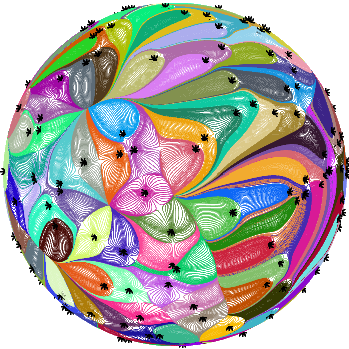}\,\,\,\,\,\,\,\,
		\includegraphics[scale=0.75]{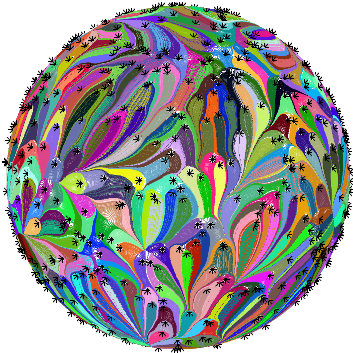}
	\end{center}
	\caption{Gravitational allocation to $n$ uniform and independent points with $n=200$ and $n=750$ (see Figure \ref{fig1} for smaller $n$). The basins become more elongated as $n$ grows, reflecting Theorem \ref{thm:finiteexpectation}. The MATLAB script used to generated these figures is based on code written by M. Krishnapur.}
	\label{fig3}
\end{figure}

\begin{figure}
	\begin{center}
		\includegraphics[scale=0.7]{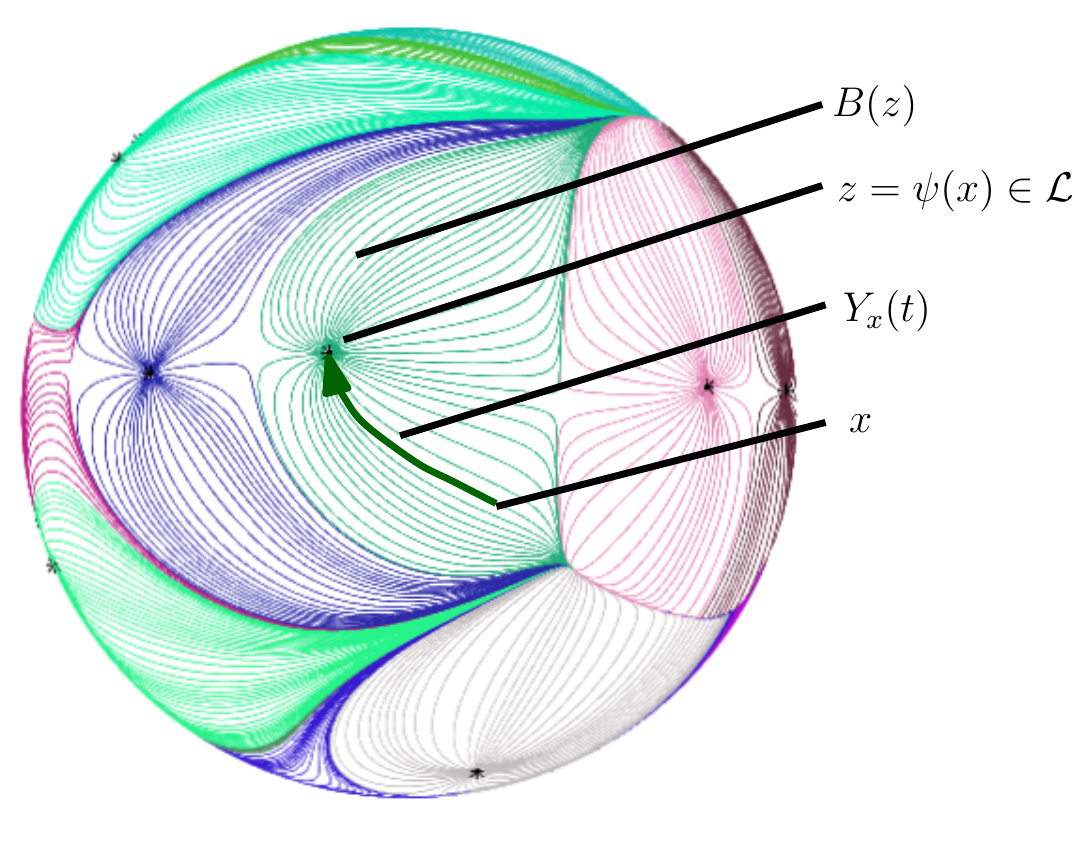}
		\includegraphics[scale=0.3]{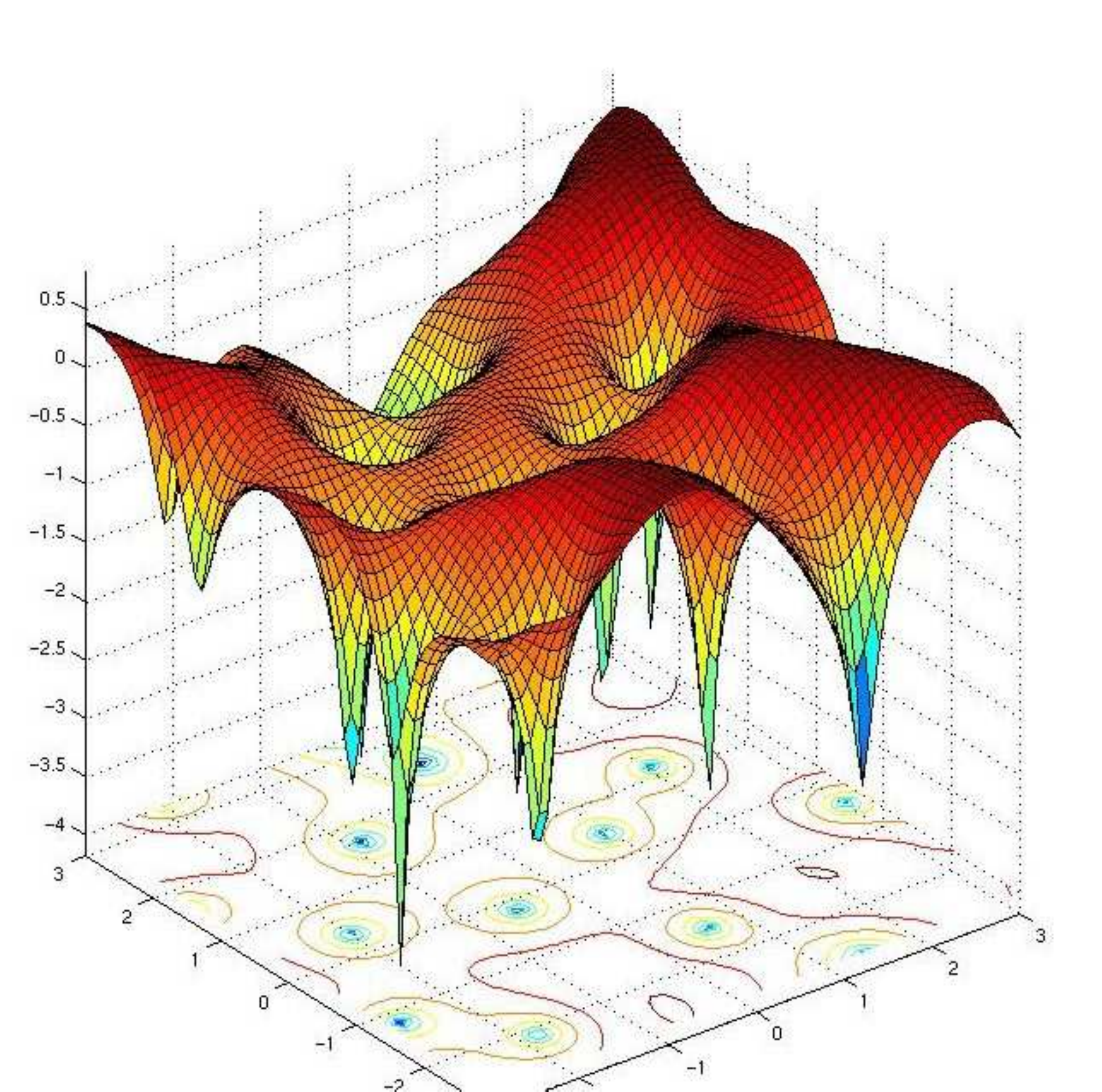}
	\end{center}
	\caption{Left: Illustration of $Y_x$, $B(z)$, and $\psi(x)$ for $x\in\S^2_n$ and $z\in\cL$. Right: Gravitational potential, by M.\ Krishnapur.}
\end{figure}

We then define \emph{gravitational allocation on the sphere} to be the
allocation rule given by
\begin{equation}
  \psi(x) = \begin{cases}
	z &\qquad\text{if\,\,} \lim_{t \uparrow \tau_x}Y_x(t)=z\text{\,\,\,and\,\,\,}z\in\L,\\
	\infty &\qquad\text{otherwise.}
  \end{cases}
  \label{eq1}
\end{equation}
For $z \in \L$, the set
\begin{equation}
  B(z) = \left\{ x\in\S^2_n\,:\,\psi(x)=z \right\}
\end{equation}
of points allocated to $z$ will be called its \emph{basin of
  attraction}.

It turns out, as stated in the following proposition, that each basin
of attraction almost surely has unit area, so that \eqref{eq1} indeed
gives rise to a fair allocation.
\begin{prop} \label{prop:allocation}
  For $n\in\N$ let $\S^2_n$ be the sphere centered at the origin with
  surface area $n$, and let $\L\subset\S^2_n$ be a set of $n$ distinct
  points. The function $\psi$ given by \eqref{eq1} defines a fair
  allocation of $\lambda_n$ to $\L$.
\end{prop}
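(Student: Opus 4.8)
The plan is to verify the two defining properties of a fair allocation: that $\lambda_n(\psi^{-1}(\infty)) = 0$, and that $\lambda_n(B(z)) = 1$ for every $z \in \L$. The key analytic input is the divergence of the vector field $F$. On $\S^2_n$, the function $x \mapsto \log|x-z|$ is (up to a multiplicative constant) the Green's function for the Laplace--Beltrami operator with a pole at $z$, so that $\Delta_{\S^2_n} \log|x-z| = c - 2\pi\, \delta_z$ for an appropriate constant $c$ determined by the requirement that the Laplacian integrate to zero over the sphere; summing over the $n$ points of $\L$ and using $\lambda_n(\S^2_n) = n$, one finds $\Delta_{\S^2_n} U = -2\pi\sum_{z\in\L}\delta_z + 2\pi$ away from no special normalization issue. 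Hence $\op{div} F = -\Delta_{\S^2_n} U = -2\pi$ on $\S^2_n \setminus \L$, i.e. the flow of $F$ is volume-contracting at a constant exponential rate: by the transport/Liouville formula, if $\Phi_t$ denotes the (partially defined) time-$t$ flow map, then $\frac{d}{dt}\lambda_n(\Phi_t(A)) = \int_{\Phi_t(A)} \op{div} F\, d\lambda_n = -2\pi\,\lambda_n(\Phi_t(A))$ for nice sets $A$ on which the flow is defined, so $\lambda_n(\Phi_t(A)) = e^{-2\pi t}\lambda_n(A)$.

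Next I would analyze the local picture near each $z \in \L$. Near $z$ the dominant term of $U$ is $\log|x-z|$, whose negative gradient points radially inward toward $z$ with magnitude $\sim 1/r$ in geodesic distance $r$; the remaining terms $\sum_{w \ne z}\log|x-w|$ are smooth near $z$ and contribute a bounded perturbation. A standard linearization/comparison argument then shows that $z$ is a sink: there is a neighborhood $N(z)$ of $z$ such that every integral curve entering $N(z)$ converges to $z$ as $t \uparrow \tau_x$, and moreover the curve reaches $z$ — this is where one checks that $\tau_x < \infty$ precisely for points flowing into some $z$, since the $1/r$ blow-up of the force drives the particle into $z$ in finite time (or, if one prefers, $\tau_x = \infty$ but $Y_x(t) \to z$; either way $\psi(x) = z$). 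Combined with the contraction estimate, this gives $\lambda_n(B(z)) > 0$ for each $z$, because $B(z) \supseteq \Phi_{-t}(N(z))$ for all $t$ and these have positive measure; in fact $B(z) = \bigcup_{t \ge 0}\Phi_{-t}(N(z) \cap B(z))$ up to measure zero.

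To pin down the exact value $\lambda_n(B(z)) = 1$, I would argue as follows. The sphere decomposes, up to a $\lambda_n$-null set, into the basins $B(z)$, $z \in \L$, together with the set $\psi^{-1}(\infty)$; the null-ness of the exceptional set (points whose forward orbit does not converge to any $z$ — e.g. orbits converging to a saddle/maximum of $U$, or orbits that are "captured" by the critical set) follows because the union of stable manifolds of the non-sink critical points has dimension at most $1$, hence measure zero, and one rules out other pathologies using that $F$ is real-analytic away from $\L$. Now fix $z$ and let $N_\epsilon(z)$ be the geodesic ball of radius $\epsilon$; for small $\epsilon$ this ball is forward-invariant (the force points inward on $\partial N_\epsilon(z)$) and contained in $B(z)$. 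The set $B(z) \setminus N_\epsilon(z)$ flows into $N_\epsilon(z)$: there is a first hitting time, and the flow map at that time is a measure-contracting bijection from $B(z)\setminus N_\epsilon(z)$ onto (part of) $N_\epsilon(z)$. Applying the exponential contraction formula and carefully accounting for the flux of area through $\partial N_\epsilon(z)$ per unit time — which by the divergence theorem equals $2\pi\,\lambda_n(N_\epsilon(z)) + o(1)$ balanced against what flows in — forces $\lambda_n(B(z))$ to be independent of $z$; by rotation-invariance of the construction (or simply by symmetry of the role of each point) all the $\lambda_n(B(z))$ are determined by a single conservation law $\sum_z \lambda_n(B(z)) = n$ together with the fact that the contraction rate $2\pi$ is the same at every sink, yielding $\lambda_n(B(z)) = 1$.

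The main obstacle I anticipate is the rigorous justification that $\psi$ is defined $\lambda_n$-almost everywhere and that the exceptional set is null — i.e., controlling the global dynamics of the flow (convergence of orbits, structure of the critical set of $U$, finiteness of hitting times) rather than the local or measure-theoretic bookkeeping. The cleanest route is probably to invoke that $U$ is real-analytic on $\S^2_n \setminus \L$ so its critical points are isolated (generically nondegenerate), apply the stable manifold theorem to see that the set of points not flowing into a sink is a countable union of lower-dimensional manifolds, and separately show via the local analysis near each $z \in \L$ that once an orbit enters the "capture zone" $N(z)$ it is absorbed. Everything else is then a matter of combining the Liouville exponential-contraction identity with a flux computation, which is routine.
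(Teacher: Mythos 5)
The decisive gap is in your final step. The proposition is a deterministic statement for an \emph{arbitrary} fixed set $\L$ of $n$ distinct points, so there is no symmetry or rotation-invariance available to argue that the basin areas $\lambda_n(B(z))$ are all equal and hence each equal to $1$ after invoking $\sum_z \lambda_n(B(z)) = n$; different points of $\L$ sit in genuinely different environments. (Even in the random setting, exchangeability would only give equal \emph{expected} areas, not the almost-sure statement $\lambda_n(B(z))=1$.) Your flux bookkeeping, which is the only other candidate for pinning down the value, is both vague and numerically wrong as stated: the flux of $F$ through $\partial N_\eps(z)$ is dominated by the $1/r$ singularity and tends to $2\pi$ as $\eps \to 0$, not to $2\pi\,\lambda_n(N_\eps(z)) = o(1)$. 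The paper closes exactly this gap with a purely local argument, one basin at a time: Liouville's theorem gives $V_t = e^{-2\pi t}V_0$ for $V_t = \lambda_n(\{x \in B(z) : \tau_x > t\})$, so $V_0 - V_\eps = 2\pi V_0\,\eps + O(\eps^2)$; separately, since $F(x) = \frac{z-x}{|z-x|^2} + O(1)$ near $z$, one computes $\frac{d}{dt}|Y_x(t)-z|^2 = -2 + O(|Y_x(t)-z|)$, so the set absorbed by time $\eps$ is squeezed between spherical caps of radius $\sqrt{2\eps} \pm o(\sqrt{\eps})$ and has area $2\pi\eps + o(\eps)$. Equating the two expansions forces $V_0 = 1$ for each $z$ individually, with no symmetry and no global input.

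Two further points. First, you have a sign error at the start: $\Delta_S \log|x-z| = 2\pi\delta_z - \frac{2\pi}{n}$ (the logarithm is subharmonic with a \emph{positive} point mass), so $\vecdiv F = -\Delta_S U = +2\pi$ off $\L$; the forward flow \emph{expands} area away from $\L$, and the exponential factor $e^{-2\pi t}$ describes the decay of the measure of points not yet absorbed by the sink, not contraction of the flow map. Any flux/contraction bookkeeping built on $\vecdiv F = -2\pi$ would come out wrong. Second, the step you flag as the main obstacle --- proving $\lambda_n(\psi^{-1}(\infty)) = 0$ via real-analyticity, isolated critical points, and stable manifolds --- is unnecessary: once each of the $n$ disjoint basins is shown to have area exactly $1$, they exhaust the total area $n$ and the exceptional set is automatically null. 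This is precisely how the paper sidesteps the global dynamics of the flow, whereas your outline makes that analysis load-bearing while leaving the actual determination of the constant $1$ unsupported.
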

The proof of Proposition \ref{prop:allocation} is given in Section
\ref{sec:fairallocation}. We are now ready to state the main results
of this paper.

\subsection{Statement of main results}

Our first main result estimates the average distance between a point
$x$ and the point $\psi(x)$ it is allocated to.

\begin{thm} \label{thm:finiteexpectation}
  Let $n\in\{2,3,\dots \}$. Consider any $x \in \S^2_n$, and let
  $\cL\subset \S^2_n$ be a collection of $n$ points chosen uniformly
  and independently at random from $\S^2_n$. For any $p>0$ there is a
  constant $C>0$ depending only on $p$ such that for $r>0$,
  \begin{equation} \label{eq16}
	\P\left[|\psi(x) - x|>r\sqrt{{\log n}}\right] \leq C r^{-p}.
  \end{equation}
  In particular, for some universal constant $C>0$,
  \begin{equation} \label{eq:ga-expectation-bound}
	\E[|\psi(x) - x|]\leq C \sqrt{\log n}.
  \end{equation}
\end{thm}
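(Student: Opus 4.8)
The plan is to control the tail of $|\psi(x)-x|$ by showing that, with overwhelming probability, the flow line $Y_x$ starting at $x$ cannot travel a distance much larger than $\sqrt{\log n}$ before terminating at a point of $\cL$. The central quantity to estimate is the \emph{force} $F(y) = -\nabla U(y)$ along the trajectory. Heuristically, at a point $y$ the force decomposes into a contribution from the $\sim\pi\rho^2$ points of $\cL$ within distance $\rho$ of $y$ (which, by the near-cancellation coming from approximate rotational symmetry of a Poisson-like configuration, contributes a fluctuating term of order comparable to the square root of the number of points, i.e.\ of order $\rho$) and a ``drift'' term coming from the deterministic average potential, which on $\S^2_n$ pushes points in a way that is, on average, confining. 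The key point is a lower bound: one expects that along $Y_x$ the force has magnitude bounded below by a constant as long as $Y_x$ stays at macroscopic distance from its eventual limit point, so that the time — and hence the length — spent far from the target is short. Concretely, I would aim to prove a statement of the form: for each $y\in\S^2_n$ and each $r>0$, the probability that there is a point $z$ of $\cL$ with $|z-y|\le r$ but such that the flow line through $y$ does \emph{not} terminate within distance $O(r)$ is at most $C r^{-p}$ for every $p$, by a union bound over a net of scales and over a $\frac1n$-net of the sphere.

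The main technical steps, in order, are as follows. First, establish a pointwise estimate on the force: for fixed $y$ at (rescaled) distance of order $1$ from the nearest point of $\cL$, show that $\P[\,|F(y)| < \delta\,]$ is small, uniformly in $y$, with quantitative dependence on $\delta$. This rests on the fact that $U(y) = \sum_{z\in\cL}\log|y-z|$ is a sum of independent contributions, so $\nabla U(y)$ is a sum of independent vector-valued terms; one controls its lower tail by a small-ball / anticoncentration argument (e.g.\ via the characteristic function or a Paley–Zygmund-type bound using the second and fourth moments of $\nabla U(y)$, noting that $\E|\nabla U(y)|^2$ is of order $1$ after rescaling and does not degenerate). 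Second, upgrade this to a statement along the whole trajectory: discretize time, use that $F$ is Lipschitz away from $\cL$ together with the fact that the speed $|Y_x'| = |F|$ is bounded above on the relevant region (again a tail estimate, now on the \emph{upper} tail of $|F|$, which follows from moment bounds on $\nabla\log|y-z|$), and conclude that if the trajectory had length $L\sqrt{\log n}$ it would have to pass through many ``slow'' regions, each of which is individually unlikely. Third, handle the final approach to the target point $z=\psi(x)$: near $z$ the dominant term is $\nabla\log|y-z|$, which points toward $z$ and has magnitude $\gtrsim |y-z|^{-1}$, so once $Y_x$ enters a small neighborhood of $z$ it is quickly absorbed and contributes negligibly to the total length. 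Fourth, assemble these with a union bound over a $\tfrac1n$-net $\cN$ of $\S^2_n$: by rotational invariance it suffices to bound the bad event for each net point, and by Lipschitz continuity of the flow (on the event that no net point is anomalously close to $\cL$, which is itself a rare event) the bound at net points transfers to all of $\S^2_n$. Integrating the tail \eqref{eq16} over $r$ then gives \eqref{eq:ga-expectation-bound}.

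The hardest step I expect is the second one: converting the \emph{pointwise} lower bound on $|F|$ into a bound that holds \emph{uniformly along the random trajectory} $Y_x$. The difficulty is that the trajectory is itself a complicated functional of $\cL$, so one cannot naively union-bound $\P[|F(Y_x(t))|<\delta]$ over $t$ — the event $\{Y_x(t)=y\}$ is correlated with the event $\{|F(y)|<\delta\}$. The standard remedy is to decouple: cover the sphere by $O(n)$ boxes of unit (rescaled) side length, establish the force lower bound \emph{simultaneously for every box} (i.e.\ bound $\P[\exists\,y\in Q : |F(y)|<\delta]$ for a fixed box $Q$, using the Lipschitz bound on $F$ to reduce to finitely many test points, then union over the $O(n)$ boxes), and then note that \emph{any} trajectory of length $L\sqrt{\log n}$ must traverse at least $\gtrsim L\sqrt{\log n}$ distinct unit boxes, in each of which $|F|\ge\delta$, so the time to cross is at most $\delta^{-1}$ times the diameter and the total time is $O(L\sqrt{\log n}/\delta)$ — but then one needs a matching statement that the trajectory is eventually absorbed, which forces a contradiction with $L$ large once $\delta$ is chosen appropriately (possibly scale-dependent, $\delta\sim$ const). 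Making the quantitative dependence on $r$ strong enough to beat the $O(n)$ union bound — i.e.\ getting $\P[\exists y\in Q: |F(y)|<\delta]$ to be $o(1/n)$ for the relevant $\delta$ — is where the real work lies, and is presumably handled by a careful large-deviations estimate on $\nabla U$ that exploits the independence of the $n$ summands.
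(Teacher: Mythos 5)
There is a genuine gap, and it is structural: your argument tries to bound the travel distance using a \emph{lower} bound on the force along the trajectory, but that inequality runs in the wrong direction. Since the speed of the flow is $|F(Y_x(t))|$, the length of the trajectory is $\int_0^{\tau_x}|F(Y_x(t))|\,dt$; a lower bound on $|F|$ only bounds the \emph{time} needed to cross a region of given size, and gives no control on the length unless you separately bound the total duration $\tau_x$. You acknowledge needing ``a matching statement that the trajectory is eventually absorbed,'' but supply no mechanism for it, and this is precisely the heart of the matter. The paper's proof rests on two bounds you do not have: (i) an exponential tail for the duration $\tau_x$, which comes from Liouville's theorem together with the fact that $\Delta_S U\equiv 2\pi$ off $\cL$ (Proposition \ref{prop:Delta-u} and Lemma \ref{lem:allocation-helper}: the flow contracts area at constant rate $2\pi$, so $\P[\tau_x>t]=e^{-2\pi t}$ for a uniform starting point), and (ii) an \emph{upper} tail bound of order $\sqrt{\log n}$ for the force contributed by points of $\cL$ outside a ball of radius $\Theta(1/\sqrt{\log n})$ (Lemma \ref{lem:main-disk-bound}), combined with a ``swallowing'' argument (Lemma \ref{lem:large-force-is-swallowed}) showing that when $|F|$ is huge because of nearby points of $\cL$, the trajectory is absorbed within distance $O(1/\sqrt{\log n})$. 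Distance is then bounded as duration times speed. None of these ingredients appear in your outline.

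Moreover, your proposed ``hardest step'' cannot be carried out as described: you want $|F(y)|\ge\delta$ simultaneously on every unit box, but on every realization the potential $U$ has critical points (saddles, and in expectation $\Theta(n/\log n)$ local maxima by Theorem \ref{thm:maxima}), so there are always regions where $F$ vanishes; the event you plan to union-bound has probability zero. Quantitatively, the anticoncentration estimate $\P(|f(0)-y|<\eps)=O(\eps^2/\log n)$ (the paper's Lemma \ref{lem:f-density}, which is used there to count local maxima, not to prove the distance bound) cannot be pushed to $o(1/n)$ for any fixed $\delta$, so the union bound over $O(n)$ boxes fails by a wide margin. The anticoncentration idea in your first step is thus aimed at the wrong theorem; to recover the distance bound you would need to replace the force lower bound by the duration tail from Liouville's theorem and an upper tail for $|F|$ away from $\cL$, which is essentially the paper's route.
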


Gravitational allocation is optimal in the sense that
\eqref{eq:ga-expectation-bound} cannot be improved by more than a
constant factor for other allocation rules for uniform and independent
points (see Remark \ref{rmk1}).

We remark that one may obtain the bound
\eqref{eq:ga-expectation-bound} (without the tail estimate
\eqref{eq16}) more directly via the following identity, which is of
independent interest and also applied in the proof of Proposition
\ref{prop:gaf} stated below.
\begin{equation} \label{eq:L^1-identity}
\frac{1}{n} \int_{\S^2_n} \int_0^{\tau_x} |F(Y_x(t))| \,dt
\,d\lambda_n(x) = \frac{1}{2\pi n} \int_{\S^2_n} |F(x)|
\,d\lambda_n(x).
\end{equation}
Taking the expectation over $\L$, the left side upper bounds the
average value of $|\psi(x) - x|$, while the right side can be shown to
be $O(\sqrt{\log n})$ using simpler versions of estimates carried out
in Section \ref{sec:finiteexpectation}. We give the short proof of
\eqref{eq:L^1-identity} in Section \ref{sec:fairallocation}.

Fair allocations are closely related to distance-minimizing perfect
matchings between sets of points. For example, we have the following
corollary of \eqref{eq:ga-expectation-bound}. See Section
\ref{sec:matchings} for two short proofs.
\begin{cor}
	For $n\in\{2,3,\dots \}$ consider two sets of $n$ points $\cA=\{a_1,\dots,a_n\}$ and $\cB=\{b_1,\dots,b_n\}$ sampled uniformly and independently at random
	from $\S^2_n$. We can define a matching $\varphi$ of $\mcl A$ and $\mcl
	B$ (i.e., a bijection $\varphi: \mcl A \to \mcl B$) using
	gravitational allocation, such that for some universal constant $C$,
	\[ \E\left[ \frac{1}{n} \sum_{k=1}^n |\varphi(a_k)-a_k|\right]\leq C\sqrt{\log n}. \]
	\label{prop2}
\end{cor}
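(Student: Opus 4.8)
The plan is to build the matching $\varphi$ by composing gravitational allocation maps with independent sampling. Fix two independent uniform samples $\cA=\{a_1,\dots,a_n\}$ and $\cB=\{b_1,\dots,b_n\}$. Apply gravitational allocation to the point set $\cA$ to obtain the fair allocation $\psi_{\cA}:\S^2_n\to\cA$, and independently apply gravitational allocation to $\cB$ to obtain $\psi_{\cB}:\S^2_n\to\cB$. Since each basin $B(a_k)$ has area exactly $1$ (Proposition \ref{prop:allocation}), restricting the uniform measure $\lambda_n$ to $B(a_k)$ gives a probability measure; the idea is to sample, for each $k$, a point $X_k$ uniformly from $B(a_k)$ (equivalently, sample a single uniform point $X$ on $\S^2_n$ and look at $\psi_{\cA}(X)$, which is uniform on $\cA$), and then set $\varphi(a_k)=\psi_{\cB}(X_k)$, i.e., transport through the common random point. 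One must check this is a.s.\ a bijection: because both allocations are fair, the pushforward of $\lambda_n$ under $\psi_{\cA}$ and under $\psi_{\cB}$ are both the uniform measure on $n$ points, so the induced map on the point sets is a.s.\ measure-preserving between two uniform distributions on $n$-element sets, hence a.s.\ a bijection. (A clean way to see this: sample $X^{(1)},\dots,X^{(m)}$ i.i.d.\ uniform and use a greedy/Hall-type argument, or simply note that the transport plan $(\psi_{\cA}(X),\psi_{\cB}(X))$ for $X\sim\lambda_n$ is, after conditioning, a coupling that is supported on a permutation matrix with probability $1$ by an exchangeability symmetrization argument.)

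The distance bound then follows from the triangle inequality: for the matched pair,
\[
|\varphi(a_k)-a_k| = |\psi_{\cB}(X_k)-a_k| \le |\psi_{\cB}(X_k)-X_k| + |X_k - a_k| = |\psi_{\cB}(X_k)-X_k| + |\psi_{\cA}(X_k)-X_k|.
\]
Averaging over $k$ and taking expectations, and using that $X_k$ (unconditionally, for a uniformly chosen index or equivalently for $X\sim\lambda_n$) is a uniform point on $\S^2_n$ independent of $\cB$ in the first term and independent of $\cA$ in the second term, each of the two expectations is exactly $\E[|\psi(x)-x|]$ for a single fixed $x$ and a single uniform point set. By Theorem \ref{thm:finiteexpectation}, each is at most $C\sqrt{\log n}$, so the average matched distance has expectation at most $2C\sqrt{\log n}$, giving the claim with a new universal constant.

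The main obstacle is the measurability/bijectivity bookkeeping rather than any hard estimate: one needs to argue carefully that the randomized transport through a common uniform point (or through i.i.d.\ uniform points) yields, almost surely, a genuine bijection $\cA\to\cB$ and not merely a measure-preserving correspondence. A robust way to handle this is to sample a large number $m$ of i.i.d.\ uniform points, note that for each pair $(a_j,b_k)$ the number of sampled points $X$ with $\psi_{\cA}(X)=a_j$ and $\psi_{\cB}(X)=b_k$ concentrates around $m\,\lambda_n(B(a_j)\cap B'(b_k))$, and then either (i) take $m\to\infty$ and use the doubly-stochastic matrix $(\lambda_n(B(a_j)\cap B'(b_k)))_{j,k}$ together with Birkhoff's theorem to extract a permutation, or (ii) more simply, observe that the "common point" coupling already defines a joint law on $\cA\times\cB$ whose marginals are both uniform, and invoke that any such coupling between two uniform distributions on $n$ points, when both allocation maps are a.s.\ injective off a null set (which is immediate since the cells partition $\S^2_n$), is supported on bijections. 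The alternative proof promised in Section \ref{sec:matchings} presumably formalizes one of these two routes, and either is routine; all the analytic content is already contained in Theorem \ref{thm:finiteexpectation}.
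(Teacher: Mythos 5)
The analytic core of your argument---couple $(\psi_{\cA}(X),\psi_{\cB}(X))$ through a common uniform point $X$, apply the triangle inequality, and invoke Theorem \ref{thm:finiteexpectation} twice to get expected coupling distance at most $2C\sqrt{\log n}$---is exactly the paper's second proof, and that part is fine. The gap is in your primary claim that this transport is almost surely a bijection. It is not: given $\cA$, $\cB$ and the two allocations, the joint law of $(\psi_{\cA}(X),\psi_{\cB}(X))$ assigns mass $\frac{1}{n}\lambda_n\bigl(\psi_{\cA}^{-1}(a_j)\cap\psi_{\cB}^{-1}(b_k)\bigr)$ to the pair $(a_j,b_k)$, and generically each $\cA$-cell meets several $\cB$-cells in sets of positive area, so the conditional law of $b$ given $a=a_j$ is not a point mass. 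Equivalently, the normalized overlap matrix is doubly stochastic but almost surely not a permutation matrix. In particular, sampling an independent $X_k$ in each cell $\psi_{\cA}^{-1}(a_k)$ and setting $\varphi(a_k)=\psi_{\cB}(X_k)$ can send two distinct $a$'s to the same $b$ with positive probability; fairness of the two allocations only controls the marginals of the coupling, not its support, and no exchangeability or ``measure-preserving, hence bijective'' argument (your route (ii)) can rescue this.

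The correct repair is the one you sketch as route (i), and it is precisely what the paper does: the matrix $\bigl(\lambda_n(\psi_{\cA}^{-1}(a_j)\cap\psi_{\cB}^{-1}(b_k))\bigr)_{j,k}$ is doubly stochastic, so by the Birkhoff--von Neumann theorem it is a convex combination of permutation matrices; since the expected coupling distance is the corresponding convex combination of the average matching distances of those permutations, at least one permutation $\sigma$ satisfies $\frac{1}{n}\sum_k |a_k-b_{\sigma(k)}|$ bounded by the coupling cost, i.e.\ by $2C\sqrt{\log n}$ in expectation. No $m\to\infty$ sampling scheme is needed. (For comparison, the paper also gives a genuinely different first proof: an online scheme that sets $\varphi(a_1)=\psi_{\cB}(a_1)$, uses that $\psi_{\cB}(a_1)$ is uniform on $\cB$, removes the matched pair, and recurses on $n-1$ points after rescaling the sphere; that version produces an explicit, online matching rather than a mere existence statement.)
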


The next theorem shows that the expected number of local maxima of the
potential $U$ is of order $\frac{n}{\log n}$. The theorem addresses a
question of Nazarov, Sodin, and Volberg \cite[Question 12.6]{nsv07},
who, in the context of gravitational allocation to the zero set of a
Gaussian analytic function, ask about properties of the graph whose
vertices are maxima for the potential $U$ and whose edges formed by
allocation cell boundaries.
\begin{theorem}\label{thm:maxima}
  If $N \in \N \cup \{\infty\}$ denotes the number of local maxima of
  $U$, then for some universal constant $C > 0$ we have
  \[ \frac{n}{C\log n} \le \E[N] \le \frac{Cn}{\log n}. \]
\end{theorem}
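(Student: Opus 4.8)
The plan is to estimate $\E[N]$ via an integral-geometric or averaging argument that counts critical points of $U$ by weighting. The key observation is that $F = -\nabla U$ is a smooth vector field away from $\cL$, and its local maxima are among its zeros; a local maximum of $U$ is a zero of $F$ at which the Hessian of $U$ is negative definite. Since $U$ is (up to an additive constant) harmonic off $\cL$ — indeed $\Delta U$ equals a constant minus point masses at $\cL$, reflecting the fact that each cell has unit area — the index of $F$ at a nondegenerate zero is $+1$ at a maximum and $-1$ at a saddle (there are no local minima away from $\cL$ because near each $z \in \cL$ the potential $U \to -\infty$). A Poincar\'e--Hopf count on $\S^2_n$ then gives (number of maxima) $-$ (number of saddles) $+ n \cdot (\text{index at wells}) = \chi(\S^2) = 2$; the index at each well $z\in\cL$ is $+1$ since $U$ looks like $\log|x-z|$ there, so (maxima) $=$ (saddles) $+ 2 - n$. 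This is consistent but not by itself an upper bound; the real content is a quantitative first-moment computation.

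For the upper bound $\E[N] \le Cn/\log n$, I would pass to the \emph{vortex field}, i.e., the field $F$ rotated by $90^\circ$ in each tangent plane; its flow lines are the level curves of $U$, and the maxima of $U$ are exactly the centers of the small ``islands'' around which these level curves close up. Following the strategy of Nazarov--Sodin--Volberg, one shows that near a local maximum the typical scale on which $U$ decays by a constant is of order $\sqrt{\log n}$ (this is exactly the scale appearing in Theorem \ref{thm:finiteexpectation}: a flow line must typically travel distance $\sqrt{\log n}$ to reach its well, so the ``gradient flow cells'' — hence also the level-set islands around maxima — have linear size $\gtrsim \sqrt{\log n}$ and area $\gtrsim \log n$). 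Partitioning the sphere (area $n$) into such regions yields at most $O(n/\log n)$ maxima. More precisely, I would fix a small $\delta>0$, show via Theorem \ref{thm:finiteexpectation} and the identity \eqref{eq:L^1-identity} that with high probability the basin $B(z)$ of each well, and likewise a neighborhood of each maximum on which $U$ is within $\delta$ of its maximal value, contains a geodesic ball of radius $c\delta\sqrt{\log n}$, and then use a packing bound.

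For the lower bound $\E[N] \ge n/(C\log n)$, I would produce many maxima by a local-event argument: tile $\S^2_n$ into $\asymp n/\log n$ patches of area $\asymp \log n$ (diameter $\asymp \sqrt{\log n}$), and show that with probability bounded below, independently of the others up to the usual finite-range heuristics, a given patch ``captures'' a local maximum of $U$ — e.g.\ by showing $U$ restricted to the boundary of a suitable sub-patch is strictly smaller than its value somewhere in the interior, which forces an interior maximum. The needed probabilistic input is that the configuration of $\cL$ inside and near the patch behaves like a Poisson process of intensity $1$, so the relevant event (roughly: a local ``deficiency'' of points creating a bump of $U$) has positive probability uniformly in $n$; a second-moment or Paley--Zygmund argument over the $\asymp n/\log n$ patches then gives $\E[N] \gtrsim n/\log n$.

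The main obstacle is the upper bound: turning the heuristic ``each maximum occupies area $\gtrsim \log n$'' into a rigorous packing statement requires controlling how close two distinct maxima of $U$ can be, which does not follow formally from Theorem \ref{thm:finiteexpectation} (that theorem controls the flow from a \emph{typical} point, not the geometry near \emph{every} critical point). I expect one must instead bound $\E[N]$ directly, e.g.\ by a Kac--Rice-type integral $\E[N] = \int_{\S^2_n} \E\big[\,|\det \mathrm{Hess}\,U(x)|\,\1_{\nabla U(x)=0,\ \mathrm{Hess}\,U(x)<0}\,\big|\,\nabla U(x)=0\big]\, p_{\nabla U(x)}(0)\, d\lambda_n(x)$, and show the integrand is $O(1/\log n)$ pointwise. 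Estimating the density of $\nabla U$ at $0$ and the conditional Hessian moment requires understanding the joint law of the first and second derivatives of $U = \sum_{z\in\cL}\log|x-z|$, which is where the bulk of the technical work — comparison with a Gaussian field at the scale $\sqrt{\log n}$, as in the proof of Theorem \ref{thm:finiteexpectation} — will go.
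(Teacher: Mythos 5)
Your write-up is a plan rather than a proof, and the plan leaves out exactly the quantitative ingredient that carries the whole theorem. Both of your routes for the upper bound (the packing heuristic, which you yourself concede cannot be made rigorous from Theorem \ref{thm:finiteexpectation}, and the Kac--Rice integral) reduce to one estimate: that the density of the force at any prescribed value is $O(1/\log n)$, i.e.\ $\P\(|F(x)-y|<\eps\) = O(\eps^2/\log n)$ uniformly in $y$. You flag this as ``where the bulk of the technical work will go'' but give no mechanism for it; ``comparison with a Gaussian field at scale $\sqrt{\log n}$'' is not something the proof of Theorem \ref{thm:finiteexpectation} provides (that proof uses exponential-moment bounds, not Gaussian comparison), and the force is not a Gaussian field. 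The paper's proof gets this estimate from an exact structural fact (Lemma \ref{lem:gaussian-mixture}): after stereographic projection, the contribution $\sqrt{n}f(0,z)$ of a single uniform point is a scale mixture of centered two-dimensional Gaussians, so $f(0)$ is a centered Gaussian with random variance $\frac1n\sum_i V_i$, and controlling $\E\bigl[(\frac1n\sum_i V_i)^{-1}\bigr]=O(1/\log n)$ (Lemma \ref{lem:sum-V}) gives the density bound. Without this (or some substitute), neither of your upper-bound routes produces the factor $1/\log n$. You also miss the observation that makes the counting painless once the density bound is in hand: at a local maximum the Hessian of $U$ is negative semidefinite with trace identically $2\pi$, hence has operator norm at most $2\pi$; this both bounds the conditional Hessian determinant in any Kac--Rice formula and, in the paper's discretized version, shows that if an $\eps$-cell contains a maximum (and the Hessian varies little on the cell) then $|F|=O(\eps)$ at the cell's center, so the small-ball bound applies directly and one avoids Kac--Rice technicalities altogether.

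The lower bound has the same kind of gap. You tile the sphere into $\asymp n/\log n$ patches of diameter $\asymp\sqrt{\log n}$ and assert that each contains a local maximum with probability bounded below ``uniformly in $n$,'' justified only by an appeal to Poisson-like behavior and ``finite-range heuristics.'' But $U$ is a long-range functional of all $n$ points, the fluctuations of $U$ across such a patch are of order $\log n$, and nothing in your sketch shows that the boundary-versus-interior comparison event has probability bounded away from $0$ uniformly in $n$; this is precisely the hard part. (Also, a first moment suffices here, so Paley--Zygmund is beside the point.) The paper instead works at infinitesimal scale: it shows $\P\(|f(0)|<\eps \text{ and } \nabla^2 u(0)\preceq -\tfrac15 I_2\)=\Omega(\eps^2/\log n)$ (the second half of Lemma \ref{lem:f-density}, which needs the Gaussian-mixture representation plus conditional covariance estimates), and then a topological-degree/homotopy argument shows that these two conditions at the center of an $\eps$-cell force a genuine local maximum inside the cell; summing over $\Theta(n\eps^{-2})$ cells gives $\Omega(n/\log n)$. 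Neither the probabilistic estimate nor the deterministic ``small force plus negative-definite Hessian implies a nearby maximum'' step appears in your proposal, so as it stands both inequalities of the theorem remain unproved.
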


As a corollary to Theorem \ref{thm:maxima} we can deduce that the
\emph{typical basin diameter} is at least of order $\sqrt{\log
  n}$.
\begin{corollary} \label{cor:diameter}
  For any $\eps>0$ there exists a $\delta>0$ such that for any fixed
  $x \in \S^2_n$, with probability at least $1 - \eps$, the cell
  containing $x$ has diameter at least $\delta\sqrt{\log n}$.
\end{corollary}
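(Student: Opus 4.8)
The plan is to derive the lower bound on typical basin diameter from the upper bound $\E[N] \le Cn/\log n$ on the number of local maxima in Theorem \ref{thm:maxima}, using the observation that every cell must contain at least one local maximum of $U$. To see this, fix $z \in \cL$ and consider its basin $B(z)$. Since $\psi(x) = z$ means the forward flow $Y_x$ converges to $z$, and since $z$ is a \emph{well} (a point where $U \to -\infty$), the flow leaves the neighborhood of $z$ when run backwards; I would argue that the time-reversed flow started near $z$ must accumulate at a point where $F = 0$, and a standard argument (the potential $U$ is strictly increasing along backward trajectories, so backward limit points are critical points, and generically one expects local maxima) shows $B(z)$ contains a local maximum of $U$ in its closure. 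Thus the number of cells, which is $n$, is at most the number of local maxima $N$ — wait, this gives $n \le N$, contradicting $\E[N] \le Cn/\log n$; so the correct statement must be that a \emph{positive fraction} of maxima is needed, or rather that most cells share maxima on their boundaries. The right framing: each local maximum lies on the boundary of (typically) a bounded number of cells, so $N$ maxima can "serve" only $O(N)$ cells as boundary features, but what we actually use is simpler — most basins must be "large" because there are few maxima to go around.

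Concretely, here is the argument I would carry out. Let $x \in \S^2_n$ be fixed. Condition on $\cL$; the cell $B(\psi(x))$ containing $x$ has area exactly $1$ by Proposition \ref{prop:allocation}. I claim that if this cell had small diameter $d$, it would be contained in a spherical cap of radius $d$, hence of area at most $\pi d^2$ (for $d$ small relative to the sphere radius $\sqrt{n/4\pi}$), forcing $\pi d^2 \ge 1$, i.e. $d \ge 1/\sqrt\pi$. But that only gives a constant lower bound, not order $\sqrt{\log n}$, so area alone is insufficient; the extra factor $\sqrt{\log n}$ must come from the maxima count. The real mechanism: by Theorem \ref{thm:maxima} and Markov's inequality, with probability at least $1 - \eps/2$ we have $N \le (2C/\eps) \cdot n/\log n =: M$. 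Each basin contains at least one local maximum in its closure, but since each maximum is shared by only finitely many basins (say at most a universal constant $\kappa$ — provable from the local structure of the flow near a nondegenerate maximum, where exactly one basin flows "out" through each of finitely many separatrices), there are at most $\kappa M$ basins. This again would contradict there being $n$ basins unless $M \gtrsim n$.

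The resolution — and the key step I expect to be the main obstacle — is that the statement is really about \emph{volume/diameter tradeoff per maximum}: in a region between consecutive maxima the potential $U$ varies, and the gradient flow must traverse a distance comparable to the spacing between maxima. If there are only $M = O(n/\log n)$ maxima on a sphere of area $n$, the typical spacing between nearby maxima is of order $\sqrt{n/M} = \sqrt{\log n}$, and a basin, which must "reach" from a maximum region down to its well, inherits this length scale. To make this rigorous I would: (i) fix $x$, and consider the backward flow $Y_x(t)$ for $t \le 0$, which increases $U$ and limits to a critical point $p$ of $U$ that lies in $\overline{B(\psi(x))}$; (ii) among all $n$ points $x' \in \cL$, the wells, and the $\le M$ maxima, use a counting/pigeonhole argument on a $\delta\sqrt{\log n}$-net of $\S^2_n$ (which has $\asymp n/(\delta^2 \log n)$ points) to show that for suitable $\delta = \delta(\eps)$ small, with probability $\ge 1-\eps/2$ the ball $B(x, \delta\sqrt{\log n})$ contains no local maximum of $U$ other than possibly $p$ and contains at most one well; (iii) conclude that $B(x,\delta\sqrt{\log n})$ cannot contain the full boundary structure separating $\psi(x)$'s basin from its neighbors, so the basin — which is a single connected region whose boundary consists of separatrices joining maxima — must extend beyond $B(x,\delta\sqrt{\log n})$, giving diameter $\ge \delta\sqrt{\log n}$. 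Combining the two events of probability $\ge 1 - \eps/2$ finishes the proof. The delicate point in (iii) is controlling the boundary topology of a single cell in terms of the maxima it touches; I would lean on the description (used in the proof of Theorem \ref{thm:maxima}) of cell boundaries as unions of gradient-flow separatrices terminating at local maxima, so that a cell of diameter $< \delta\sqrt{\log n}$ would have its entire separatrix boundary inside a small ball containing too few maxima — a contradiction with the cell having area $1$ and with the generic count of separatrices per maximum.
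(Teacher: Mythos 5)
Your ingredients are the right ones, and your final plan (i)--(iii) is close in spirit to the paper's proof, which runs as follows: by Theorem \ref{thm:maxima} and Markov, with probability $\ge 1-\eps/2$ there are at most $Cn/\log n$ local maxima; the union of spherical caps of radius $R=\sqrt{\eps\log n/(2C\pi)}$ around the maxima then has area at most $\eps n/2$, so at most $\eps n/2$ of the unit-area cells can lie entirely inside that union; by rotational equivariance the cell of the fixed point $x$ avoids this with probability $\ge 1-\eps$, i.e.\ it contains a point at distance $>R$ from every maximum, and since every cell has a local maximum on its boundary, its diameter is at least $R$. However, as written your step (iii) has a genuine gap. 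You only exclude from $B(x,\delta\sqrt{\log n})$ the maxima ``other than possibly $p$,'' and then try to reach a contradiction via the separatrix structure of the cell boundary, ``the generic count of separatrices per maximum,'' and the cell having area $1$. None of this closes the argument: area $1$ only forces diameter $\gtrsim 1$, not $\sqrt{\log n}$; and there is no universal bound on how many cells a single maximum can serve --- with $n$ cells and only $O(n/\log n)$ maxima, a typical maximum must lie on the boundary of order $\log n$ cells, which is exactly why your earlier ``each maximum is shared by at most $\kappa$ basins'' detour collapsed. Controlling the boundary topology of a single cell is precisely what the paper avoids doing anywhere (cf.\ the footnote in Section \ref{sec:fairallocation}), so leaning on it here is not a step you can discharge from the results available. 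Your step (i) is also shakier than needed: the backward flow limit could a priori be a saddle, and ``generically one expects local maxima'' is not an argument (the paper simply asserts that each cell has a local maximum on its boundary, so you may take that as the common input, but you should not pretend the backward-flow sketch proves it).

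The fix is to drop the exception for $p$ entirely, at which point no topology is needed and your own first-moment idea suffices: by rotational invariance, the expected number of local maxima in $B(x,\delta\sqrt{\log n})$ is $\E[N]\cdot\pi\delta^2\log n/n\le C\pi\delta^2$, so for $\delta$ small this ball contains \emph{no} local maximum with probability $\ge 1-\eps$; since the closure of the cell containing $x$ does contain a local maximum, that maximum lies outside the ball, and as $x$ itself lies in the cell the diameter is at least $\delta\sqrt{\log n}$ (up to an immaterial adjustment of $\delta$). This also makes your $\delta\sqrt{\log n}$-net and the ``at most one well'' condition in (ii) unnecessary. With that repair your argument becomes essentially the paper's, with the bookkeeping done around $x$ rather than around the maxima.
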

Note that \eqref{eq:ga-expectation-bound} from Theorem
\ref{thm:finiteexpectation} also gives a lower bound on $|\psi(x) -
x|$. However, the bound is only for the expectation, allowing for the
possibility that $|\psi(x) - x|$ is usually of constant order but
takes very large values with a small probability. Corollary
\ref{cor:diameter} rules out this possibility. The short proof is
deferred to Section \ref{sec:intro3}.

As mentioned above, the bound \eqref{eq:ga-expectation-bound} is
optimal among all allocation rules up to multiplication by a constant
for the case where the points of $\cL$ are uniform and
independent. However, there exist other rotationally equivariant point
processes that are spread more evenly over the sphere, and in these
cases it is possible to have $\E[|\psi(x) - x|] = O(1)$. We now
introduce one such process constructed by taking the roots of a
certain random Gaussian polynomial. Specifically, we look at the
polynomial
\begin{equation} \label{eq45}
p(z) = \sum_{k = 0}^n \zeta_k \frac{\sqrt{n(n-1) \cdots (n - k +
		1)}}{\sqrt{k!}} z^k,
\end{equation}
where $\zeta_1, \ldots , \zeta_n$ are independent standard complex
Gaussians. The roots $\lambda_1, \ldots , \lambda_n$ of $p$ are then
$n$ random points in the complex plane, which we can bring to the
sphere via stereographic projection in such a way that
\[ \L = \left\{ P_n^{-1}(\sqrt{n} \lambda_k) \right\}_{k=1}^n \]
is a rotationally equivariant random set of $n$ points on $\S^2_n$
(see Section \ref{sec:gaussian-polynomial-roots} for
details). Heuristically, the points of $\L$ are distributed more
evenly than independent uniformly random points, because roots of
random polynomials tend to ``repel'' each other (see Fig.
\ref{fig:gaussian-sim}). This can be quantified as follows.
\begin{prop}\label{prop:gaf}
  Let $\psi : \S^2_n \to \L$ be the gravitational allocation to
  $\L$. Then,
  \begin{equation} \label{eq:poly-roots-bound}
	\E\bigg[ \frac{1}{n} \int_{\S^2_n} |x - \psi(x)| d\lambda_n(x)\bigg] \leq \frac{\sqrt{\pi}}{4}.
  \end{equation}
\end{prop}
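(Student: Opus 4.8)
The plan is to exploit the $L^1$-identity \eqref{eq:L^1-identity}, which reduces the problem to bounding $\frac{1}{2\pi n}\E\int_{\S^2_n}|F(x)|\,d\lambda_n(x)$. Indeed, for each $x$ the length of the integral curve $Y_x$ from $0$ to $\tau_x$ is $\int_0^{\tau_x}|F(Y_x(t))|\,dt$, and this upper bounds the chordal distance $|x-\psi(x)|$ (the curve connects $x$ to $\psi(x)$, and Euclidean distance in $\R^3$ is at most arc length). Taking expectations over $\L$ and applying \eqref{eq:L^1-identity}, it suffices to show $\frac{1}{2\pi n}\E\int_{\S^2_n}|F(x)|\,d\lambda_n(x)\le \sqrt{\pi}/4$, and by rotational equivariance of the point process (the roots of $p$, suitably projected, are rotation-invariant in law) this is just $\frac{1}{2\pi}\E|F(x_0)|$ for any fixed $x_0$.

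Next I would compute the one-point statistics of the force field. Since $|F|$ is a nonnegative scalar, a natural route is Jensen: $\E|F(x_0)|\le (\E|F(x_0)|^2)^{1/2}$, so it remains to show $\E|F(x_0)|^2 \le \pi^2 n/4$, equivalently $\frac{1}{2\pi}\sqrt{\E|F(x_0)|^2}\le \sqrt{\pi}/4$. The force $F(x_0)$ is the (spherical) gradient of $U(x)=\sum_{z\in\L}\log|x-z|$; for the zero set $\cL$ of the Gaussian polynomial $p$, the potential $U$ is (up to an additive smooth deterministic term coming from the change of variables to the sphere and the conformal factor) essentially $\log|p|$ pulled back through stereographic projection, so $F$ is expressible in terms of $p'/p$. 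The key point is that for this particular Gaussian analytic function the first intensity of $\nabla\log|p|$, and more precisely $\E|\nabla U(x_0)|^2$, can be computed exactly because $p$ is a Gaussian field with an explicitly known covariance kernel — the kernel is chosen precisely so that the zero set is rotationally invariant, and the relevant second-moment computation is a standard Edelman–Kostlan / Kac–Rice type calculation. I would carry this out by writing $F$ in local coordinates near $x_0$, expressing it via the logarithmic derivative of the Gaussian field, and using the explicit covariance to evaluate $\E|F(x_0)|^2$; the conformal factor from stereographic projection and the normalization by $\sqrt n$ should combine so that the answer is exactly $\pi^2 n/4$ (or smaller).

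The main obstacle will be the exact second-moment computation of the force and, relatedly, correctly handling the deterministic part of $U$. When we transport $\log|p(\sqrt n\,\lambda)|$ to the sphere, the allocation potential is not literally $\log|p|$ but differs by a deterministic radial function (the logarithm of a Jacobian-type factor), and although this extra term does not affect the allocation it does contribute to $F$; one must check that including it the total force still has the claimed second moment, or argue that the deterministic piece exactly cancels the mean drift so that $\E|F|^2$ reduces to the variance of the random part. Getting the constant to come out as $\sqrt\pi/4$ rather than some larger universal constant is the delicate bookkeeping: one needs the precise covariance structure of $p$ (the coefficients $\sqrt{n(n-1)\cdots(n-k+1)/k!}$ are exactly the ones making the kernel $(1+z\bar w)^n$-type), the precise conformal factor $\frac{2\sqrt n}{n+|z|^2}$ (say) of the projection $P_n$, and the $\frac{1}{2\pi}$ from \eqref{eq:L^1-identity}, all of which I expect to conspire into the clean bound. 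Everything else — the length-bounds-distance step, the use of rotational equivariance to pass to a single point, and the Cauchy–Schwarz step — is routine.
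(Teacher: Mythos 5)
Your reduction is exactly the paper's: the $L^1$-identity \eqref{eq:L^1-identity} plus ``chord length $\le$ arc length'' and rotational equivariance correctly reduce everything to showing $\frac{1}{2\pi}\E|F(x_0)| \le \frac{\sqrt{\pi}}{4}$ for one fixed $x_0$. The gap is in how you then bound $\E|F(x_0)|$: the Jensen/Cauchy--Schwarz step $\E|F(x_0)| \le (\E|F(x_0)|^2)^{1/2}$ cannot work, because $\E|F(x_0)|^2 = \infty$. The force blows up like $1/d$ when a zero lies at distance $d$ from $x_0$, and the one-point intensity of the zero set is bounded away from $0$, so $\P(|F(x_0)|>t) \gtrsim t^{-2}$ and the second moment diverges (only moments of order $<2$ are finite). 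Consequently no Kac--Rice/Edelman--Kostlan second-moment computation can produce the finite bound $\pi^2 n/4$ you are aiming for; the ``delicate bookkeeping'' you defer is not bookkeeping but a divergent quantity.

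What the paper does instead, and what your proposal is missing, is an exact first-moment computation based on an algebraic identity for the logarithmic derivative. Taking $x_0 = P_n^{-1}(0)$, the planar force is $f(0) = \sum_k \overline{(\sqrt{n}\lambda_k)}^{-1}$ (the drift term in \eqref{eq:planar-F-def} vanishes at the origin), and since $p'(0)/p(0) = -\sum_k \lambda_k^{-1}$ with $p(0)=\zeta_0$ and $p'(0)=\sqrt{n}\,\zeta_1$, one gets the closed form $F(x_0) = \pm\sqrt{\pi}\,\bar{\zeta}_1/\bar{\zeta}_0$ with $\zeta_0,\zeta_1$ independent standard complex Gaussians. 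Then $\E|F(x_0)| = \sqrt{\pi}\,\E|\zeta_1|\,\E|\zeta_0|^{-1} = \sqrt{\pi}\cdot\frac{\sqrt{\pi}}{2}\cdot\sqrt{\pi} = \frac{\pi\sqrt{\pi}}{2}$, which is finite even though the second moment is not (note $\E|\zeta_0|^{-1}<\infty$ but $\E|\zeta_0|^{-2}=\infty$, which is exactly where your approach breaks). Dividing by $2\pi$ gives $\frac{\sqrt{\pi}}{4}$. Incidentally, this also resolves your worry about the deterministic part of the potential: at the chosen point the extra drift term vanishes identically, and the conformal factor of $P_n$ there is exactly $\sqrt{\pi}$, so no cancellation argument is needed.
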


\begin{figure}
	\centering
	\includegraphics[scale=0.25]{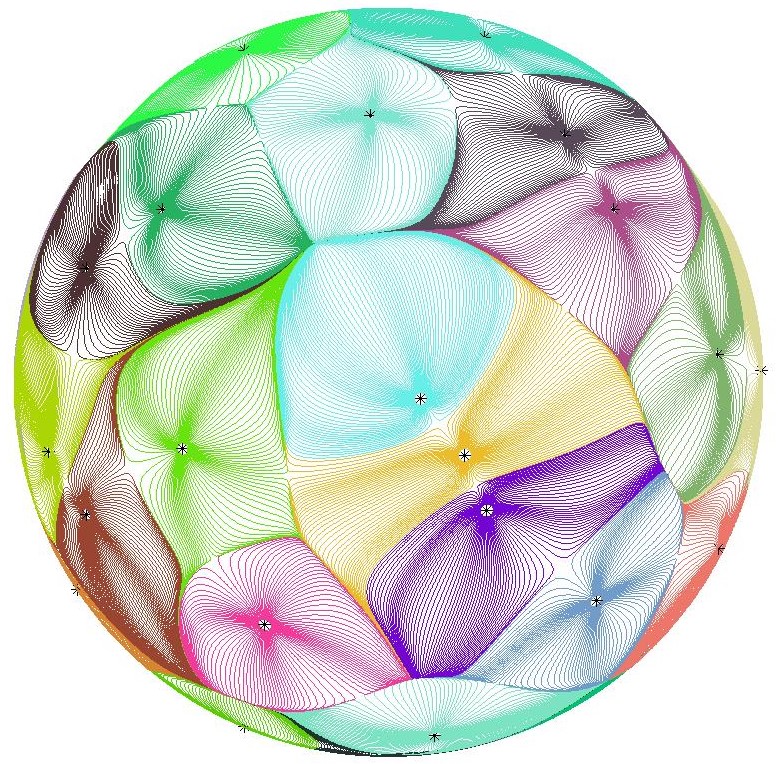}
	\caption{A simulation of gravitational allocation to the zeroes of the
		Gaussian random polynomial \eqref{eq45}. The cells are evenly proportioned, in
		contrast with the more elongated shapes seen in Figure \ref{fig3}. The simulation was made by R.\ Peled and J.\ Ding, based on code by M.\ Krishnapur.}
	\label{fig:gaussian-sim}
\end{figure}

\subsection{Related work on allocations}

Nazarov, Sodin, and Volberg \cite{nsv07} analyzed a fair allocation to
the zeros of a certain Gaussian entire function $g$, obtained from the
gradient flow determined by the potential
$U=\log|g|-\frac{1}{2}|z|^2$. The term ``gravitational allocation''
was introduced by Chatterjee, Peled, Peres, and Romik \cite{cppr10a},
who considered gravitational allocation to the points $\L \subset
\R^d$ of a unit intensity Poisson point process (PPP) for $d \geq
3$. Both papers \cite{nsv07} and \cite{cppr10a} prove an exponential
tail (with a small correction for the PPP when $d=3$) for the diameter
of the cell containing the origin. Phase transitions for the cells of
gravitational allocation to a PPP in $\R^d$ were studied in
\cite{cppr10b}.

The gravitational allocation for a PPP in $\R^d$ as studied in
\cite{cppr10a} is not well-defined for $d = 2$ because the sum
defining the force is divergent. Indeed, a lower bound for $d \le 2$
was given in \cite{hp05} (based on results from \cite{hl01,l02}): any
allocation rule for a PPP in $\R^d$ with $d = 1, 2$ satisfies
$\E[X^{d/2}]=\infty$, where $X$ is the distance between the origin and
the point it is allocated to. Nevertheless, one can study the behavior
of gravitational allocation in two dimensions by considering a finite
version of the problem, which motivates our present setting of taking
finitely many points on the sphere. Our quantitative
bounds are consistent with \cite{hl01}, because the average
distance (after appropriate scaling) grows as $\sqrt{\log n}$ with
the number of points $n$.

Gravitational allocation can also be viewed as an instantiation of the
Dacorogna-Moser \cite{dm90} scheme for a general Riemannian manifold
$D$ with volume measure $\frk m$. This scheme provides (under certain
smoothness assumptions) a coupling $\pi$ between probability measures
$\rho_0\frk m$ and $\rho_1\frk m$ by solving the PDE $\Delta
u=\rho_0-\rho_1$ and then considering the flow for the vector field
$-\nabla u$. The coupling $\pi$ is deterministic (i.e., if
$(X,Y)\sim\pi$ for $X\sim\rho_0\frk m$ and $Y\sim\rho_1\frk m$ then
$Y$ is a deterministic function of $X$), and is called a transport map
for this reason.

It was observed by Caracciolo, Lucibello, Parisi, and Sicuro
\cite{clps14} that the differential equation $\Delta u = \rho_0 -
\rho_1$ may be seen as a linearization of the Monge-Amp\`{e}re
equation, which describes the optimal transportation map for the
Wasserstein 2-distance. Based on this, they predicted the leading
order asymptotic term for optimal quadratic allocation in
$2$-dimensions (in addition to related predictions for higher
dimensions). The $2$-dimensional prediction was recently confirmed by
Ambrosio, Stra, and Trevisan \cite{ast16} for optimal quadratic
allocation cost to i.i.d.\ points sampled from a $2$-dimensional
Riemannian manifold. However, they do not obtain their result by
studying an explicit allocation method, but via a duality argument. Finer estimates with simpler proofs, for more general manifolds, and with sharper error bounds were obtained by Ambrosio and Glaudo \cite{ag18}.

Earlier works have also studied other allocation rules besides
gravitational allocation. The stable marriage allocation
\cite{hhp06,hhp09} can be defined for every translation-invariant
point process with unit intensity in $\R^d$ for $d \geq 1$: it is the
unique allocation which is stable in the sense of the Gale-Shapley
marriage problem. With this allocation, a.s.\ all cells are open and
bounded, but not necessarily connected. Allocation rules for a PPP in
$\R^d$ which minimize transportation cost per unit mass were
considered in \cite{hs13} with various cost functions, using tools
from optimal transportation.

We remark that the results of the current paper were announced in the
work \cite{pnas} by the same authors.

\subsection{Matchings: Proof of Corollary \ref{prop2} and related works}
\label{sec:matchings}

In this section we will give two short alternative proofs of Corollary \ref{prop2}, and then discuss other results on matchings.
\begin{proof}[Proof of Corollary \ref{prop2} using online matching algorithm]
	Consider the gravitational allocation $\psi$ to the point set $\mcl
	B$, and set $\varphi(a_1) = \psi(a_1)$, so that Theorem
	\ref{thm:finiteexpectation} gives
	\[ \E[|\varphi(a_1)-a_1|] \le C \sqrt{\log n}. \]
	Define
	\[ \mcl A' := \{a_2, \ldots , a_n\},\qquad
	\mcl B' := \mcl B \setminus \{ \psi(a_1) \}. \]
	Note that since $\psi$ is a fair allocation, $\psi(a_1)$ is uniformly
	distributed over elements of $\mcl B$ (under the randomness of
	$a_1$). Thus $\mcl A'$ and  $\mcl B'$ both have the law of $n-1$ points chosen independently and  uniformly at random from $\S^2_n$. Also, it is clear that  $\mcl A'$ and $\mcl B'$ are independent. Hence, we may repeat the same procedure with the sets $\mcl A'$ and $\mcl B'$ to define $\varphi(a_2)$, and we bound $|\varphi(a_2)-a_2|$ using Theorem \ref{thm:finiteexpectation} with $n-1$ points. (However, note that our  matching algorithm for $n-1$ points occurs on $\S^2_{n-1}$, so we  must rescale by a multiplicative factor $\sqrt{\frac{n}{n-1}}$.) Repeating this procedure, it follows that
	\eqbn
	\E\left[ \frac{1}{n} \sum_{k = 1}^n |\varphi(a_k)-a_k| \right]
	\le \frac{C}{n} \sum_{k = 1}^n \sqrt{\frac{n}{k}}\sqrt{\log (k\vee 2)}
	\le 2 C\log n.
	\eqen
\end{proof}

\begin{proof}[Proof of Corollary \ref{prop2} using the Birkhoff-von Neumann Theorem]
  Let $\psi_{\cA}$ and $\psi_{\cB}$ describe gravitational allocation
  to $\cA$ and $\cB$, respectively. Then, we can form a coupling
  between the uniform distributions on $\cA$ and $\cB$ as follows: we
  sample $(a, b) \in \cA \times \cB$ by drawing $X$ uniformly at
  random from $\S_n^2$ and setting $a = \psi_{\cA}(X)$ and $b =
  \psi_{\cB}(X)$.

  We have by Theorem \ref{thm:finiteexpectation} that the expected
  coupling distance satisfies the bound
  \begin{equation} \label{eq:avg-coupling-distance}
    \E|a - b| \le \E|a - X| + \E|b - X| \le 2C \sqrt{\log n}.
  \end{equation}
  By the Birkhoff-von Neumann theorem (see e.g.\ \cite[Theorem
    5.5]{lw01}), any coupling between two uniform distributions on $n$
  elements is a mixture of deterministic matchings between the two
  sample spaces. Thus, there exists some matching between $\cA$ and
  $\cB$ whose average matching distance is upper bounded by the
  quantity in \eqref{eq:avg-coupling-distance}, i.e., the average
  matching distance is of order $\sqrt{\log n}$.
\end{proof}

\begin{figure}
	\includegraphics[scale=1.15]{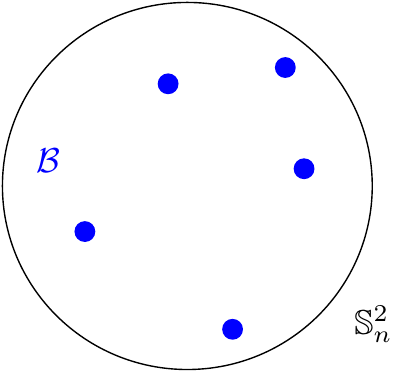}\qquad
	\includegraphics[scale=1.15]{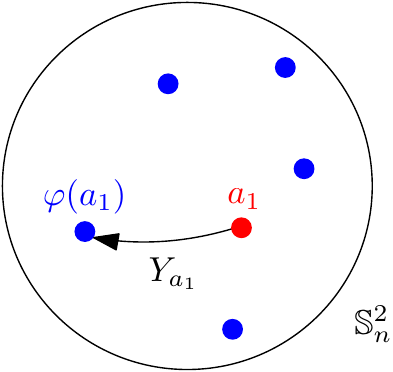}\qquad
	\includegraphics[scale=1.15]{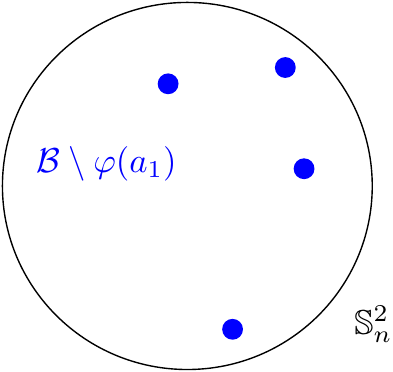}
	\caption{Illustration of the proof of Corollary \ref{prop2}. The set $\mcl B\setminus \varphi(a_1)$ consists of $n-1$ uniform and independent points on the sphere $\S_n^2$ of area $n$. }
\end{figure}
\begin{remark}
  Each proof of the corollary gives a general procedure for obtaining
  a matching from an allocation rule. In particular, we see from the
  second proof that if $\cA, \cB \subset \S^2_n$ are two sets of $n$
  points, and $\psi_{\cA}$ and $\psi_{\cB}$ are fair allocations of
  $\lambda_n$ to $\cA$ and $\cB$, respectively, then there exists a
  matching $\varphi: \cA \to \cB$ such that
  \begin{equation} \label{eq:matching-allocation-bound}
	\sum_{a \in \cA} |a - \varphi(a)| \le \int_{\S^2_n}\!\! |x - \psi_{\cA}(x)| d\lambda_n(x) + \int_{\S^2_n}\!\! |x - \psi_{\cB}(x)| d\lambda_n(x).
  \end{equation}
\end{remark}

Minimal matchings of random points in the plane have been extensively
studied (see e.g.\ \cite{akt84,ls89,t94,t14}). The asymptotic behavior
of the minimal average matching distance was identified in \cite{akt84}: it
was shown that for two sets $\mcl{A}$ and $\mcl{B}$ of $n$
i.i.d.\ uniformly chosen points from $[0,\sqrt n]^2$, there exist constants
$C_1,C_2>0$ such that
\begin{equation*}
\lim_{n \rightarrow \infty} \P\( C_1 \sqrt{\log n} \le
\frac{1}{n} \min_{\substack{\varphi: \mcl{A} \to \mcl{B} \\ \text{bijective}}} \sum_{a \in \mcl{A}} |\varphi(a)-a| \le C_2 \sqrt{\log n} \) = 1.
\end{equation*}
In the limit as $n \rightarrow \infty$, one expects minimal matching
on the sphere to be essentially equivalent to minimal matching in a
square, as the local geometries are the same to first order. Indeed,
we give a formal statement of one direction of this equivalence in the
next proposition, which is proved in Section \ref{sec:sphere-square}.
\begin{prop} \label{prop:sphere-square-matching}
	Consider any integer $n \ge 2$, and write $N = n^2$. Suppose that
	$\mcl{X}$ and $\mcl{Y}$ are two sets of $N$ i.i.d.\ uniformly random
	points from $\S^2_N$, and $\mcl{A}$ and $\mcl{B}$ are two sets of
	$n$ i.i.d.\ uniformly random points from $[0, \sqrt{n}]^2$. Then,
	for a universal constant $C$,
	\begin{equation*}
	\frac{1}{n} \E \min_{\substack{\varphi: \mcl{A} \to B \\ \text{bijective}}} \; \sum_{a \in \mcl{A}} |\varphi(a)-a| \le C + \frac{C}{N} \cdot \E \min_{\substack{\varphi: \mcl{X} \to \mcl{Y} \\ \text{bijective}}} \; \sum_{x \in \mcl{X}} |\varphi(x)-x|.
	\end{equation*}
\end{prop}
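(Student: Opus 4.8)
The plan is to transfer a minimal matching between $\mcl X$ and $\mcl Y$ on the large sphere $\S^2_N$ down to a minimal matching between $\mcl A$ and $\mcl B$ on the small square $[0,\sqrt n]^2$ by exploiting the fact that $\S^2_N$, near any fixed point, looks locally like a piece of the plane of comparable area. Concretely, I would tile $\S^2_N$ by roughly $n$ spherical ``patches'' $D_1,\dots,D_n$, each of area $\approx n$ and of diameter $O(\sqrt n)$ (this is possible since $N=n^2$, so the sphere has radius $\asymp n$ and there is no obstruction to cutting it into $n$ pieces each of which is a bounded-geometry region of linear size $\sqrt n$). Within each patch $D_i$ one can fix a bi-Lipschitz identification $\Phi_i : D_i \to [0,\sqrt n]^2$ with Lipschitz constants bounded by a universal constant, so that distances are distorted by at most a bounded factor.

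The key steps, in order, are as follows. First, fix a minimizing bijection $\varphi^* : \mcl X \to \mcl Y$; its expected cost is the quantity on the right. Second, condition on the cardinalities $|\mcl X \cap D_i|$: these are multinomial, and with probability close to $1$ each is within $O(\sqrt{n \log n})$ of its mean $n$; more importantly, the edges of $\varphi^*$ that stay inside a single patch, together with a small number of ``long'' edges that cross patches, can be used to build candidate matchings. Third — and this is where the averaging over $N=n^2$ patches does the work — observe that since $\mcl A$ has the law of $n$ i.i.d.\ uniform points in the square, and the restriction of $\mcl X$ to a typical patch $D_i$, pushed forward by $\Phi_i$, is (after conditioning on its size $m_i$) a set of $m_i$ i.i.d.\ uniform points in the square with $m_i \approx n$, the minimal matching cost in the square is stochastically comparable (up to the bounded bi-Lipschitz factor and up to correcting for the discrepancy $|m_i - n|$) to the cost of $\varphi^*$ restricted to $D_i$. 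Summing the inequality over $i=1,\dots,n$ and dividing by $n$, the sum $\sum_i (\text{cost of }\varphi^* \text{ inside } D_i) \le (\text{total cost of } \varphi^*)$, which after dividing by $n$ and using $N = n^2$ produces exactly the $\frac{C}{N}\E[\cdots]$ term; the additive constant $C$ absorbs the contributions of the cross-patch edges, the size corrections $|m_i-n|$, and the failure event of the cardinality concentration.

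To make the size-correction rigorous I would use the standard fact that adding or deleting one point changes the minimal matching cost in $[0,\sqrt n]^2$ by $O(\sqrt n)$ in expectation (by matching the extra point to a nearest neighbor, whose expected distance is $O(1)$ on the rescaled lattice — or one can invoke the $O(\sqrt{n\log n})$ upper bound on the total cost directly, which is more than enough since we only need an $O(1)$ error per point after dividing by $n$). I would also need that the \emph{expected} number of cross-patch edges of $\varphi^*$, times their typical length $O(\sqrt n)$, is $O(n^{3/2})$ so that after dividing by $n$ it contributes $O(\sqrt n)\cdot O(\sqrt n)/n = O(1)$ — this follows because each patch boundary has $(1)$-dimensional ``length'' $O(\sqrt n)$ and there are $O(\sqrt n)$ patches meeting near any point, so a crude union bound over the $n$ patches shows the total number of points of $\mcl X \cup \mcl Y$ within distance $O(1)$ of the tiling skeleton is $O(n^{1/2}\cdot n^{1/2}) = O(n)$ in expectation; long edges can then be rerouted at cost $O(\sqrt n)$ each. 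The main obstacle is the bookkeeping of step three: one must choose the tiling and the local charts so that the pushforward of $\mcl X|_{D_i}$ is genuinely i.i.d.\ uniform in the square (this requires $\Phi_i$ to be measure-preserving up to a bounded factor, which one then corrects by a further bounded bi-Lipschitz rescaling), and one must arrange that the matching built patch-by-patch is a \emph{global bijection} $\mcl A \to \mcl B$ — this forces one to reconcile the mismatch between $|\mcl A| = n$ and the $m_i$'s, which is precisely what the $O(1)$-per-point size-correction estimate handles. Everything else is soft: bounded distortion, linearity of expectation, and the elementary stability of minimal matching cost under point insertion/deletion.
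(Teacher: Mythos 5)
Your high-level plan---transfer the optimal spherical matching into square-sized charts and gain the factor $n/N$ by averaging over regions of area $n$---is the same as the paper's, which uses a single region $\wh Q = P_N^{-1}([0,\sqrt{\pi n}]^2)$ together with rotational symmetry in place of your sum over $n$ tiles. The genuine gap in your proposal is the treatment of edges of the optimal matching that cross patch boundaries. You justify a count of ``$O(n)$ cross-patch edges'' by counting points of $\cX\cup\cY$ within distance $O(1)$ of the tiling skeleton, but (i) the skeleton consists of $n$ patch boundaries of length $\Theta(\sqrt n)$ each, hence has total length $\Theta(n^{3/2})$, so even that point count is $\Theta(n^{3/2})$, not $O(n)$; and (ii) proximity of endpoints to the skeleton at any fixed scale does not control crossings at all, since an edge crosses whenever its geodesic meets the skeleton, and the relevant scale is the edge-length distribution of the optimal spherical matching, about which you may assume nothing here (in particular you cannot invoke an AKT-type upper bound on the sphere, since the proposition is precisely the tool used to transfer the AKT lower bound to the sphere). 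Realistically the number of crossing edges is of order $n^{3/2}\sqrt{\log n}$, and since each orphaned endpoint is rerouted at cost $\Theta(\sqrt n)$, the resulting correction is of order $N\sqrt{\log n}$: it cannot be absorbed into the additive constant $C$ as you assert, and to charge it instead to the $\frac{C}{N}\,\E[\cdot]$ term you need a bound of the form $\E\big[\#\{\text{crossing edges}\}\big] \le C\,\tfrac{|\text{skeleton}|}{N}\,\E\sum_{x\in\cX}|\gamma_x|$, where $\gamma_x$ are the matching geodesics. That is exactly the ingredient the paper supplies via rotational invariance of $(\cX,\cY)$ plus the spherical kinematic formula (applied there to $\partial\wh Q$, which has length only $O(\sqrt n)$); your proposal has no substitute for it, and the elementary alternatives (deterministic crossing counts of $O(1+\ell/\sqrt n)$ per edge, or splitting edges into long and short at some threshold) give totals that are too large by polynomial factors.

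Two smaller points also need tightening. Charts that are merely bi-Lipschitz with universal constants are not enough for coupling $\Phi_i(\cX\cap D_i)$ with $\cA$: a constant-factor area distortion gives per-point total variation distance $\Theta(1)$ from uniform and hence $\Theta(n)$ orphaned points per patch, which is fatal; you need exactly measure-preserving bi-Lipschitz charts (a Moser-type correction, which you gesture at) or, as in the paper, the stereographic chart whose projected density is within $O(n/N)$ of uniform. And in the size-mismatch step you should use $\E|m_i-n|=O(\sqrt n)$ rather than the high-probability bound $O(\sqrt{n\log n})$, which would already introduce a spurious $\sqrt{\log n}$ into the additive term. With the kinematic-formula ingredient added, your tiling-and-averaging scheme does go through, but at that point it is essentially the paper's argument run over $n$ congruent regions instead of one, with the sum over tiles replacing the paper's use of rotational symmetry to compare the cost inside one region to $\tfrac{n}{N}$ of the total.
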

\begin{remark}\label{rmk1}
	Combined with \cite{akt84}, Proposition \ref{prop:sphere-square-matching} implies that the bound of Corollary \ref{prop2} is optimal up to multiplication by a
	constant. Using \eqref{eq:matching-allocation-bound}, we further get that gravitational allocation is optimal in the sense that the bound \eqref{eq:ga-expectation-bound} cannot be better for other allocation rules.
\end{remark}

Leighton and Shor studied the optimal \emph{maximal} matching distance for uniform points in the square. The lower bound derived in \cite{s85,s86} and the upper bound derived in \cite{ls89} show that for two sets $\mcl{A}$ and $\mcl{B}$ of $n$
i.i.d.\ uniformly chosen points from $[0,\sqrt{n}]^2$, there exist constants
$C_1,C_2>0$ such that
\begin{equation*}
\lim_{n \rightarrow \infty} \P\( C_1 (\log n)^{3/4}
\le
\min_{\substack{\varphi: \mcl{A} \to \mcl{B} \\ \text{bijective}}} \max_{a \in \mcl{A}} |\varphi(a)-a|
\le C_2 (\log n)^{3/4} \) = 1.
\end{equation*}
The maximal travel distance for the matching algorithm used in the
first proof of Corollary \ref{prop2} is of order $\sqrt{n}$, as
compared to $(\log n)^{3/4}$ for the optimal matching. However, note
that our matching algorithm is online, meaning that the points of
$\mcl A$ are revealed one by one, and we have to match a given point
of $\mcl A$ to a point of $\mcl B$ before revealing the remaining
points of $\mcl A$. The typical maximal travel distance will always be
of order $\sqrt{n}$ for online matching algorithms.

The allocation and matching problems for uniform points have also been
studied for domains of dimension $d$ not necessarily equal to $2$, and
with cost function given by the $p$-th power of the distance for $p
\geq 1$. Asymptotic results for the optimal allocation or matching
have been obtained for $d = 1$ or $2$ and all $p \geq 1$ \cite{cs14,
  akt84} as well as for $d \geq 3$ and certain $p\geq 1$
\cite{bm02,dy95,b13,fg15}.

\subsection{Proof outline for distance bound (Theorem \ref{thm:finiteexpectation}) and a heuristic picture}
\label{sec:intro3}

In order to bound $|\psi(x) - x|$ we will bound separately the duration
$\tau_x$ of the flow $Y_x$ and its speed $|F(Y_x(t))|$ for
$t\in(0,\tau_x)$. The probability distribution of $\tau_x$ may be
calculated exactly using Liouville's theorem (Proposition
\ref{prop:liouville}) and turns out to be exponential (with a constant
mean independent of $n$).

It remains to control $|F(y)|$, which turns out to be of order
$\sqrt{\log n}$. If it is always less than $\sqrt{\log n}$, then
combining with the tail bounds for $\tau_x$ yields the theorem. However, this is not precisely the case, as $|F(y)|$ can be very large
if $y$ is close to a point in $\L$. We show in Section
\ref{sec:estimates} that the contribution to $|F(y)|$ coming from
points in $\L$ outside a ball centered at $y$ of radius
$\Theta(1/\sqrt{\log n})$ is very unlikely to exceed $C\sqrt{\log n}$
for $C \gg 1$. Therefore, if $|F(y)| \gg C\sqrt{\log n}$, the main
contribution to the force is most likely coming from points of $\L$
rather close to $y$. In that case, we argue (see Lemma
\ref{lem:large-force-is-swallowed}) that one of these nearby points
typically is the point of attraction for $y$ under the gravitational flow,
which gives a bound for the distance traveled when $|F(y)|$ is
large.

The simulations in Figure \ref{fig3} suggest that the cells formed by
gravitational allocation on the sphere are long and thin. This
qualitative picture is depicted in Figure \ref{fig2}, and the
accompanying description gives a heuristic argument along the lines of
our proof outline above for why this is the case.

\begin{figure}
	\centering
	\includegraphics[scale=1]{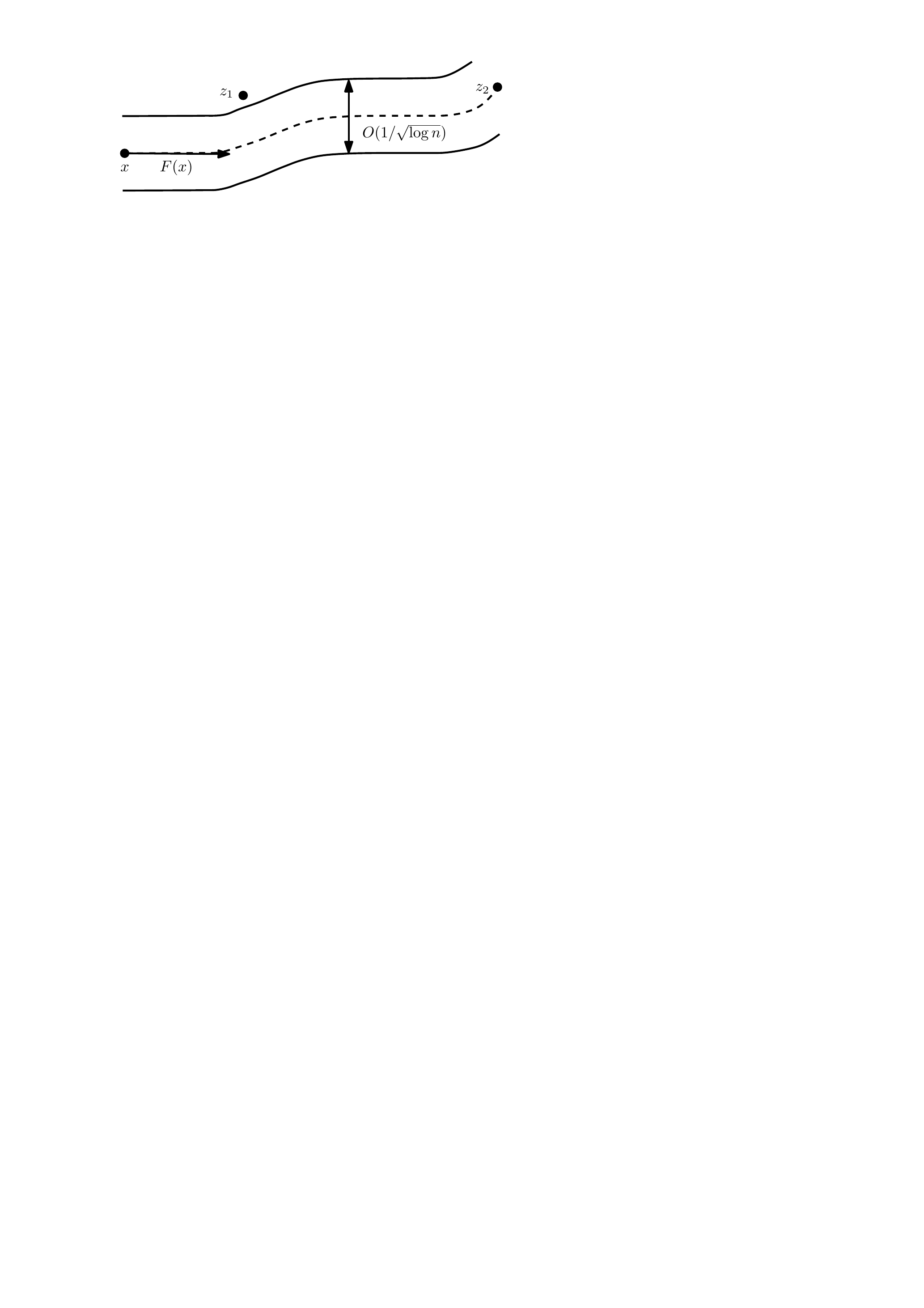}
	\caption{The figure gives a heuristic argument justifying Theorem \ref{thm:finiteexpectation}. Since the ``force'' $F$ is typically of order $\sqrt{\log n}$ and the force exerted by a point at distance $d$ has magnitude of order $1/d$, the force exerted on a particle is dominated by the forces from far away points, except if there is a point of $\cL$ at distance of order $1/\sqrt{\log n}$ or less. The probability that there are no points in a strip of width $1/\sqrt{\log n}$ and length $R\sqrt{\log n}$ decays exponentially in $R$, which suggests (heuristically) that the probability of traveling further than $R\sqrt{\log n}$ should be smaller than $\exp(-cR)$ for some $c>0$.}
	\label{fig2}
\end{figure}

Finally we provide the proof of Corollary \ref{cor:diameter} upon an application of Theorem \ref{thm:maxima}.
\begin{proof}[Proof of Corollary \ref{cor:diameter}]
	Let $E$ be the event that there are less than $Cn/\log n$ local
	maxima, where $C$ is a constant depending only on $\eps$ that is
	chosen large enough so that $\P(E) \ge 1 - \eps/2$ (this is possible
	by Markov's inequality and Theorem \ref{thm:maxima}).

	Let $R = \sqrt{\frac{(\log n)\eps}{2C\pi}}$, and note that for each
	local maximum, the spherical cap of radius $R$ centered at that
	maximum has area less than $\pi R^2 = \frac{\eps \log n}{2C}$. Thus,
	on the event $E$, the total area of points on the sphere within
	distance $R$ of a local maximum is at most
	\begin{equation} \label{eq:local-maxima-caps}
	\frac{Cn}{\log n} \cdot \frac{\eps \log n}{2C} = \frac{\eps n}{2},
	\end{equation}
	meaning that at most $\frac{\eps n}{2}$ of the gravitational
	allocation cells are fully contained within spherical caps of radius
	$R$ around local maxima.

	Next, let $E'$ denote the event that the cell containing $x$ has
	some point which is not within $R$ of any local maximum. In
	particular, note that each cell in the allocation contains one point
	in $\L$ and has at least one local maximum on its boundary, so
	whenever $E'$ holds, it means that the cell containing $x$ has
	diameter at least $R$. By \eqref{eq:local-maxima-caps} and
	rotational equivariance, we have that
	\[ \P(E') \ge \P(E' \mid E) \cdot \P(E) \ge \(1 - \frac{\eps}{2}\) \P(E) \ge 1 - \eps, \]
	which gives the desired bound with $\delta = \frac{R}{\sqrt{\log n}}
	= \sqrt{\frac{\eps}{2C\pi}}$.
\end{proof}

\subsection{Organization of the paper}

The organization of the rest of the paper is as follows. In Section
\ref{sec:fairallocation}, we prove Proposition \ref{prop:allocation}
establishing that gravitational allocation on the sphere is in fact a
fair allocation. We will then carry out most of our proofs in the
complex plane under stereographic projection rather than directly on
the sphere. Basic facts about converting between the coordinate
systems are recorded in Section \ref{sec:prelim}, which also contains
a restatement of Theorem \ref{thm:finiteexpectation} in terms of the
plane (given as Theorem \ref{thm:main-planar}). Section \ref{sec:finiteexpectation} contains the proof of Theorem
\ref{thm:main-planar} (and hence Theorem \ref{thm:finiteexpectation}),
with the proof of the main technical estimate deferred until Section
\ref{sec:estimates}. In Section \ref{sec:sphere-square} we relate
matchings on the sphere to matchings in a square (by proving
Proposition \ref{prop:sphere-square-matching}), and in Section
\ref{sec:maxima} we prove Theorem \ref{thm:maxima} on the number of
local maxima of the potential. Section
\ref{sec:gaussian-polynomial-roots} studies gravitational allocation to the zero set of the Gaussian polynomial \eqref{eq45} by proving Proposition \ref{prop:gaf}. Finally, we present a short list of open
problems in Section \ref{sec:openproblems}.

\subsection*{Acknowledgements}

We thank Manjunath Krishnapur for useful discussions as well as
sharing his code for producing simulations. We also thank Weston
Ungemach for reference suggestions related to Liouville's
theorem. Most of this work was carried out while N.\ Holden and
A.\ Zhai were visiting Microsoft Research in Redmond; they thank
Microsoft for the hospitality.

\section{Proof that gravitational allocation is a fair allocation} \label{sec:fairallocation}

In this section, we prove Proposition \ref{prop:allocation}. The
non-trivial property to verify is that for each $z \in \L$, we have
$\lambda_n(B(z)) = 1$ almost surely. Let $\Delta_S$ denote the
spherical Laplacian (i.e., the Laplace-Beltrami operator on the
sphere). The key property of our potential $U$ is that $\Delta_S U$ is
constant outside of $\L$, as seen in the next proposition.

\begin{prop} \label{prop:Delta-u}
	For a given $z \in \S^2_n$, let $g : \S^2_n \to \R$ be given by $g(x) =
	\log |x - z|$. We have
	\[ \Delta_S g(x) = 2\pi \delta_z - \frac{2\pi}{n}. \]
	Consequently,
	\[ \Delta_S U(x) = 2\pi \sum_{z \in \L} \delta_z - 2\pi. \]
	(We view $\delta_z$ as a distribution where $\int_{\S^2_n} g(x)
	\delta_z(x) \,d\lambda_n(x) = g(z)$ for any test function $g : \S^2_n \to
	\R$.)
\end{prop}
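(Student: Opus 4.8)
The plan is to use the rotational symmetry of $g$ about $z$ to reduce the computation of $\Delta_S g$ on $\S^2_n\setminus\{z\}$ to a one-variable calculation, and then to identify the distributional mass at $z$ by a Green's-identity argument on the complement of a small geodesic disk. Write $\rho=\sqrt{n/(4\pi)}$ for the radius of $\S^2_n$, and for $x\in\S^2_n$ let $\theta\in[0,\pi]$ be the angle at the origin between $x$ and $z$; then the geodesic distance from $x$ to $z$ is $\rho\theta$ and $|x-z|=2\rho\sin(\theta/2)$. Hence $g(x)=\log(2\rho)+\log\sin(\theta/2)$ is a function of $\theta$ alone, and away from $z$ I would apply the formula $\Delta_S f=\rho^{-2}(\sin\theta)^{-1}\partial_\theta\big(\sin\theta\,\partial_\theta f\big)$, valid for $\theta$-radial functions. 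A short computation, using $\sin\theta\,\partial_\theta\log\sin(\theta/2)=\cos^2(\theta/2)$ and $\partial_\theta\cos^2(\theta/2)=-\tfrac12\sin\theta$, gives $\Delta_S g=-\tfrac{1}{2\rho^2}=-\tfrac{2\pi}{n}$ on $\S^2_n\setminus\{z\}$. (An alternative route is to expand the ambient $\R^3$ Laplacian into its radial and spherical parts and use $\Delta_{\R^3}\log|x-z|=|x-z|^{-2}$, which yields the same constant.)

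Next I would handle the singularity. Since $g$ has only a logarithmic singularity at $z$ and $|\nabla g|$ blows up like the reciprocal of the distance to $z$, both are integrable near $z$, so $g$ is a distribution and $\langle\Delta_S g,\varphi\rangle=\int_{\S^2_n}g\,\Delta_S\varphi\,d\lambda_n$ for smooth $\varphi$. I would compute the right-hand side by applying Green's second identity on $\S^2_n\setminus B_\eps(z)$, where $B_\eps(z)$ is the geodesic disk of radius $\eps$, and letting $\eps\downarrow 0$. The interior term tends to $-\tfrac{2\pi}{n}\int_{\S^2_n}\varphi\,d\lambda_n$ by the previous step. On $\partial B_\eps(z)$, the boundary term containing the normal derivative of $\varphi$ is $O(\eps|\log\eps|)\to 0$, while the term $-\int_{\partial B_\eps(z)}\varphi\,\partial_\nu g\,d\ell$ tends to $2\pi\varphi(z)$: the outward normal derivative of $g$ relative to $\S^2_n\setminus B_\eps(z)$ points toward $z$ and equals $-(\text{geodesic distance})^{-1}+O(\text{geodesic distance})$ — because chordal and geodesic distance agree to second order near $z$ — and $\partial B_\eps(z)$ has length $2\pi\eps(1+O(\eps^2))$, so the flux converges to $2\pi$ times $\varphi(z)$. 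Combining, $\langle\Delta_S g,\varphi\rangle=2\pi\varphi(z)-\tfrac{2\pi}{n}\int_{\S^2_n}\varphi\,d\lambda_n$, which is precisely the asserted identity $\Delta_S g=2\pi\delta_z-\tfrac{2\pi}{n}$.

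The claim for $U$ is then immediate by summing this over the $n$ points of $\L$: $\Delta_S U=2\pi\sum_{z\in\L}\delta_z-n\cdot\tfrac{2\pi}{n}=2\pi\sum_{z\in\L}\delta_z-2\pi$, and as a consistency check, pairing with the constant function $1$ gives $\int_{\S^2_n}\Delta_S U\,d\lambda_n=0$, as required on a closed surface. The only real subtlety is in the Green's-identity step: one must confirm that the intrinsic geometry near $z$ — the curvature correction to the area element, and the gap between chordal and geodesic distance — does not perturb the singular coefficient away from $2\pi$. This is the familiar fact that $\log|\cdot|$ is, up to an additive constant, the two-dimensional Green's function, with curvature entering only at subleading order, and it is controlled precisely by the circumference and normal-derivative estimates indicated above.
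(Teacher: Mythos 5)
Your proof is correct. The smooth part is computed exactly as in the paper: both you and the authors exploit rotational symmetry about $z$, write $g=\log\bigl(2r_n\sin(\phi/2)\bigr)$ in terms of the polar angle, and use the radial form of the Laplace--Beltrami operator together with the identity $\sin\phi\,\partial_\phi g=\cos^2(\phi/2)$ to get the constant $-\tfrac{2\pi}{n}$ off $z$ (your sign is the correct one; the paper's displayed line reads $+\tfrac{2\pi}{n}$, an evident slip, since the stated conclusion requires $-\tfrac{2\pi}{n}$). Where you genuinely diverge is in identifying the coefficient of the point mass: the paper simply invokes the fact that $\int_{\S^2_n}\Delta_S g\,d\lambda_n=0$ on a closed surface and reads off the $2\pi\delta_z$ by normalization, which implicitly presumes that the distributional Laplacian is a constant plus a multiple of $\delta_z$ (no worse singular contribution); you instead prove this directly by excising a geodesic disk $B_\eps(z)$, applying Green's second identity, and checking that the flux $-\int_{\partial B_\eps}\varphi\,\partial_\nu g\,d\ell\to 2\pi\varphi(z)$ while the other boundary term is $O(\eps|\log\eps|)$. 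Your route is longer but self-contained and rigorously pins down the singular part, and your consistency check (pairing with $1$) recovers exactly the normalization the paper uses as its main argument; the paper's route is shorter at the cost of leaving the structure of the singularity implicit.
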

\begin{proof}
	Without loss of generality, we may assume $z = (0, 0, r_n)$, where $r_n=\sqrt{\frac{n}{4\pi}}$ is the radius of the sphere. In
	spherical coordinates, we then have $g(\theta, \phi) = \log\( 2
	\sin(\phi/2) \)+\log r_n$, where $\theta$ and $\phi$ denote the azimuthal and polar angles, respectively. Using the formula for $\Delta_S$ in spherical coordinates, we find that
	\begin{align*}
	\Delta_S g &= \frac{1}{r_n^2} \frac{1}{\sin \phi} \pd{}{\phi} \( \sin \phi \cdot
	\pd{g}{\phi} \) +
	\frac{1}{r_n^2}\frac{1}{\sin^2 \phi} \pd{^2 g}{\theta^2} \\
	&= \frac{1}{r_n^2}\frac{1}{\sin \phi} \pd{}{\phi} \cos^2(\phi/2)
	= \frac{2\pi}{n},
	\end{align*}
	which is valid at all points other than $z$.

	Since the integral of $\Delta_S g(x)$ with respect to area measure
	over $\S^2_n$ must be $0$, we deduce that $\Delta_S g = 2\pi \delta_z
	- \frac{2\pi}{n}$.
\end{proof}

Proposition \ref{prop:Delta-u} already gives an informal proof of
Proposition \ref{prop:allocation} via the divergence theorem. Consider
any $z \in \L$. If we assume that the cells $B(z)$ have piecewise
smooth boundaries, and then note that $F(x)$ is parallel to $\partial
B(z)$ at points $x\in\partial B(z)$ for which the boundary is smooth,
we get
\[ 2\pi - 2\pi \lambda_n(B(z))
= \int_{B(z)} \Delta_S U d\lambda_n
= \int_{B(z)} \vecdiv F d\lambda_n
= -\int_{\partial B(z)} F \cdot \mathbf{n}\,ds
= 0. \]

We give the formal proof using a slightly different approach
(following \cite{cppr10a}) involving Liouville's theorem for
calculating change of volume under flows, which will also be needed in
proving Theorem \ref{thm:finiteexpectation}. Conveniently, this
approach allows us to sidestep the technicalities involved in
analyzing the boundary of $B(z)$.\footnote{We also believe, however, that  the technicalities are not too hard to overcome using arguments  similar to those in \cite[Section 7]{nsv07}.} We now state the version of Liouville's theorem we need.

\begin{prop}[Liouville's Theorem] \label{prop:liouville}
	Let $M$ be an oriented $n$-dimensional Riemannian manifold, and let
	$d \alpha$ denote its volume form. Consider a smooth vector field
	$X$ on $M$.

	Let $\Phi_t$ denote the flow induced by $X$, where $\Phi_t(x) \in M$
	is defined for all $(x, t)$ in some maximal domain $\mathcal{D}
	\subseteq M \times \R$. Let $\Omega \subseteq M$ be an open set with
	compact closure. Then,
	\[ \left.\frac{d}{dt}\right|_{t=0} \int_{\Phi_t(\Omega)} \,d\alpha  = \int_{\Omega} \vecdiv X \,d\alpha. \]
\end{prop}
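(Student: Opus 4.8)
The plan is to convert the left-hand side into the integral of a pulled-back volume form over the fixed domain $\Omega$, differentiate under the integral sign, and then identify the resulting integrand via Cartan's formula. First, since $\overline\Omega$ is compact and $X$ is smooth, standard ODE theory supplies an $\eps > 0$ and an open set $V \supseteq \overline\Omega$ such that the flow $\Phi_t$ is defined on all of $V$ for every $|t| < \eps$. Shrinking $\eps$ if necessary, $\Phi_t|_V$ is a diffeomorphism onto its image, and it is orientation-preserving (it equals the identity at $t = 0$ and $\det D\Phi_t$ depends continuously on $t$). Hence for $|t| < \eps$ the change-of-variables formula for top forms gives
\[ \int_{\Phi_t(\Omega)} d\alpha = \int_{\Omega} \Phi_t^{*}(d\alpha). \]

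Next I would differentiate at $t = 0$. Because $d\alpha$ is a nowhere-vanishing top form, we may write $\Phi_t^{*}(d\alpha) = f(\cdot,t)\, d\alpha$ for a function $f$ that is jointly smooth in $(x,t)$ (the flow is jointly smooth), and since $\overline\Omega$ is compact, differentiation under the integral sign is justified:
\[ \left.\frac{d}{dt}\right|_{t=0} \int_{\Phi_t(\Omega)} d\alpha = \int_{\Omega} \left.\frac{\partial}{\partial t}\right|_{t=0} \Phi_t^{*}(d\alpha) = \int_{\Omega} \mathcal{L}_X(d\alpha), \]
where $\mathcal{L}_X$ is the Lie derivative along $X$; the last equality is simply the definition of $\mathcal{L}_X$ in terms of the flow.

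It remains to identify $\mathcal{L}_X(d\alpha)$ with $(\vecdiv X)\, d\alpha$. Cartan's magic formula $\mathcal{L}_X = d \circ \iota_X + \iota_X \circ d$ applied to the top-degree form $d\alpha$ gives $\mathcal{L}_X(d\alpha) = d(\iota_X d\alpha)$, since $d(d\alpha) = 0$ for dimensional reasons. On the other hand, the Riemannian divergence is characterized (or, depending on one's conventions, defined) by $\mathcal{L}_X(d\alpha) = (\vecdiv X)\, d\alpha$; a one-line computation in local coordinates confirms this, as $d(\iota_X d\alpha) = \frac{1}{\sqrt{g}}\,\partial_i\!\big(\sqrt{g}\, X^i\big)\, d\alpha = (\vecdiv X)\, d\alpha$ with $\sqrt{g} = \sqrt{\det(g_{ij})}$. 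Substituting into the previous display yields the claim.

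The only real obstacle I anticipate is bookkeeping rather than mathematics: making sure the flow is genuinely defined on a common neighborhood of $\overline\Omega$ for a uniform time interval (this is precisely where compactness of $\overline\Omega$ is used), that $\Phi_t$ restricts there to an orientation-preserving diffeomorphism so the unsigned change-of-variables formula applies, and that differentiation under the integral sign is legitimate. All of these are routine consequences of smoothness and compactness, and none of the argument uses any special feature of the sphere, consistent with the proposition being stated for a general Riemannian manifold $M$.
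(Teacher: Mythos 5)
Your argument is correct and is essentially the paper's proof written out in full: the paper likewise uses openness of the maximal flow domain together with compactness of $\overline\Omega$ to get the flow defined on a uniform time interval, and then cites the formulas from Lee (Proposition 18.18) — namely the pullback/Lie-derivative computation $\mathcal{L}_X(d\alpha) = (\vecdiv X)\,d\alpha$ and the interchange of differentiation and integration — exactly the steps you carry out explicitly via change of variables and Cartan's formula. So your proposal matches the paper's approach, just with the cited details supplied.
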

\begin{proof}
	Since the maximal domain $\mathcal{D}$ is open (see proof of Theorem
	17.9 in \cite{lee}) and the closure of $\Omega$ is compact, we know
	that $\Phi_t(\Omega)$ is actually defined for all $t$ in some open
	interval containing $0$. The result then follows from the formulas
	used in proving Proposition 18.18 in \cite{lee}, where the
	smoothness of the relevant $n$-forms allows us to interchange
	integration over $\Omega$ and differentiation with respect to $t$.
\end{proof}

Recall that for $x \in \S^2_n$ we wrote $(-\infty, \tau_x)$ for the
maximal domain for which $Y_x(t)$ is defined.
\begin{lemma} \label{lem:allocation-helper}
	Fix $z \in \L$, and for $t \ge 0$, define
	\[ E_t = \{x\in B(z)\,:\, \tau_x > t \}, \qquad V_t = \lambda_n(E_t). \]
    Let $\Phi_t$ denote the gravitational flow for time $t$. Then,
    $\Phi_t(E_t) = E_0$, and the pushforward of $\lambda_n$ (as a
    measure on $E_t$) under $\Phi_t$ is equal to $e^{-2\pi
      t}\lambda_n$ (as a measure on $E_0$). In particular, we have
    $V_t = e^{-2\pi t} V_0$.
\end{lemma}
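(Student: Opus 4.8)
The plan is to set up the two claims — $\Phi_t(E_t) = E_0$ and the pushforward identity — as consequences of the dynamics of the flow together with Liouville's theorem (Proposition \ref{prop:liouville}). First I would establish $\Phi_t(E_t) = E_0$. For the inclusion $\Phi_t(E_t)\subseteq E_0$: if $x\in E_t$, then $\tau_x > t$, so $\Phi_t(x)$ is well-defined; moreover $Y_{\Phi_t(x)}(s) = Y_x(s+t)$ for $s \in (-\infty, \tau_x - t)$, which shows $\tau_{\Phi_t(x)} = \tau_x - t$ and that $\lim_{s\uparrow\tau_{\Phi_t(x)}} Y_{\Phi_t(x)}(s) = \lim_{s\uparrow\tau_x} Y_x(s) = z$ since $x\in B(z)$. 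Hence $\Phi_t(x)\in B(z)$ and $\tau_{\Phi_t(x)}>0$, i.e. $\Phi_t(x)\in E_0$. The reverse inclusion $E_0\subseteq\Phi_t(E_t)$ uses the fact that the flow is defined for \emph{all} negative times: given $y\in E_0$, set $x = \Phi_{-t}(y)=Y_y(-t)$, which exists because the maximal domain is $(-\infty,\tau_y)$; then by the same reparametrization $\tau_x = \tau_y + t > t$ and $\psi(x) = \psi(y) = z$, so $x\in E_t$ and $\Phi_t(x)=y$. One should also check $\Phi_t$ is injective on $E_t$ (immediate from invertibility of the flow map) so that it is a bijection $E_t\to E_0$.

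Next, for the pushforward statement, the idea is to differentiate the volume of $\Phi_s(\Omega)$ in $s$ for an arbitrary open $\Omega$ with compact closure contained in the (open) set $E_t$, and then integrate. By Proposition \ref{prop:Delta-u}, on $\S^2_n\setminus\L$ we have $\vecdiv F = \Delta_S U = -2\pi$; since the flow stays away from $\L$, Proposition \ref{prop:liouville} applied at each time gives $\frac{d}{ds}\lambda_n(\Phi_s(\Omega)) = \int_{\Phi_s(\Omega)}\vecdiv F\,d\lambda_n = -2\pi\,\lambda_n(\Phi_s(\Omega))$, whence $\lambda_n(\Phi_s(\Omega)) = e^{-2\pi s}\lambda_n(\Omega)$ for all $s$ in the relevant interval, in particular at $s=t$. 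Since this holds for every such $\Omega$, and these open sets generate the Borel $\sigma$-algebra on $E_t$ (e.g. by a monotone class / $\pi$-$\lambda$ argument, using that $E_t$ is open and $\Phi_t$ is a homeomorphism onto $E_0$), the pushforward of $\lambda_n|_{E_t}$ under $\Phi_t$ equals $e^{-2\pi t}\lambda_n|_{E_0}$. Taking $\Omega$ to exhaust $E_t$ (or simply evaluating the pushforward of all of $E_t$) yields $V_t = e^{-2\pi t}V_0$.

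The main obstacle, and the step deserving the most care, is the measurability and regularity bookkeeping: one must confirm that $E_t$ is measurable (indeed that $\{x : \tau_x > t\}$ is open, which follows from openness of the maximal domain $\mathcal D$ of the flow, cited from \cite{lee}), that $B(z)$ is measurable (guaranteed by $\psi$ being a measurable function), and that Liouville's theorem can be applied along the whole flow and then "patched" over an exhaustion of $E_t$ by relatively compact open sets — the subtlety being that $\Phi_t$ need not extend continuously to $\partial E_t$, so one cannot naively take $\Omega = E_t$. This is exactly the technicality the authors flag as being sidestepped relative to a direct divergence-theorem argument; handling it cleanly is the crux, and everything else is routine reparametrization of integral curves plus the ODE $V'(s) = -2\pi V(s)$.
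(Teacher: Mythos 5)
Your overall route is the same as the paper's: show that $\Phi_t$ is a bijection from $E_t$ onto $E_0$ using that the flow exists for all negative times, apply Liouville's theorem (Proposition \ref{prop:liouville}) to open $\Omega$ with compact closure in $\S^2_n\setminus\L$, solve the resulting linear ODE for $\lambda_n(\Phi_s(\Omega))$, and pass to general measurable sets by approximation. However, there is a sign error at the key computational step which, if followed consistently, yields the reverse of the lemma. Since $F=-\nabla_S U$, we have $\vecdiv F=-\Delta_S U$, and by Proposition \ref{prop:Delta-u} this equals $+2\pi$ on $\S^2_n\setminus\L$, not $-2\pi$ as you wrote. Liouville's theorem therefore gives $\frac{d}{ds}\lambda_n(\Phi_s(\Omega))=2\pi\,\lambda_n(\Phi_s(\Omega))$, hence $\lambda_n(\Phi_t(\Omega))=e^{2\pi t}\lambda_n(\Omega)$, equivalently $\lambda_n(\Omega)=e^{-2\pi t}\lambda_n(\Phi_t(\Omega))$: the forward flow \emph{expands} area by $e^{2\pi t}$, and it is exactly this relation that translates into the statement that the pushforward of $\lambda_n|_{E_t}$ under $\Phi_t$ is $e^{-2\pi t}\lambda_n|_{E_0}$. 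Your relation $\lambda_n(\Phi_t(\Omega))=e^{-2\pi t}\lambda_n(\Omega)$ would instead give pushforward $e^{+2\pi t}\lambda_n$ and $V_t=e^{2\pi t}V_0$, which is impossible since $E_t\subseteq E_0$. In your write-up the wrong sign for $\vecdiv F$ and a second inversion when converting the $\Omega$-relation into the pushforward statement cancel each other, so the final claim happens to agree with the lemma, but the derivation as written is internally inconsistent. The picture to keep in mind is that $V_t$ decays because mass is absorbed at $z$ in finite time (the set $E_t$ shrinks), while away from $\L$ the field has positive divergence, so the flow map itself dilates area.

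The remainder is sound: your verification that $\Phi_t\colon E_t\to E_0$ is a bijection (via $\tau_{\Phi_t(x)}=\tau_x-t$ and backward completeness) is in fact more detailed than the paper's one-line remark, and the measure-theoretic bookkeeping, approximating measurable subsets of $E_0$ by sets $\Phi_t(\Omega)$ with $\Omega$ open and relatively compact in $\S^2_n\setminus\L$, is the same device the paper uses. One small caveat: you invoke openness of $E_t$, but $E_t=B(z)\cap\{x:\tau_x>t\}$ and only the second set is clearly open (from openness of the maximal flow domain); $B(z)$ is a priori only measurable. This is harmless, since the approximation argument needs only measurability of $E_t$ together with open $\Omega\subseteq E_t$, but as stated it is an unproved claim.
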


\begin{figure}
	\begin{center}
		\includegraphics{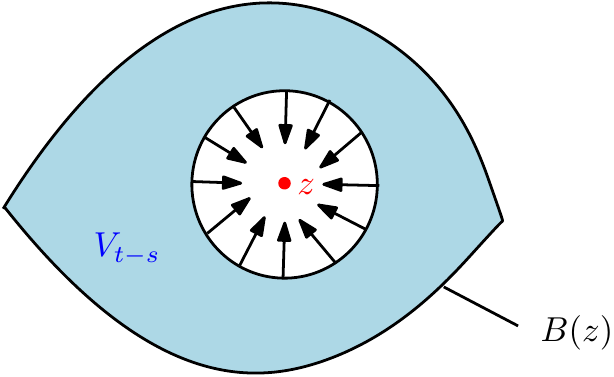}
	\end{center}
	\caption{Illustration of the proof of Lemma \ref{lem:allocation-helper}.}
\end{figure}

\begin{proof}
  We apply Proposition \ref{prop:liouville} to $\S^2_n \setminus \L$
  with the vector field $X = F = -\nabla_S U$, so that $\vecdiv X =
  \vecdiv F = -\Delta_S U$.

  Recall that $\Phi_{-s}(x) = Y_x(-s)$ is defined for all $x \in
  \S^2_n$ and $s \in (0, \infty)$. Thus, for all $s \in (0,t)$, we
  have that $\Phi_s$ is a bijection from $E_t$ to $E_{t-s}$ (with
  inverse $\Phi_{-s}$). Now, consider any $\Omega \subseteq E_t$ that
  is open with compact closure in $\S^2_n \setminus \L$. By
  Proposition \ref{prop:liouville}, we obtain for $0 \le s \le t$ that
  \[ \frac{d}{ds} \lambda_n(\Phi_s(\Omega)) = -\int_{\Phi_s(\Omega)} \Delta_S U \,d\lambda_n = \int_{\Phi_s(\Omega)} 2\pi \,d\lambda_n = 2\pi \lambda_n(\Phi_s(\Omega)). \]
  Solving the resulting differential equation yields
  \[ \lambda_n(\Omega) = e^{-2\pi t} \lambda_n(\Phi_t(\Omega)). \]
  Since any measurable subset of $E_0$ can be approximated by a set of
  the form $\Phi_t(\Omega)$, this shows that the pushforward of
  $\lambda_n$ under $\Phi_t$ is $e^{-2\pi t} \lambda_n$.
\end{proof}

We can now give the formal proof of Proposition \ref{prop:allocation}.

\begin{proof}[Proof of Proposition \ref{prop:allocation}]
	Consider any $z \in \L$, and define $E_t$ and $V_t$ as in Lemma
	\ref{lem:allocation-helper}. By Lemma \ref{lem:allocation-helper}, we
	have for all $t$ that
	\begin{equation} \label{eq:liouville-V}
	V_{0} - V_{t} = V_{0}(1-e^{-2\pi t}) = 2\pi V_{0} \cdot t+O(t^2).
	\end{equation}
	We will deduce that $V_0=1$ by estimating $V_0 - V_t$ in another way for
	small $t$.

	For any $x \in \S^2_n$, let us identify the tangent space $T_x \S^2_n$
	with a plane in $\R^3$ in the natural way\footnote{i.e., by $T_x \S^2_n
		\subset T_x \R^3 \cong \R^3$.}, so that $F(x)$ may be regarded
	as a vector in $\R^3$. By a direct calculation, we have $F(x) =
	\frac{z-x}{|z-x|^2} + O(1)$ for $x$ in a neighborhood of $z$. This
	implies
	\[ \frac{d}{dt} \left| Y_x(t) - z \right|^2 = 2\langle F(Y_x(t)), Y_x(t) - z \rangle = -2 + O\( |Y_x(t) - z| \). \]
	Write $E_{0,\eps} = E_0 \setminus E_\eps$. The above estimate implies
	for $\eps \rightarrow 0$ that
	\[ \sup_{x \in E_{0,\eps}} |x - z|^2 \le 2 \eps + o(\eps) \quad\text{and}\quad \inf_{x \not\in E_{0,\eps}} |x - z|^2 \ge 2 \eps - o(\eps). \]
	Thus, $E_{0,\eps}$ is bounded between spherical caps of radius
	$\sqrt{2\eps} \pm o(\sqrt{\eps})$, which means it has area $2\pi \eps +
	o(\eps)$. This gives
	\[ V_0 - V_{\eps} = \lambda_n(E_{0,\eps}) = 2\pi \cdot \eps + o(\eps). \]
	Comparing to \eqref{eq:liouville-V}, we conclude that $V_0 = 1$, as
	desired.
\end{proof}

Lemma \ref{lem:allocation-helper} is also the main observation needed
to explain the identity \eqref{eq:L^1-identity} relating travel
distance to average force. Essentially, it implies that the
gravitational flow linearly interpolates between the uniform measure
on $\S^2_n$ and the (discrete) uniform measure on $\L$. Consequently,
each gradient vector is ``flowed through'' by the same total mass. We
turn this into a formal proof below.

\begin{proof}[Proof of \eqref{eq:L^1-identity}]
  Take any $z \in \L$, and let $E_t$ be as in Lemma
  \ref{lem:allocation-helper}. Then, we have
  \begin{align*}
    & \int_{E_0} \int_0^{\tau_x} |F(Y_x(t))| \,dt \,d\lambda_n(x) =  \int_0^\infty \int_{E_0} |F(Y_x(t))| \cdot \1_{\tau_x > t} \,d\lambda_n(x) \,dt \\
    &\qquad=  \int_0^\infty \int_{E_t} |F(Y_x(t))| \cdot \,d\lambda_n(x) \,dt =  \int_0^\infty \int_{E_0} |F(x)| \cdot e^{-2\pi t} \,d\lambda_n(x) \,dt \\
    &\qquad=  \int_0^\infty e^{-2\pi t}\,dt \cdot \int_{E_0} |F(x)| \,dx = \frac{1}{2\pi} \int_{E_0} |F(x)|
  \,d\lambda_n(x).
  \end{align*}
  Note that $E_0 = B(z) \setminus \{z\}$, so averaging over all $z \in
  \L$, we obtain \eqref{eq:L^1-identity}.
\end{proof}

\section{Stereographic projection}
\label{sec:prelim}
Rather than work directly on the sphere, it is more convenient to work
in the plane via stereographic projection. We devote this section to
describing how to transform between the two coordinate systems, and we
give a restatement of Theorem \ref{thm:finiteexpectation} for the
plane.

Let $H = \R^2 \times \{0\} \subset \R^3$ denote the horizontal plane,
and let $z_0 = (0, 0, 1)$. The usual stereographic projection map $P:
\R^3\to\R^3$ is given by
\[ P(x) = z_0 + \frac{2(x - z_0)}{|x - z_0|^2}. \]
Let $r_n=\sqrt{\frac{n}{4\pi}}$ denote the radius of $\S^2_n$. We use
the rescaled version of $P$ defined by $P_n(x) := \sqrt{n}
P(r_n^{-1}x)$. The next proposition collects a few basic facts about
$P_n$; these can be verified by elementary calculations.

\begin{prop} \label{prop:P_n-properties}
	The map $P_n: \R^3\to\R^3$ has the following properties.
	\begin{itemize}
		\item For any $x, y \in \R^3$, we have
		\[ |P_n(x) - P_n(y)|^2 = \frac{4n r_n^2 \cdot |x - y|^2}{|x - r_nz_0|^2 \cdot |y - r_nz_0|^2}. \]
		\item $P_n$ is a conformal map from $\S^2_n \setminus \{r_n z_0\}$ to
		$H$. Its conformal scaling factor is $\frac{2\sqrt{n}r_n}{|x -
			r_nz_0|^2}$, i.e., if $g$ and $g'$ are the respective metrics on
		$\S^2_n \setminus \{r_n z_0\}$ and $H$, then
		\[ \( \frac{2\sqrt{n}r_n}{|x - r_nz_0|^2} \)^2 g_x = P_n^* g_{P_n(x)}. \]
	\end{itemize}
\end{prop}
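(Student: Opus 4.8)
The plan is to reduce the entire proposition to the classical (unrescaled) stereographic projection $P$ and then track the effect of the two rescalings. First I would record the chordal-distance identity for $P$. Writing $u = x - z_0$ and $v = y - z_0$, one has $P(x) - P(y) = 2\bigl(\tfrac{u}{|u|^2} - \tfrac{v}{|v|^2}\bigr)$, and expanding the square with a common denominator gives
\[ |P(x) - P(y)|^2 = 4\Bigl( \tfrac{1}{|u|^2} - \tfrac{2\la u,v\ra}{|u|^2|v|^2} + \tfrac{1}{|v|^2} \Bigr) = \tfrac{4|u - v|^2}{|u|^2 |v|^2} = \tfrac{4|x-y|^2}{|x - z_0|^2 |y - z_0|^2}, \]
using $u - v = x - y$. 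This holds for all $x, y \in \R^3 \setminus \{z_0\}$. Then, since $P_n = \sqrt n\, P(r_n^{-1}\,\cdot\,)$, I would substitute $x \mapsto r_n^{-1} x$, $y \mapsto r_n^{-1} y$ and multiply by $n$, using $|r_n^{-1}x - z_0|^2 = r_n^{-2}|x - r_n z_0|^2$; collecting the powers of $r_n$ yields exactly the claimed formula $|P_n(x) - P_n(y)|^2 = \tfrac{4 n r_n^2 |x-y|^2}{|x - r_n z_0|^2 |y - r_n z_0|^2}$.

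For the conformality statement, I would take the infinitesimal form of this formula on the sphere. Given $x \in \S^2_n \setminus \{r_n z_0\}$ and a tangent vector $v \in T_x \S^2_n \subset \R^3$, choose a curve $y(\eps)$ in $\S^2_n$ with $y(0) = x$ and $y'(0) = v$; plugging into the distance formula and letting $\eps \to 0$ (so that the denominator factor $|y(\eps) - r_n z_0|^2 \to |x - r_n z_0|^2$) gives
\[ |dP_n(v)|^2 = \Bigl( \tfrac{2\sqrt n\, r_n}{|x - r_n z_0|^2} \Bigr)^2 |v|^2. \]
Since the metrics $g$ on $\S^2_n \setminus \{r_n z_0\}$ and $g'$ on $H$ are both induced from the Euclidean metric on $\R^3$, i.e.\ $g_x(v,v) = |v|^2$ and likewise for $g'$, this is precisely $\bigl( \tfrac{2\sqrt n\, r_n}{|x - r_n z_0|^2} \bigr)^2 g_x = P_n^* g'_{P_n(x)}$; in particular $P_n$ is conformal with the asserted scaling factor. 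Finally, that $P_n$ maps $\S^2_n \setminus \{r_n z_0\}$ bijectively onto $H$ follows from the classical fact that $P$ maps the unit sphere minus $z_0$ onto $H$, together with the observations that $r_n^{-1}\S^2_n$ is the unit sphere and $\sqrt n\, H = H$.

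As for the main obstacle: there really is none of substance — this is exactly why the paper only asserts the proposition and defers it to elementary calculation. The only points requiring any care are bookkeeping the rescaling factors $r_n$ and $\sqrt n$ consistently, and recognizing that the "conformal scaling factor" claim is literally the infinitesimal version of the chordal-distance identity once one recalls that both $\S^2_n \setminus \{r_n z_0\}$ and $H$ carry the metric induced from $\R^3$.
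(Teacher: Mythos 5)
Your proof is correct: the chordal-distance identity for the unrescaled projection $P$, the substitution $x \mapsto r_n^{-1}x$ with the factor $\sqrt{n}$, and the infinitesimal limit along a curve in $\S^2_n$ together give exactly both bullet points, and this is precisely the elementary calculation the paper asserts without writing out (it offers no proof of Proposition \ref{prop:P_n-properties}). No gaps worth noting beyond the polarization step from quadratic to bilinear forms, which is immediate.
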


Let $\wt{\L}=\{ P_n(y)\,:\,y\in\L \}$ be the image of $\L$ under
$P_n$. Note that the points of $\wt{\L}$ are drawn independently from a
measure $\mu_n$ on $\R^2$ that is the pushforward under $P_n$ of the
uniform probability measure on $\S^2_n$. For $x \in H \cong \R^2$, let
\[ \rho_n(x) = \sqrt{1 + \frac{|x|^2}{n}}. \]
From the conformal scaling in Proposition \ref{prop:P_n-properties},
it is straightforward to check that $\mu_n$ has density
\eqbn
\frac{d\mu_n}{dx} =
\frac{1}{\pi n \rho_n(x)^4}.
\eqen

Next, we give the planar version of our potential function. We define
for any $x, y \in \R^2$ the planar potential functions
\begin{equation} \label{eq:planar-U-def}
u(x, y) = u_y(x) = \log \frac{|x - y|}{\rho_n(x) \rho_n(y)}, \qquad
u(x) = \sum_{y \in {\L}} u(x, P_n(y))=\sum_{y \in \wt{\L}} u(x, y).
\end{equation}
By Proposition \ref{prop:P_n-properties}, we see that $u$ satisfies
for all ${x}, {y} \in \S^2_n \setminus \{r_n z_0 \}$
\[ u(P_n({x}), P_n({y})) = \log \( \frac{4n}{r_n^2} |{x} - {y}| \) = \log |{x} - {y}| + \log(16 \pi), \]
whence $u(P_n({x})) = U({x}) + n \log(16 \pi)$. We remark
that since we only care about the gradient of the potential, the
additive constant term $n \log(16 \pi)$ is not important.

We also define for $x, y \in \R^2$ the planar gradient
\begin{equation} \label{eq:planar-F-def}
f(x, y) = -\nabla u_y(x) = \frac{y - x}{|x - y|^2} + \frac{1}{n} \cdot \frac{x}{1 + \frac{|x|^2}{n}}, \qquad f(x) = \sum_{y \in \wt{\L}} f(x, y).
\end{equation}
Note however that $f(P_n(x))$ is not simply the pushforward of $F(x)$
under $P_n$ for $x\in\S_n^2$. Nevertheless, $f(P_n({x}))$ and $(DP_n)_x(F({x}))$
are scalar multiples of each other. To see this, we invoke the
following fact about conformal maps, which is routine to verify.

\begin{prop}
	Let $M_1$ and $M_2$ be two Riemannian manifolds of the same
	dimension, and let $g_1$ and $g_2$ be their respective
	metrics. Suppose we have a conformal mapping $h: M_1 \to M_2$, and
	let $c : M_1 \to \R$ denote the conformal scaling factor,
	i.e., $h^*g_2 = c^2 g_1$. Then, for any function $w \in
	C^\infty(M_1)$ and $x\in M_1$, we have
	\[ (Dh)_x(\nabla w) = c^2 \cdot \nabla(w \circ h^{-1}). \]
\end{prop}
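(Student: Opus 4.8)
The plan is to verify the identity pointwise, by testing both sides against an arbitrary tangent vector and using the characterization of the gradient as the metric dual of the differential. Recall that on a Riemannian manifold $(M,g)$ the gradient of $\phi\in C^\infty(M)$ is the unique vector field with $g(\nabla\phi,V)=d\phi(V)$ for all tangent vectors $V$. Since a conformal map between manifolds of equal dimension is a local diffeomorphism (its conformal factor $c$ is nowhere zero), $(Dh)_x:T_xM_1\to T_{h(x)}M_2$ is a linear isomorphism, so it suffices to compare the two sides of the asserted equality after pairing with $(Dh)_xV$ for an arbitrary $V\in T_xM_1$.

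First I would use the defining property of the gradient on $M_2$ together with the chain rule:
\[
g_2\big(\nabla(w\circ h^{-1})|_{h(x)},\,(Dh)_xV\big)=d(w\circ h^{-1})_{h(x)}\big((Dh)_xV\big)=dw_x(V)=g_1\big(\nabla w|_x,\,V\big).
\]
Next, writing $\nabla(w\circ h^{-1})|_{h(x)}=(Dh)_xZ$ for the unique $Z\in T_xM_1$ and unwinding the conformality hypothesis $h^*g_2=c^2g_1$ into the statement $g_2\big((Dh)_xA,(Dh)_xB\big)=c(x)^2\,g_1(A,B)$, the left-hand side of the display becomes $c(x)^2\,g_1(Z,V)$. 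Equating, and using that $V$ is arbitrary and $g_1$ is non-degenerate, gives $c(x)^2Z=\nabla w|_x$, hence
\[
(Dh)_x(\nabla w|_x)=c(x)^2(Dh)_xZ=c(x)^2\,\nabla(w\circ h^{-1})|_{h(x)},
\]
which is the claimed formula.

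I do not expect any real obstacle, as the argument is routine linear algebra on each tangent space. The only points worth a moment's care are that the conformal factor $c$ is evaluated at the source point $x\in M_1$ (so it is a scalar that commutes with $(Dh)_x$), and that tracing through the defining relation of the gradient produces the factor $c^2$ rather than $c^{-2}$ — consistent with the conformal scaling factor $\tfrac{2\sqrt{n}r_n}{|x-r_nz_0|^2}$ for $P_n$ recorded in Proposition \ref{prop:P_n-properties}, and with the resulting relation between $f(P_n(x))$ and $(DP_n)_x(F(x))$.
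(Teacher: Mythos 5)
Your argument is correct and is essentially the paper's proof: both test the identity against $(Dh)_xV$ for an arbitrary tangent vector $V$, use the chain rule together with the metric-duality definition of the gradient, and invoke the pullback relation $h^*g_2=c^2g_1$ to extract the factor $c^2$ (your auxiliary vector $Z$ with $(Dh)_xZ=\nabla(w\circ h^{-1})$ is just a cosmetic reorganization of the paper's single chain of equalities). No gaps.
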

\begin{proof}
	Consider any point $x \in M_1$, and its image $y = h(x) \in
	M_2$. Let $\langle \cdot, \cdot \rangle$ denote the natural pairing
	between vectors and $1$-forms. For any $v \in T_x$, we have
	\begin{align*}
	g_2((Dh)_xv, \nabla(w \circ h^{-1})) &= \langle (Dh)_xv, d(w \circ h^{-1}) \rangle = \langle v, dw \rangle = g_1(v, \nabla w) \\
	&= c^{-2} (h^*g_2)(v, \nabla w) = c^{-2} g_2((Dh)_xv, (Dh)_x(\nabla w)).
	\end{align*}
	Since $(Dh)_xv$ ranges over all elements of $T_y$, this implies
	\[ \nabla(w \circ h^{-1}) = c^{-2} (Dh)_x(\nabla w), \]
	which is the desired result upon multiplying both sides by $c^2$.
\end{proof}

\begin{cor} \label{cor:time-change}
	For any ${x} \in \S^2_n$, let $\wt x = P_n({x})$. Then, we have
	\[ (DP_n)_x(F({x})) = \frac{4nr^2_n}{|x - r_nz_0|^4} f(\wt x) = \pi \rho_n(\wt x)^4 f(\wt x). \]
\end{cor}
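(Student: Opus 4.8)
The plan is to combine the conformal-transformation identity for gradients proved immediately above with the relation $u\circ P_n = U + n\log(16\pi)$ recorded earlier in this section, and then to reconcile the two forms of the prefactor by one elementary stereographic computation. It suffices to establish the identity at points $x\notin\L$ (equivalently $\wt x\notin\wt\L$), where $U$ is smooth; near $\L$ both sides blow up.

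First I would apply the preceding proposition with $M_1 = \S^2_n\setminus\{r_n z_0\}$ carrying its spherical metric, $M_2 = H$ carrying the Euclidean metric it inherits from $\R^3$, with $h = P_n$ and $w = U$. By Proposition \ref{prop:P_n-properties} the conformal scaling factor is $c(x) = \frac{2\sqrt{n}\,r_n}{|x-r_n z_0|^2}$, so the proposition gives
\[ (DP_n)_x\big(\nabla_S U(x)\big) = c(x)^2\,\nabla\big(U\circ P_n^{-1}\big)(\wt x) = \frac{4 n r_n^2}{|x-r_n z_0|^4}\,\nabla\big(U\circ P_n^{-1}\big)(\wt x), \]
where $\nabla$ on the right denotes the ordinary Euclidean gradient on $H\cong\R^2$. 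Since $U\circ P_n^{-1}$ and $u$ differ by the additive constant $n\log(16\pi)$ on $H$, they have the same gradient; and by the definition \eqref{eq:planar-F-def} we have $\nabla u = -f$ in the Euclidean sense (the boundary term $\frac1n\frac{x}{1+|x|^2/n}$ is exactly $\nabla\log\rho_n$). Using $F = -\nabla_S U$ then yields
\[ (DP_n)_x\big(F(x)\big) = -\frac{4 n r_n^2}{|x-r_n z_0|^4}\,\nabla u(\wt x) = \frac{4 n r_n^2}{|x-r_n z_0|^4}\,f(\wt x), \]
which is the first claimed equality.

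For the second equality I would record the standard stereographic identity $|x-r_n z_0|^2 = \dfrac{4 r_n^2}{\rho_n(\wt x)^2}$, a one-line computation from $P_n(x) = \sqrt{n}\,P(r_n^{-1}x)$ together with $\rho_n(\wt x)^2 = 1+|\wt x|^2/n$ (alternatively, this follows by comparing the conformal factors of $P_n$ and $P_n^{-1}$). Substituting and using $r_n^2 = \frac{n}{4\pi}$ gives
\[ \frac{4 n r_n^2}{|x-r_n z_0|^4} = \frac{n\,\rho_n(\wt x)^4}{4 r_n^2} = \pi\,\rho_n(\wt x)^4, \]
completing the proof. There is no genuine obstacle here; the only point requiring care is the bookkeeping of conventions, namely keeping $\nabla u$ and $f$ as Euclidean gradients on $\R^2$ while $\nabla_S U$ and $F$ are gradients in the spherical metric, so that the conformal-map proposition applies with the scaling factor from Proposition \ref{prop:P_n-properties}.
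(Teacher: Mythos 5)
Your proposal is correct and follows exactly the route the paper intends: the corollary is stated as an immediate consequence of the preceding conformal-gradient proposition applied with $h=P_n$, $w=U$, the scaling factor $c(x)=\frac{2\sqrt{n}r_n}{|x-r_nz_0|^2}$ from Proposition \ref{prop:P_n-properties}, and the relation $u\circ P_n = U + n\log(16\pi)$, together with $F=-\nabla_S U$ and $f=-\nabla u$. Your added verification that $|x-r_nz_0|^2 = 4r_n^2/\rho_n(\wt x)^2$ and hence $\frac{4nr_n^2}{|x-r_nz_0|^4}=\pi\rho_n(\wt x)^4$ is exactly the elementary computation the paper leaves implicit, and it is carried out correctly.
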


Since $f$ and $(DP_n)_x(F)$ are scalar multiples of each other, they
have the same integral curves up to reparameterization. Let us now
make explicit the change of parameterization.

\begin{prop} \label{prop:time-change}
	Consider any $\wt x\in \R^2$, and let ${x} = P_n^{-1}(\wt x)$. To lighten
	notation, let ${y}_t = Y_{{x}}(t)$. Define
	\eqbn
	\sigma(t) = \pi \int_0^t \rho_n(P_n({y}_s))^4 \,ds \qquad\text{and}\qquad \wt Y_{\wt x}(t) = P_n({y}_{\sigma^{-1}(t)}).
	\eqen
	Then, $\wt Y_{\wt x}(t)$ is an integral curve along $f$ starting at $\wt x$.
\end{prop}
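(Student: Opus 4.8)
The plan is to verify directly that the reparameterized curve $\wt Y_{\wt x}(t) = P_n(y_{\sigma^{-1}(t)})$ satisfies the ODE $\frac{d}{dt}\wt Y_{\wt x}(t) = f(\wt Y_{\wt x}(t))$ with the correct initial condition, using the chain rule together with the scaling relation recorded in Corollary \ref{cor:time-change}. The initial condition is immediate: $\sigma(0) = 0$, so $\sigma^{-1}(0) = 0$ and $\wt Y_{\wt x}(0) = P_n(y_0) = P_n(x) = \wt x$. It also helps to note that $\sigma$ is smooth and strictly increasing (its derivative $\pi \rho_n(P_n(y_s))^4$ is strictly positive), so $\sigma^{-1}$ is well-defined and differentiable wherever $y_s$ is defined, with $(\sigma^{-1})'(t) = 1/\sigma'(\sigma^{-1}(t))$.

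The main computation is the chain rule. Writing $s = \sigma^{-1}(t)$, we have
\[
\frac{d}{dt} \wt Y_{\wt x}(t) = \frac{d}{dt} P_n(y_s) = (DP_n)_{y_s}\!\left( \frac{dy_s}{ds} \right) \cdot (\sigma^{-1})'(t) = (DP_n)_{y_s}\big( F(y_s) \big) \cdot \frac{1}{\pi \rho_n(P_n(y_s))^4},
\]
where I used the defining ODE $\frac{dy_s}{ds} = F(y_s)$ for the integral curve $Y_x$ on the sphere and the formula for $(\sigma^{-1})'$. Now apply Corollary \ref{cor:time-change} with $x$ replaced by $y_s$: it gives $(DP_n)_{y_s}(F(y_s)) = \pi \rho_n(P_n(y_s))^4 \, f(P_n(y_s))$. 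Substituting this in, the factor $\pi \rho_n(P_n(y_s))^4$ cancels, leaving
\[
\frac{d}{dt} \wt Y_{\wt x}(t) = f(P_n(y_s)) = f\big(\wt Y_{\wt x}(t)\big),
\]
which is exactly the assertion that $\wt Y_{\wt x}$ is an integral curve along $f$.

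There is essentially no hard step here — the statement is a bookkeeping exercise in reparameterizing a flow so that a scalar rescaling of the vector field is absorbed. The only points requiring a modicum of care are: (i) confirming that $\sigma$ is a valid change of variables, i.e. that it is a smooth increasing bijection from the maximal time-interval of $Y_x$ onto its image (so that $\wt Y_{\wt x}$ is defined on a genuine interval starting at $0$), which follows since $\rho_n \ge 1$ makes $\sigma'$ bounded below by $\pi$ and the integrand is smooth wherever $Y_x$ avoids $\L$; and (ii) keeping track of which point the differential $DP_n$ is evaluated at when invoking Corollary \ref{cor:time-change}, namely at the moving point $y_s$ rather than at the starting point. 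Neither of these presents any real obstacle, so the proof is short.
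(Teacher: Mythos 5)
Your proof is correct and follows essentially the same route as the paper: chain rule, the derivative formula for $\sigma^{-1}$, and Corollary \ref{cor:time-change} applied at the moving point, plus the (easy) initial-condition and monotonicity checks. Nothing further is needed.
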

\begin{proof}
	The result follows from the calculation
	\begin{align*}
	\frac{d}{dt} \wt Y_{\wt x}(t) &= (DP_n)_x(F({y}_{\sigma^{-1}(t)})) \cdot \frac{d}{dt} \sigma^{-1}(t) = (DP_n)_x(F({y}_{\sigma^{-1}(t)})) \cdot \frac{1}{\sigma'(\sigma^{-1}(t))} \\
	&= (DP_n)_x(F({y}_{\sigma^{-1}(t)})) \cdot \frac{1}{\pi} \rho_n(\wt Y_{\wt x}(t))^{-4} = f(\wt Y_{\wt x}(t)),
	\end{align*}
	where we have used Corollary \ref{cor:time-change} in the last step.
\end{proof}

Finally, we define the planar allocation function $\wt\psi: \R^2 \to \wt{\L}$
by $\wt\psi(\wt x) = (P_n \circ {\psi} \circ P_n^{-1})(\wt x)$. The cells
$\wt\psi^{-1}(z)$ for $z \in \wt{\L}$ will correspond to basins of attraction
under the flow induced by $f$. We can now reduce Theorem
\ref{thm:finiteexpectation} to an analogous statement in terms of the
plane.

\begin{thm}[Planar version of Theorem \ref{thm:finiteexpectation}] \label{thm:main-planar}
	For any $p > 0$ there is a constant $C_p > 0$ such that for $r <
	n^{1/3}$ we have
	\[ \P\left[|\wt\psi(0)| > r\sqrt{{\log n}}\right] \leq C_p r^{-p}. \]
\end{thm}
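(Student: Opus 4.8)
The plan is to follow the heuristic picture from Section~\ref{sec:intro3}: control the duration $\tau_{\wt x}$ of the flow and the speed $|f(\wt Y_{\wt x}(t))|$ along the way, then combine these to bound the total travel distance $|\wt\psi(0)|$. Since we are working at the base point $0$, the metric distortion factor $\rho_n$ is bounded near $0$, so that arc-length in the plane and on the sphere agree up to constants on the relevant scales; this lets us freely translate between $\tau_x$ (spherical time) and the planar picture using Proposition~\ref{prop:time-change}, paying only constant factors as long as the flow stays in a region where $|\wt x| \lesssim \sqrt{n}$ (which is why the hypothesis $r < n^{1/3}$ appears).

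First I would record the exact law of $\tau_x$. By Lemma~\ref{lem:allocation-helper}, for fixed $x$ the event $\{\tau_x > t\}$ has probability $e^{-2\pi t}$ after integrating against the uniform measure — i.e.\ $\tau_x$ is $\mathrm{Exp}(2\pi)$ — so $\P[\tau_x > t] = e^{-2\pi t}$, giving exponential tails for the time parameter with a mean independent of $n$. Next I would establish the key speed estimate, which is the main technical input (and the step I expect to be the main obstacle): for a fixed point $y$ in a bounded region of the plane, and for the truncated force $f^{(\delta)}(y)$ coming only from points of $\wt{\mcl L}$ outside the ball $B(y, \delta)$ with $\delta \asymp 1/\sqrt{\log n}$, one has $\P[\,|f^{(\delta)}(y)| > C\sqrt{\log n}\,] \le \text{(rapidly decaying in } C)$. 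Heuristically $f^{(\delta)}(y) = \sum_{\wt z} \frac{\wt z - y}{|\wt z - y|^2} + (\text{drift})$ is a sum of $n$ nearly-independent contributions each of typical size $O(1/\text{distance})$; since the density of points is $\asymp 1/n$ per unit area, the typical magnitude is $\sqrt{\log n}$ (the $\log$ coming from the logarithmic divergence of $\sum 1/|\wt z - y|$ over an $O(\sqrt n)$-radius disk), and concentration comes from a Bernstein-type / moment bound exploiting the truncation at scale $\delta$ to control the largest single term. This estimate is deferred to Section~\ref{sec:estimates} in the paper, so I would cite it as a black box here.

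With these two ingredients, the argument for large $r$ runs as follows. Suppose the flow from $0$ travels distance $> r\sqrt{\log n}$ before being absorbed. Partition $[0,\tau_0]$ (or rather its planar reparameterization) into segments and cover the traversed path by a strip of width $\delta \asymp 1/\sqrt{\log n}$ and length $\asymp r\sqrt{\log n}$. On the event that $|f^{(\delta)}(\wt Y(t))| \le C\sqrt{\log n}$ uniformly along the path — which by the speed estimate and a union bound over a net of $O(r\log n)$ points fails with probability at most (polynomially small in $r$, after choosing $C$ large) — the only way to accumulate travel distance $> r\sqrt{\log n}$ is either (i) to spend time $\gtrsim r$ flowing, which by the exponential tail of $\tau_0$ has probability $\le e^{-cr}$, or (ii) to pass within distance $\delta$ of some point $w \in \wt{\mcl L}$. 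For case (ii) I would invoke Lemma~\ref{lem:large-force-is-swallowed}: once the flow comes that close to a point of $\wt{\mcl L}$ while the ambient (truncated) force is only $O(\sqrt{\log n})$, that nearby point is with high probability the point of attraction, so the flow terminates there and contributes only $O(\delta) = o(1)$ further travel — contradicting a displacement of $r\sqrt{\log n}$ once $r$ is large. Collecting the failure probabilities — $e^{-2\pi t}$-type tail for $\tau_0$, the polynomially-small net bound for the force, and the ``swallowed'' estimate — gives $\P[|\wt\psi(0)| > r\sqrt{\log n}] \le C_p r^{-p}$ for every $p$ (indeed with room to spare, since the genuine obstruction — no point of $\wt{\mcl L}$ in a strip of area $\asymp r$ — has probability $e^{-cr}$, matching Figure~\ref{fig2}). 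For small $r$ (bounded away from $0$) the bound is trivial after adjusting $C_p$, so it suffices to treat $r$ large.

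The main obstacle is genuinely the speed estimate on $|f^{(\delta)}(y)|$: one must show sub-Gaussian-type concentration for a heavy-tailed sum (the summands $1/|\wt z - y|$ are only in $L^{2-}$ near the singularity, which is exactly why the truncation at scale $\delta$ is essential), and one needs it uniformly over a net fine enough to control the whole flow path, while also handling the deterministic drift term $\frac1n \frac{x}{1+|x|^2/n}$ and the region where $|\wt Y(t)|$ grows up to order $\sqrt n$. Everything else is bookkeeping: reparameterizing via Proposition~\ref{prop:time-change}, converting spherical and planar distances using $\rho_n$, and assembling the union bounds.
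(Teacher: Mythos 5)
Your skeleton matches the paper's strategy in outline (exponential tail for the flow time via Lemma \ref{lem:allocation-helper}, the uniform bound on the truncated force from Section \ref{sec:estimates}, and absorption via Lemma \ref{lem:large-force-is-swallowed}), but the assembly has two concrete gaps. First, the probability bookkeeping does not close with a constant force threshold $C\sqrt{\log n}$: the uniform-over-the-disk force bound (Lemma \ref{lem:main-disk-bound}) costs a union-bound factor of order $r^2$ coming from covering $B(0,r\sqrt{\log n})$, while the per-ball failure probability is a constant $e^{-cC+O(1)}$ once $C$ is fixed, so ``choosing $C$ large'' leaves you with a bound of order $r^{2}e^{-cC}$, which does not decay in $r$ at all. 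To get $r^{-p}$ one must let the threshold and the truncation radius depend on $r$ (the paper takes threshold $r^{1-\eps}\sqrt{\log n}$ and $\delta=1/(r^{1-2\eps}\sqrt{\log n})$), and then the link between long travel and the force is not your strip dichotomy but a mean-value argument: on the event $\{\tau\le r^{\eps}/C_0\}$, covering distance $r\sqrt{\log n}$ in planar time $\le 2\pi\tau$ forces a moment where $|f|\ge \frac{C_0}{2\pi}r^{1-\eps}\sqrt{\log n}$, i.e.\ a large \emph{multiple} of $\max(\xi,1/\delta)$, which is exactly the hypothesis of Lemma \ref{lem:large-force-is-swallowed}. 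Your trigger for that lemma --- merely ``passing within distance $\delta$ of a point of $\wt{\L}$'' --- does not verify its hypothesis: proximity at scale $\delta$ only produces force of order $1/\delta$, not the required factor $((5k)^{k+1}+1)\max(\xi,1/\delta)$.

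Second, and more importantly, the conclusion of Lemma \ref{lem:large-force-is-swallowed} is a dichotomy: either $\wt\psi(x)\in B(x,2\delta)$ \emph{or} $|\wt{\L}\cap B(x,\delta)|>k$. To rule out the second alternative you need a separate high-probability event controlling the number of points of $\wt{\L}$ in every $\delta$-ball of the region (the paper's event $E_1$, established by a net argument with failure probability $O(r^2\delta^{2k}\log n)$), and the integer $k$ must be chosen large depending on $p$ for this to be $O(r^{-p})$. Your proposal omits this point-count control entirely, so even after fixing the threshold issue you cannot conclude that the flow is trapped when the force is large; the ``with high probability the nearby point is the point of attraction'' step is precisely what remains unproved. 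Relatedly, your closing remark that the true obstruction has probability $e^{-cr}$ is only the heuristic of Figure \ref{fig2}; the actual proof (yours or the paper's) only yields polynomial decay of arbitrarily high order, in part because the point-count event decays only polynomially in $r$ for each fixed $k$.
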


\begin{proof}[Proof of Theorem \ref{thm:finiteexpectation} from Theorem \ref{thm:main-planar}]
	By rotational symmetry, we may assume without loss of generality
	that $x = P_n^{-1}(0)$. Proposition \ref{prop:P_n-properties} ensures that we have $|x -
	{\psi}(x)| \le C |\wt\psi(0)|$ for a universal constant $C$. Observe that it is sufficient to prove the theorem for $r<Cn^{1/3}$, since $|x-\psi(x)|<\sqrt{n}$. Theorem \ref{thm:main-planar} gives
	\[ \P\( |x - {\psi}(x)| > r\sqrt{\log n} \) \le \P\( |\wt\psi(0)| > r\sqrt{\log n}/C \) \le C_p (r/C)^{-p}, \]
	which is the desired inequality upon renaming of constants.
\end{proof}

\section{Tail bound for travel distance} \label{sec:finiteexpectation}

In this section, we give the proof of Theorem \ref{thm:main-planar}
following the strategy described in the introduction. For any $\Omega
\subset \R^2$, write
\[ f(x \mid \Omega) := \sum_{y \in \wt{\L} \cap \Omega} f(x, y). \]
The following lemma, whose proof is deferred to Section
\ref{sec:estimates}, gives an upper bound of order $\sqrt{\log n}$ for
the magnitude of $f$ at points not too close to $\wt{\L}$.

\begin{lemma} \label{lem:main-disk-bound}
	There is a constant $c > 0$ such that for any $M_1, M_2, M_3 \ge 1$
	with $M_1 < n^{1/3} / \sqrt{\log n}$, and with $\delta =
	\frac{1}{M_3 \sqrt{\log n}}$, we have
	\[ \P\( \max_{x \in B(0, M_1 \sqrt{\log n})} |f(x \mid \R^2 \setminus B(x, \delta))| > M_2 \sqrt{\log n} \) \le M_1^2 e^{-cM_2/M_3 + O(1)}. \]
\end{lemma}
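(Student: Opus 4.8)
The plan is to fix a point $x \in B(0, M_1\sqrt{\log n})$, control the ``far-field'' force $f(x \mid \R^2 \setminus B(x,\delta))$ via a concentration-of-measure argument, and then pass to a supremum over $x$ using a net. Write $f(x \mid \R^2 \setminus B(x,\delta)) = \sum_{y \in \wt\L} f(x,y)\1_{|x-y|>\delta}$; since the points of $\wt\L$ are i.i.d. from $\mu_n$, this is a sum of $n$ i.i.d. random vectors (plus the deterministic drift term $\tfrac1n \cdot \tfrac{x}{1+|x|^2/n}$, which is $O(M_1\sqrt{\log n}/n)$ and negligible). First I would compute the mean: the expected contribution of a single point is $\int f(x,y)\1_{|x-y|>\delta}\,d\mu_n(y)$, and by the defining property $\Delta_S U = 2\pi\sum\delta_z - 2\pi$ (equivalently, the fact that $u_y$ is harmonic-plus-a-radial-correction and $\mu_n$ is the conformal image of the uniform measure), the $n$-fold mean should be $O(1)$ — the leading singular term $\tfrac{y-x}{|x-y|^2}$ averages against $\mu_n$ to something bounded, and the cutoff at radius $\delta$ only removes mass near $x$ in a rotationally near-symmetric way. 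So $\E[f(x\mid\R^2\setminus B(x,\delta))] = O(1) \ll \sqrt{\log n}$.

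Second, the heart of the matter is a Bernstein/Bennett-type tail bound for the centered sum $S = \sum_i (Z_i - \E Z_i)$ where $Z_i = f(x, y_i)\1_{|x-y_i|>\delta}$. Each summand satisfies the deterministic bound $|Z_i| \le \tfrac{1}{\delta} + O(\tfrac{M_1\sqrt{\log n}}{n}) = O(M_3\sqrt{\log n})$, and the per-coordinate variance is $\Var(Z_i) \le \E|Z_i|^2 \le \int_{|x-y|>\delta} \tfrac{1}{|x-y|^2}\,d\mu_n(y) + O(\cdot)$. Using $d\mu_n/dy = \tfrac{1}{\pi n \rho_n(y)^4}$ and that $|x| = O(M_1\sqrt{\log n}) = o(n^{1/2})$ so $\rho_n$ is bounded on the relevant region, this integral is $\tfrac{1}{\pi n}\int_{\delta < |x-y| < O(\sqrt n)} \tfrac{dy}{|x-y|^2}(1+o(1)) = \tfrac{2}{n}\log\tfrac{O(\sqrt n)}{\delta}(1+o(1)) = O(\tfrac{\log n}{n})$, so $\sum_i \Var(Z_i) = O(\log n)$. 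Bernstein's inequality for bounded vector-valued sums (applied coordinatewise, or via a standard vector version) then gives
\[
\P\big(|S| > t\big) \le \exp\!\Big(-c\min\Big\{\tfrac{t^2}{\log n},\ \tfrac{t}{M_3\sqrt{\log n}}\Big\}\Big).
\]
Taking $t = \tfrac12 M_2\sqrt{\log n}$ (with $M_2$ large enough that this dominates the $O(1)$ mean), the linear term wins: the bound becomes $\exp(-c M_2/M_3)$, which is exactly the per-point estimate we want.

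Third, I would upgrade from a fixed $x$ to the supremum over the disk. The map $x \mapsto f(x \mid \R^2 \setminus B(x,\delta))$ is Lipschitz away from $\wt\L$ with a (random but controllable) constant; more simply, one can use a direct continuity/covering argument. Place an $\eta$-net $\mathcal N$ in $B(0, M_1\sqrt{\log n})$ with $\eta$ polynomially small in $n$ (say $\eta = n^{-10}$), so $|\mathcal N| = O(M_1^2 (\log n) n^{20})$. For $x$ within $\eta$ of a net point $x'$, the difference $|f(x\mid\cdots) - f(x'\mid\cdots)|$ is controlled except on the low-probability event that some point of $\wt\L$ lies very close to $x$ (within, say, $\delta/2$), and on the complement of that event the modulus of continuity of each $f(\cdot, y)$ on its domain, summed over $y$, is bounded by a polynomial in $n$ times $\eta$, hence $\ll \sqrt{\log n}$. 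A union bound over $\mathcal N$ then gives
\[
\P\Big(\max_{x} |f(x\mid\R^2\setminus B(x,\delta))| > M_2\sqrt{\log n}\Big) \le |\mathcal N|\, e^{-c M_2/M_3} + (\text{low-prob. clustering event}) \le M_1^2 e^{-c M_2/M_3 + O(\log\log n) + O(\log n)}.
\]
Absorbing the polynomial-in-$n$ factors from the net and from $|\mathcal N|$ into the $O(1)$ in the exponent as stated requires only that $M_2/M_3$ be allowed to be compared against a constant — but in fact the statement's $e^{-cM_2/M_3 + O(1)}$ with the prefactor $M_1^2$ already has room: we re-run Bernstein with $t = M_2\sqrt{\log n}/C$ for a larger absolute constant $C$ chosen so that the net's $n^{O(1)}$ factor is swallowed (this costs only a constant factor in $c$), leaving the clean form claimed.

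The main obstacle I anticipate is the second step done carefully: getting the variance bound $\sum_i \Var(Z_i) = O(\log n)$ with the right constant, and more importantly verifying that the vector-valued Bernstein inequality applies cleanly despite the non-isotropic, $x$-dependent, and cutoff-truncated nature of the summands — in particular handling the region where $\mu_n$ has non-negligible curvature ($|y| \gtrsim \sqrt n$) and confirming it contributes $O(1)$ to both mean and variance. The net/continuity step (step three) is technically fiddly but routine; the mean computation (step one) reduces to the harmonicity already exploited in Proposition \ref{prop:Delta-u}.
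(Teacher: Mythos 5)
There is a genuine gap in your third step, the passage from a fixed $x$ to the maximum over $B(0,M_1\sqrt{\log n})$. Your per-point Bernstein bound gives a tail of size $e^{-cM_2/M_3}$, and you then union bound over a net of cardinality $n^{O(1)}$ (or larger, once $\eta$ is small enough to make the ``clustering'' event truly negligible). The resulting factor $e^{O(\log n)}$ cannot be absorbed into the stated $e^{-cM_2/M_3+O(1)}$: the lemma must hold (and in the proof of Theorem \ref{thm:main-planar} is genuinely used) in regimes where $M_2/M_3$ is far below $\log n$ --- there $M_2/M_3=r^{\eps}$ with $r$ possibly a large constant or a power of $\log n$, and the target bound $M_1^2e^{-cM_2/M_3+O(1)}$ is small while $n^{O(1)}e^{-cM_2/M_3}$ is vacuous. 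Your proposed fix, rerunning Bernstein with $t=M_2\sqrt{\log n}/C$, only rescales the constant $c$; no constant rescaling swallows a multiplicative $n^{O(1)}$ unless $M_2/M_3\gtrsim\log n$, which is not assumed. (A secondary issue: the probability of the boundary-crossing/clustering event you discard is only polynomially small in $n$, which again need not be below the target bound when $M_2/M_3$ is large; shrinking $\eta$ to fix this inflates the net and worsens the main problem.)

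The paper's proof is organized precisely to avoid a fine-net union bound against the weak tail $e^{-cM_2/M_3}$. It first reduces to disks of radius $\sqrt{\log n}$ (Lemma \ref{l:moving-disk-bound}), with an outer net of only $O(M_1^2)$ points producing the $M_1^2$ prefactor. Inside such a disk it splits the force into far and near contributions. For the far part (Lemma \ref{l:big-disk}) the maximum over the disk is controlled \emph{deterministically} by a Taylor expansion: one bounds $f$ and $D_1f$ at the center and $\max_x|D_2f(x\mid\Omega)|$ over the disk, the latter being a single sum of nonnegative terms so that Lemma \ref{lem:exponential-tail} applies directly --- no fine net, hence no $n^{O(1)}$ loss. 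For the near part (Lemma \ref{l:near-force-bounds}) a $\delta$-net of size $O(\log n/\delta^2)$ is used, but only against a much stronger tail of size $\delta^{ct-O(1)}$ (coming from sharper exponential-moment estimates for points at distance between $\delta$ and $1$), which absorbs the $\delta^{-2}$ cardinality. Your steps one and two (mean $O(1)$ as in Lemma \ref{lem:force-average}, and a Bernstein-type pointwise tail as in Lemma \ref{lem:exponential-tail}) are sound and parallel the paper, but without a mechanism of this two-scale type the supremum step does not go through as stated.
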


The next two lemmas control the behavior of points at which the
magnitude of $f$ is large.

\begin{lemma} \label{lem:star-swallowing-bound}
	Suppose $x \in \R^2$ and $r > 0$, and consider any $w \in \partial
	B(x, r)$. Let $n_w = \frac{1}{r}(w - x)$ denote the outward pointing
	unit normal vector to $\partial B(x, r)$ at $w$. Then, for any $y
	\in B(x, r)$, we have
	\[ \langle f(w, y), n_w \rangle \le -\frac{1}{2r} + \frac{|w|}{n}. \]
\end{lemma}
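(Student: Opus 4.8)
The plan is a direct computation starting from the explicit formula \eqref{eq:planar-F-def}, namely $f(w,y) = \dfrac{y-w}{|w-y|^2} + \dfrac{1}{n}\cdot\dfrac{w}{1+|w|^2/n}$. I would pair this with the unit normal $n_w = \tfrac1r(w-x)$ and estimate the two summands separately.

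For the confining term, since $1 + |w|^2/n \ge 1$ and $|n_w| = 1$, the Cauchy--Schwarz inequality gives $\big\langle \tfrac1n\cdot\tfrac{w}{1+|w|^2/n},\, n_w\big\rangle \le \tfrac1n\,|w| = \tfrac{|w|}{n}$, which accounts for the additive $\tfrac{|w|}{n}$ in the claimed bound.

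For the Newtonian term it then remains to show $\big\langle \tfrac{y-w}{|w-y|^2},\, \tfrac{w-x}{r}\big\rangle \le -\tfrac{1}{2r}$, equivalently $\langle w-y,\, w-x\rangle \ge \tfrac12\,|w-y|^2$. Setting $a = w-x$ (so $|a| = r$) and $b = y-x$ (so $|b|\le r$ because $y\in B(x,r)$), one has $w-y = a-b$, hence $\langle w-y,\,w-x\rangle = r^2 - \langle a,b\rangle$ and $\tfrac12|w-y|^2 = \tfrac12\big(r^2 - 2\langle a,b\rangle + |b|^2\big)$; after cancellation the desired inequality is equivalent to $r^2 \ge |b|^2$, which holds by hypothesis. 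Adding the two bounds yields the lemma.

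The argument is elementary and I do not anticipate a genuine obstacle. The one point worth flagging is that the geometric inequality $\langle w-y,\,w-x\rangle\ge\tfrac12|w-y|^2$ uses the hypothesis $|y-x|\le r$ in an essential way (it fails once $y$ leaves $\overline{B(x,r)}$). This is precisely the mechanism by which a single point of $\wt\L$ lying inside $B(x,r)$ forces the gradient flow to cross $\partial B(x,r)$ inward at every boundary point, which is the feature exploited later in Lemma \ref{lem:large-force-is-swallowed}.
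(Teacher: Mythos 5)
Your proof is correct and follows essentially the same route as the paper: both reduce the Newtonian term to the inequality $\langle w-y,\,w-x\rangle \ge \tfrac12|w-y|^2$, which is verified by the same elementary algebra using $|y-x|\le r$, and both bound the confining term by $|w|/n$ via Cauchy--Schwarz. No gaps.
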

\begin{proof}
	Let $a = y - x$ and $b = w - x$. Note that
	\begin{align*}
	2 \langle y - w, w - x \rangle &= 2 \langle a - b, b \rangle = -2|b|^2 + 2\langle a, b \rangle \\
	&\le -|a|^2 - |b|^2 + 2\langle a, b \rangle = -|a - b|^2 \\
	&= -|y - w|^2.
	\end{align*}
	Thus,
	\begin{align*}
	\langle f(w, y), n_w \rangle &= \frac{1}{r} \left\langle \frac{y - w}{|y - w|^2} + \frac{1}{n} \cdot \frac{w}{1 + \frac{|w|^2}{n}}, w - x \right\rangle \\
	&\le \frac{\langle y - w, x - w \rangle}{r |y - w|^2} + \frac{|w|}{n}\cdot \frac{|w - x|}{r} \le -\frac{1}{2r} + \frac{|w|}{n},
	\end{align*}
	as desired.
\end{proof}

\begin{lemma} \label{lem:large-force-is-swallowed}
	Let $x \in B(0, n^{1/2}) \subset \R^2$ and $\delta \in (0, 1)$ be
	given, and define
	\[ \xi = \sup_{y \in B(x, \delta)} |f(y \mid \R^2 \setminus B(y, \delta))|. \]
	For any positive integer $k < \frac{1}{16} \sqrt{n}$, if $|f(x)| >
	((5k)^{k+1} + 1) \cdot \max(\xi, 1/\delta)$, then either $|\wt{\L} \cap
	B(x, \delta)| > k$ or $\wt\psi(x) \in B(x, 2\delta)$.
\end{lemma}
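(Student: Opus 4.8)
The plan is to show that when $|f(x)|$ is as large as the hypothesis demands, either many points of $\wt\L$ are crowded into $B(x,\delta)$, or $x$ lies in a tight \emph{cluster} of points of $\wt\L$ that traps the gradient flow. I would first split $f(x) = f(x\mid B(x,\delta)) + f(x\mid \R^2\setminus B(x,\delta))$; by the definition of $\xi$ the second term has norm at most $\xi\le\max(\xi,1/\delta)$, so $|f(x\mid B(x,\delta))| > (5k)^{k+1}\max(\xi,1/\delta)$. Assume $m:=|\wt\L\cap B(x,\delta)|\le k$ (otherwise we are in the first case and done) and list these points $p_1,\dots,p_m$ with $r_i:=|p_i-x|$ non-decreasing. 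Since $x\in B(0,\sqrt n)$ and $k<\tfrac1{16}\sqrt n$, the drift part of $f(x\mid B(x,\delta))$ from \eqref{eq:planar-F-def} has norm at most $m|x|/n\le k/\sqrt n<\tfrac18$, so by the triangle inequality $\sum_{i=1}^m 1/r_i > (5k)^{k+1}\max(\xi,1/\delta)-\tfrac18$; in particular $m\ge1$, and note $(5k)^{k+1}\max(\xi,1/\delta)\ge 25$.

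Next I extract a scale gap by pigeonhole. Set $r_{m+1}:=\delta$; I claim there is $j\in\{1,\dots,m\}$ with $r_{j+1}>5k\,r_j$. Indeed, if $r_{i+1}\le 5k\,r_i$ for all $i\le m$, then $\delta=r_{m+1}\le (5k)^m r_1\le (5k)^k r_1$, so $r_1\ge\delta(5k)^{-k}$ and hence $\sum 1/r_i\le m/r_1\le (5k)^{k+1}/(5\delta)\le\tfrac15(5k)^{k+1}\max(\xi,1/\delta)$, contradicting the bound from the previous paragraph. Fix the least such $j$. By minimality $r_{i+1}\le 5k\,r_i$ for $i<j$, so $r_j\le (5k)^{j-1}r_1\le(5k)^{k-1}r_1$; combining with $1/r_1\ge\tfrac1m\sum 1/r_i$ gives, after a short computation, $1/r_j>24\max(\xi,1/\delta)$, and since $r_{j+1}\le\delta$ we also get $\delta>5k\,r_j$. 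Thus $p_1,\dots,p_j$ all lie in $B(x,r_j)$, every other point of $\wt\L$ lies outside $B(x,5k\,r_j)$, and $r_j$ is tiny compared with the far-field scale $\max(\xi,1/\delta)$.

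The third step is the swallowing argument. Pick $\rho=\tfrac{11}{10}r_j$, which lies in $(r_j,r_{j+1})\cap(0,\delta)$, so that $\wt\L\cap B(x,\rho)=\{p_1,\dots,p_j\}$. On $\partial B(x,\rho)$ with outward unit normal $n_w$, write $f(w)=f(w\mid B(x,\rho))+f(w\mid\R^2\setminus B(x,\rho))$. The first piece is the force of $p_1,\dots,p_j$, and Lemma \ref{lem:star-swallowing-bound} (center $x$, radius $\rho$) gives $\langle f(w,p_i),n_w\rangle\le-\tfrac1{2\rho}+\tfrac{|w|}n$ for each $i\le j$, so this piece has inward component at least $\tfrac{j}{2\rho}-\tfrac{j|w|}n\ge\tfrac1{2\rho}-\tfrac{j|w|}n$. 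The second piece comes from the $\le k-1$ remaining points of $\wt\L\cap B(x,\delta)$, each at distance $>5k\,r_j-\rho\ge(5k-2)r_j$ from $w$, contributing total norm at most $\tfrac{k-1}{(5k-2)r_j}<\tfrac1{5r_j}$, together with the points of $\wt\L$ outside $B(x,\delta)$, whose total force at $w$ is $O(\max(\xi,1/\delta))$ — here one uses that $\xi$ is a supremum over ball-centers in $B(x,\delta)$ to transfer the bound from $x$ to the nearby point $w\in B(x,\rho)\subset B(x,\delta/2)$. Since $1/r_j>24\max(\xi,1/\delta)$ dwarfs both the $\tfrac1{5r_j}$ term and the $O(\max(\xi,1/\delta))$ term (and $(j+k)|w|/n<\tfrac14$), the inward pull wins and $\langle f(w),n_w\rangle<0$ on all of $\partial B(x,\rho)$. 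Hence the $f$-flow (equivalently, by Proposition \ref{prop:time-change}, the gravitational flow) started at $x$ never leaves $B(x,\rho)$, a compact region whose only singularities of the potential $u$ are $p_1,\dots,p_j$; a standard gradient-flow argument (as in the proof of Proposition \ref{prop:allocation}, valid whenever $\wt\psi(x)$ is defined, i.e.\ almost surely) forces the flow to converge to one of these points, and $p_i\in B(x,r_j)\subset B(x,\delta)\subset B(x,2\delta)$, so $\wt\psi(x)\in B(x,2\delta)$.

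I expect the main obstacle to be the $\partial B(x,\rho)$ estimate in the third step, and within it the transfer of the far-field bound — which is phrased through $\xi$, anchored at points of $B(x,\delta)$ with their \emph{own} choice of center — to the specific point $w$. This is precisely why $\xi$ is defined as a supremum over centers, why the harmless "$+1$" appears in the hypothesis, and why the size $(5k)^{k+1}$ is needed: it must beat the worst-case blow-up $r_j\le(5k)^{j-1}r_1$ produced by a cluster with no internal gap. The convergence of the trapped gradient flow to a point of $\wt\L$ is a secondary, more routine point, cleanly handled by restricting to configurations for which $\wt\psi$ is defined.
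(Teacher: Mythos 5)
Your proposal is correct and follows essentially the same route as the paper's proof: peel off the far field using $\xi$, run a pigeonhole argument on the $\le k$ ordered distances of $\wt{\L}\cap B(x,\delta)$ to locate a multiplicative gap, and then apply Lemma \ref{lem:star-swallowing-bound} on a circle just beyond the innermost cluster so that the inward pull of order $1/r_j$ dominates the remaining near points, the drift term, and the $O(\max(\xi,1/\delta))$ far field, trapping the flow and forcing $\wt\psi(x)\in B(x,2\delta)$. The differences are cosmetic: the paper pins $\ell_1\le\tfrac15(5k)^{-k}\min(\delta,1/\xi)$ and takes the trapping radius $r=(5k)^j\ell_1$ (rather than your ratio-gap index and $\rho=\tfrac{11}{10}r_j$), and it anchors the outer region at $w$, invoking the definition of $\xi$ with center $w\in B(x,\delta)$ directly — which is exactly the clean resolution of the "transfer from $x$ to $w$" issue you single out as the delicate point.
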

\begin{proof}
	Let $y_1, y_2, \ldots , y_m$ be the points of $\wt{\L} \cap B(x,
	\delta)$, write $\ell_i = |y_i - x|$, and assume without loss of
	generality that the $\ell_i$ are in increasing order. There is
	nothing to prove if $m > k$, so assume henceforth that $m \le k$.

	Note that since $|f(x)| > ((5k)^{k+1} + 1) \cdot
	\max(\xi, 1/\delta)$, we have by the definition of $\xi$ that
	\[ |f(x \mid B(x, \delta))| > (5k)^{k+1} \cdot \max(\xi, 1/\delta). \]
	It follows by the pigeonhole principle that $\ell_1 \le \frac{1}{5}
	\cdot (5k)^{-k} \cdot \min(\delta, 1/\xi)$. Let $j$ be the largest
	index for which $\ell_j < (5k)^j \ell_1$, and let $r = (5k)^j
	\ell_1$. Note that $r \le \delta/2$.

	Now, consider any $w \in \partial B(x, r)$, and let $n_w =
	\frac{1}{r}(w - x)$ denote the outward facing unit normal vector as
	in Lemma \ref{lem:star-swallowing-bound}. We will show that $\langle
	f(w \mid \R^2), n_w \rangle < 0$. To do this, we consider separately
	the contributions from the regions $\R^2 \setminus B(w, \delta)$,
	$B(w, \delta) \setminus B(x, r)$, and $B(x, r)$.

	For the first region, by the definition of $\xi$ (and
	recalling that $r \le \delta/2$), we have
	\begin{equation} \label{eq:swallow-outer}
	|f(w \mid \R^2 \setminus B(w, \delta))| \le \xi.
	\end{equation}
	For the second region, note that for all $i > j$, we have
	\[ \ell_i - r \ge (5k)^{j+1}\ell_1 - (5k)^j\ell_1 \ge 4kr, \]
	which implies
	\begin{align}
	|f(w \mid B(w, \delta) \setminus B(x, r))| &\le \sum_{i = j+1}^m |f(w, y_i)| \le \sum_{i = j+1}^m \( \frac{1}{\ell_i - r} + \frac{|w|}{n} \) \nonumber \\
	&\le \frac{1}{4r} + \frac{k|w|}{n}. \label{eq:swallow-middle}
	\end{align}
	Finally, for the last region we have by Lemma
	\ref{lem:star-swallowing-bound} that
	\begin{equation} \label{eq:swallow-inner}
	\langle f(w \mid B(x, r)), n_w \rangle = \left\langle \sum_{i = 1}^j f(w, x_i), n_w \right\rangle \le -\frac{j}{2r} + \frac{j \cdot |w|}{n} \le -\frac{1}{2r} + \frac{k|w|}{n}.
	\end{equation}
	Combining \eqref{eq:swallow-outer}, \eqref{eq:swallow-middle}, and
	\eqref{eq:swallow-inner}, we see that
	\begin{align*}
	\langle f(w \mid \R^2), n_w \rangle &\le \(-\frac{1}{2r} + \frac{k|w|}{n}\) + \(\frac{1}{4r} + \frac{k|w|}{n}\) + \xi \\
	&= -\frac{1}{4 \cdot (5k)^j \ell_1} + \frac{2k|w|}{n} + \xi \\
	&\le -\frac{5}{4}\max(\xi, 1/\delta) + \frac{2k|w|}{n} + \xi \\
	&\le -\frac{1}{4\delta} + \frac{2k|w|}{n} < 0. \\
	\end{align*}

	Since this holds for all $w \in \partial B(x, r)$, it follows that
	no integral curves of $f$ may escape $B(x, r)$. Consequently, we
	must have $\wt\psi(x) \in B(x, r) \subset B(x, 2\delta)$ as desired.
\end{proof}

We are now ready to prove Theorem \ref{thm:main-planar}.

\begin{proof}[Proof of Theorem \ref{thm:main-planar}]
	Note that it is enough to prove the result for sufficiently large
	$n$. We will establish the desired bound by considering the
	probabilities of three events.

	Given $p>0$ choose $k\in\{2,3,\dots \}$ and $\ep>0$ such that
	$2(k-1)-4\ep k>p$. Throughout the proof all implicit constants may
	depend on $p,k$, and $\ep$. Define $r'=r^{1-\ep}$ and
	$r''=r^{1-2\ep}$. Let $\delta = \frac{1}{r'' \sqrt{\log n}}$, and
	define the event
	\[ E_1 = \bigcap_{x\in B(0,r \rln)} \{ |\wt{\L} \cap B(x, \delta)|\leq k \}. \]
	Consider a $(\delta/2)$-net $S \subset B(0, r \sqrt{\log n})$ of
	size $O\(\frac{r^2 \log n}{\delta^2}\)$. Then,
	\begin{align}
	\P(E_1^c) &\le \sum_{s \in S} \P\( |\wt{\L} \cap B(s, 2\delta)| > k \)
	= O\(\frac{r^2 \log n}{\delta^2}\) \cdot O(\delta^{ 2(k+1) }) \nonumber \\
	&= O(r^2 \delta^{2k} \log n) = O\(\frac{r^2}{(r'')^{2k} (\log n)^{k-1}} \) \le O(r^{-p}). \label{eq:E_1-bound}
	\end{align}

	Next, let
	\[ E_2 = \left\{\max_{x \in B(0, 2r \sqrt{\log n})} |f(x \mid \R^2 \setminus B(x, \delta))| \le r' \sqrt{\log n} \right\}. \]
	According to Lemma \ref{lem:main-disk-bound}, we have
	\begin{equation} \label{eq:E_2-bound}
	\P(E_2^c) \le 4r^2 e^{-cr'/r'' + O(1)} = O\(e^{-cr^\ep/2}\) = O(r^{-p}).
	\end{equation}

	Finally, we define an event relating to the ``time traveled'' along
	integral curves of $F$. Recall the notation $Y_z(t)$ for
	the integral curve along $F$ starting at $z \in \S^2_n$. Let
	$\tau$ denote the largest time for which $Y_z(t)$ is defined
	for all $t \in (0, \tau)$; we have (almost surely) that
	$\psi(z) = Y_z(\tau)$. For $C_0:=2\pi ( (5k)^{k+1}+1 )$ define the event
	\eqbn
	E_3 = \left\{ \tau \le \frac{r^\ep}{C_0 } \right\}.
	\eqen
	According to Lemma \ref{lem:allocation-helper}, we have
	\begin{equation} \label{eq:E_3-bound}
	\P(E_3^c) \le \exp\( -2\pi \cdot \frac{r^\ep}{C_0} \) = O(r^{-p}).
	\end{equation}

	Suppose now that $E_1$, $E_2$, and $E_3$ all hold. We claim that in this
	case $|\wt\psi(0)| \le r\sqrt{\log n}$. Indeed, suppose instead that
	$|\wt\psi(0)| > r\sqrt{\log n}$.

	Let $\wt Y_0(t)$ and $\sigma$ be defined as in Proposition
	\ref{prop:time-change}, i.e., $\wt Y_0(t)$ is the integral curve along
	$f$ starting at $0 \in \R^2$, and it is related to $Y_z$ by
	\[ \wt Y_0(t) = P_n(Y_z(\sigma^{-1}(t))). \]
	Since $|\wt Y_0(0)| = 0$ and $|\wt Y_0(\sigma(\tau))| = |\wt\psi(0)|$, it then
	follows by the intermediate value theorem that there must be some
	minimal time $t' \in (0, \tau)$ for which $\wt Y_0(\sigma(t')) \in
	\partial B(0, r\sqrt{\log n})$.

	Note that from the definition of $\sigma$ in Proposition
	\ref{prop:time-change}, we have
	\[ \sigma(t')
	= \pi \int_0^{t'} \rho_n(P_n(Y_z(s)))^4 \,ds
	= \pi \int_0^{t'} \rho_n(\wt Y_0(\sigma(s)))^4 \,ds. \]
	Since $|\wt Y_0(\sigma(s))| < r\sqrt{\log n} < n^{1/3}\sqrt{\log n}$ for
	all $s < t'$, the integrand is bounded above by $2$ for sufficiently
	large $n$. Consequently, we have
	\[ \sigma(t') \le 2\pi t'. \]
	Then, by a version of the mean value theorem, we must have for some
	$s \in (0, t')$ that
	\begin{align}
	|f(\wt Y_0(\sigma(s)))| &\ge \frac{1}{\sigma(t')} |\wt Y_0(\sigma(t')) - \wt Y_0(0)| \nonumber \\
	&\ge \frac{r \sqrt{\log n}}{2\pi t'} \ge \frac{r \sqrt{\log n}}{2\pi \tau} \ge \frac{C_0}{2\pi} r' \sqrt{\log n}, \label{eq:F-mean-value-lower-bound}
	\end{align}
	where in the last step we have used the assumption that $E_3$ holds.

	Our next goal is to apply Lemma \ref{lem:large-force-is-swallowed}
	with $x = \wt Y_0(\sigma(s))$. First, we must establish that
	the hypothesis holds. Note that since $E_2$ holds, we have
	\[ \xi := \sup_{y \in B(x, \delta)} |f(y \mid \R^2 \setminus B(y, \delta))| \le r' \sqrt{\log n}. \]
	Then, \eqref{eq:F-mean-value-lower-bound} gives
	\[ |f(x)| \ge \frac{C_0}{2\pi} r' \sqrt{\log n} \ge \frac{C_0}{2\pi} \max(\xi, 1/\delta), \]
	verifying the hypothesis for Lemma \ref{lem:large-force-is-swallowed}.

	Then, we must either have $|\wt{\L} \cap B(x, 2\delta)| \le k$ or
	$\wt\psi(x) \in B(x, 2\delta)$. However, the first statement
	contradicts the assumption that $E_1$ holds, while the second
	statement contradicts $\wt\psi(x) = \wt\psi(0) \not\in B(0, r\sqrt{\log
		n})$. Thus, we conclude that whenever $E_1$, $E_2$, and $E_3$ all
	hold, then $|\wt\psi(0)| \le r\sqrt{\log n}$. In other words, we have
	using \eqref{eq:E_1-bound}, \eqref{eq:E_2-bound}, and
	\eqref{eq:E_3-bound} that
	\begin{align*}
	\P\( |\wt\psi(0)| > r\sqrt{\log n} \) \le \P(E_1^c) + \P(E_2^c) +
	\P(E_3^c) \le O(r^{-p}),
	\end{align*}
	as desired.
\end{proof}

\section{Tail bound for gravitational force}
\label{sec:estimates}
The goal of this section is to prove Lemma \ref{lem:main-disk-bound}. In fact, we will prove the closely related bound given by Lemma \ref{l:moving-disk-bound} below, from which Lemma \ref{lem:main-disk-bound} follows easily.

\begin{lemma} \label{l:moving-disk-bound}
	There is a constant $c>0$ such that for any $M \ge 1$ and $z \in
	B(0, n^{1/3})$, and with $\delta = \frac{1}{M \sqrt{\log
			n}}$, we have
	\[ \P\( \max_{x \in B(z, \sqrt{\log n})} |f(x \mid \R^2 \setminus B(x, \delta))| > t M \sqrt{\log n} \) \le e^{-ct + O(1)}. \]
\end{lemma}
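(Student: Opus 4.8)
The plan is to bound $|f(x \mid \R^2 \setminus B(x,\delta))|$ for a fixed $x$ by decomposing $\R^2 \setminus B(x,\delta)$ into dyadic annuli $A_j = B(x, 2^{j}\delta) \setminus B(x, 2^{j-1}\delta)$, and then to upgrade the pointwise bound to a uniform bound over $x \in B(z, \sqrt{\log n})$ by a net argument combined with a Lipschitz estimate for $f$. For a single annulus $A_j$, each point $y \in \wt{\L} \cap A_j$ contributes a force of magnitude at most $\frac{1}{|x-y|} + \frac{|y|}{n} \lesssim 2^{-j}/\delta + O(n^{-1/2})$ (using $|y| \lesssim 2^j \delta + |z| \lesssim \sqrt{\log n} + n^{1/3}$, so the second term is negligible compared to $\sqrt{\log n}$ when summed appropriately). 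The number of points of $\wt{\L}$ in $A_j$ is a sum of $n$ i.i.d.\ Bernoulli variables with total mean $\mu_n(A_j) \cdot n = O(2^{2j}\delta^2)$, since $\mu_n$ has density $\frac{1}{\pi n \rho_n^4} \le \frac{1}{\pi n}$ and $A_j$ has area $O(2^{2j}\delta^2)$; because $f$ is a \emph{vector} sum with cancellation, I would actually want a bound on $|\sum_{y \in \wt{\L} \cap A_j} f(x,y)|$, but it suffices to crudely bound this by $\sum_{y} |f(x,y)| \le (2^{-j+1}/\delta + O(n^{-1/2})) \cdot |\wt{\L} \cap A_j|$.

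The key probabilistic input is a Chernoff/Poisson tail bound: for a Binomial$(n, q)$ random variable $N_j := |\wt{\L} \cap A_j|$ with mean $\lambda_j = O(2^{2j}\delta^2)$, one has $\P(N_j \ge s) \le e^{-s}$ for $s \gg \lambda_j$, and more precisely $\P(N_j \ge a \lambda_j) \le (e/a)^{a\lambda_j}$. Summing the forces, $|f(x \mid \R^2 \setminus B(x,\delta))| \le \sum_{j \ge 1} \frac{2}{2^{j}\delta} N_j + O(1)$ (the tail of annuli $j$ with $2^{j}\delta \gtrsim \sqrt{n}$ needs to be handled separately but contributes $o(1)$). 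To exceed $tM\sqrt{\log n} = t/\delta$ one needs $\sum_j 2^{-j} N_j \gtrsim t$. I would split into a "bulk" regime — annuli with $2^j\delta \lesssim 1$, i.e.\ $2^j \lesssim M\sqrt{\log n}$, where $\lambda_j = O(2^{2j}\delta^2)$ is at most $O(1)$ — and allocate a geometrically decaying budget: require $2^{-j}N_j \le c_j t$ with $\sum_j c_j \le 1$, e.g.\ $c_j = c 2^{-j/2}$ or similar, and apply the Poisson tail to each. A union bound over $j$ then gives $\P(\sum_j 2^{-j}N_j > t) \le \sum_j e^{-c 2^{j/2} t + O(1)} \le e^{-ct + O(1)}$. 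The geometric decay in $j$ of both the allocated budget and the means ensures the sum over $j$ converges and is dominated by its first term, yielding the clean bound $e^{-ct+O(1)}$.

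For the uniform bound over $x \in B(z,\sqrt{\log n})$: away from $\wt{\L}$, the function $x \mapsto f(x \mid \R^2 \setminus B(x,\delta))$ is not Lipschitz with a good constant because moving $x$ changes the excluded disk $B(x,\delta)$. The standard fix is to pass to $f(x \mid \R^2 \setminus B(x', \delta/2))$ for $x'$ in a $(\delta/2)$-net: if $|x - x'| \le \delta/2$ then $B(x,\delta) \supseteq B(x', \delta/2)$, so $|f(x \mid \R^2\setminus B(x,\delta))| \le |f(x \mid \R^2 \setminus B(x',\delta/2))|$, and on $\R^2 \setminus B(x',\delta/2)$ the force is smooth with gradient controlled by $\sum_{y} |y - x'|^{-2}$, which I bound via the same dyadic decomposition. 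The net has size $O\big(\frac{\log n}{\delta^2}\big) = O(M^2 (\log n)^2)$, a polynomial factor that is absorbed into the $e^{-ct + O(1)}$ provided $t$ is at least a constant multiple of $\log\log n$ — and for smaller $t$ the bound $e^{-ct+O(1)}$ is trivially $\ge 1$ after adjusting constants, so nothing is lost. I expect the \textbf{main obstacle} to be bookkeeping the polynomial net/annulus factors against the exponential tail with a \emph{single} clean constant $c$ independent of $M$, $z$, and $t$, and ensuring the contribution from far-away annuli (where $\rho_n$ decay and the $|y|/n$ drift term both matter) is genuinely negligible; these are the points where the crude $\sum|f(x,y)|$ bound must be shown not to be too lossy.
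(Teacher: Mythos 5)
There is a genuine gap, and it is at the exact point you flagged as harmless: replacing the vector sum by the sum of absolute values. The cancellation in the vector sum is not a convenience here, it is the entire content of the lemma. The expected value of your majorant is
\[
\E \sum_{y \in \wt{\L}} |f(x,y)| \;\asymp\; n\int \frac{d\mu_n(y)}{|x-y|} \;\asymp\; \int_\delta^{\sqrt n} \frac{1}{r}\, r\,dr \;\asymp\; \sqrt n ,
\]
whereas the quantity to be bounded is typically only of order $\sqrt{\log n}$ (its variance is $\asymp \int_\delta^{\sqrt n} r^{-2}\, r\,dr \asymp \log n$, precisely because the annulus-by-annulus contributions are centered). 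Concretely, in your notation the dyadic annuli with $1 \lesssim 2^j\delta \lesssim \sqrt n$ each contribute $\E[2^{-j}N_j] \asymp 2^j\delta^2$ in absolute value, summing to $\Theta(\sqrt n\,\delta) \gg t$ for any fixed $t$; so the event $\sum_j 2^{-j}N_j \gtrsim t$ that you try to exclude actually holds with probability close to $1$, and no allocation of budgets $c_jt$ can rescue a bound of the form $e^{-ct+O(1)}$ with constants uniform in $n$. Your restriction of the ``bulk'' to $2^j\delta \lesssim 1$ silently drops this intermediate range, and it is not $o(1)$. A second instance of the same problem is the drift term: $\frac1n\cdot\frac{x}{1+|x|^2/n}$ summed over all $n$ points is of order $|z|$, which can be as large as $n^{1/3} \gg \sqrt{\log n}$; it is cancelled only against the mean of the attraction terms (indeed $\bar f(z\mid\R^2)=0$ by rotational symmetry), never in absolute value.

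The paper's proof is built around exactly this centering. For the far field it bounds $f(x\mid\Omega)-\bar f(\cdot\mid\Omega)$ by an exponential tail for centered sums (the $|Y-\E Y|$ estimate of Lemma \ref{lem:exponential-tail}, whose exponent involves $\int |g|^2\,d\mu_n$ rather than $\int|g|\,d\mu_n$ --- this is where the variance $\log n$ rather than the first moment $\sqrt n$ enters), and it controls the means $\bar f$ and $\bar{D_1f}$ separately by exploiting the exact cancellation $\bar f(z\mid\R^2)=0$ (Lemmas \ref{lem:force-average} and \ref{lem:force-deriv-average}); uniformity in $x$ over the ball of radius $\sqrt{\log n}$ is obtained there by a Taylor expansion with tail bounds on $D_1f$ and $D_2f$ (Lemma \ref{l:big-disk}) rather than a net. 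Your net-plus-monotonicity device for the near field (points within distance $O(1)$, where absolute-value bounds and Poisson-type tails do suffice) is sound and close in spirit to the paper's Lemma \ref{l:near-force-bounds}, but without reintroducing cancellation for the contributions from distances between $O(1)$ and $O(\sqrt n)$ the argument cannot yield the stated bound.
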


\begin{proof}[Proof of Lemma \ref{lem:main-disk-bound} from Lemma \ref{l:moving-disk-bound}]
	Let $S \subset B(0, M_1 \sqrt{\log n})$ be a $\sqrt{\log n}$-net
	with $|S| = O(M_1^2)$. For each $z \in S$, we apply Lemma
	\ref{l:moving-disk-bound} to the disk of radius $\sqrt{\log n}$
	centered at $z$ with $M = M_3$ and $t = M_2/M_3$. Taking a union
	bound, we obtain
	\[ \P\( \max_{x \in B(0, M_1 \sqrt{\log n})} |f(x \mid \R^2 \setminus B(x, \delta))| > M_2 \sqrt{\log n} \) \le M_1^2 \cdot e^{-cM_2/M_3 + O(1)}, \]
	as desired.
\end{proof}

Throughout the section, we will often consider separately the effects
of points in $\wt{\L}$ within various regions. To this end, it is
convenient to extend the notation $f(x \mid \Omega)$ introduced
earlier to more general functions: for any function $H: \R^2 \times
\R^2 \to \R^k$, we write
\[ H(x \mid \Omega) := \sum_{y \in \wt{\L} \cap \Omega} H(x, y) \]
\[ \bar{H}(x \mid \Omega) := \E[ H(x \mid \Omega)] = n \int_\Omega H(x, y) \,d\mu_n(y). \]
The proof of Lemma \ref{l:moving-disk-bound}, given in Section
\ref{subsec:moving-disk-bound-proof}, uses a series of lemmas which
will occupy the remainder of this section.

\subsection{Basic estimates} \label{subsec:basic-estimates}

We first collect some basic estimates that will be used
repeatedly. Let $D_1f(x, y)$ denote the Hessian of $u_y(x)$, and let
$D_2f(x, y)$ denote the tensor of third partials of $u_y(x)$ (we may
regard $D_1f$ and $D_2f$ as elements of $\R^4$ and $\R^8$,
respectively). The following lemma follows from direct calculation
using the formula \eqref{eq:planar-F-def} for $f$.
\begin{lemma} \label{lem:basic-estimates}
	For $x \in B(0, \sqrt{n})$ and any $y$, we have the bounds
	\begin{align}
	|f(x, y)| &\le O \( \frac{1}{|x - y|} \) + O\( \frac{|x|}{n} \) \\
	|D_1f(x, y)| &\le O\( \frac{1}{|x - y|^2} \) + O\( \frac{1}{n} \) \\
	|D_2f(x, y)| &\le O\( \frac{1}{|x - y|^3} \) + O\( \frac{|x|}{n^2} \)
	\end{align}
\end{lemma}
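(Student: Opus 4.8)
The plan is to decompose $f$ into its logarithmic-kernel part and its confining part: using the formula \eqref{eq:planar-F-def}, write $f(x,y) = f_1(x,y) + f_2(x)$ with $f_1(x,y) = (y-x)/|x-y|^2 = -\nabla_x\log|x-y|$ and $f_2(x) = \frac{x}{n+|x|^2} = \nabla_x\log\rho_n(x)$, and estimate the two pieces together with their $x$-derivatives separately. Since $D_1 f$ (the Hessian of $u_y$) and $D_2 f$ (the third partials of $u_y$) are, up to sign, the first and second derivatives in $x$ of $f = -\nabla_x u_y$, and since $f_2$ does not depend on $y$, these quantities split as the corresponding derivatives of $f_1$ plus those of $f_2$; it therefore suffices to estimate each summand. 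The whole argument is elementary differentiation, and the one point requiring care is to use the hypothesis $|x|\le\sqrt n$ (equivalently $|x|^2/n\le 1$) to absorb the extra powers of $|x|$ produced when differentiating $f_2$.

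For $f_1$: as a function of the difference $x-y$, the vector $f_1=-\nabla_x\log|x-y|$ is homogeneous of degree $-1$, so its $j$-th order $x$-derivative is homogeneous of degree $-1-j$. Hence $|f_1(x,y)| = |x-y|^{-1}$, and the Hessian and third-derivative tensor of $\log|x-y|$ have magnitudes $O(|x-y|^{-2})$ and $O(|x-y|^{-3})$ respectively. Equivalently, one reads this off the explicit formulas: $\partial_{x_j}\big((y-x)_i/|x-y|^2\big) = -\delta_{ij}|x-y|^{-2} + 2(y-x)_i(y-x)_j|x-y|^{-4}$, a sum of terms of size $O(|x-y|^{-2})$, and one further differentiation produces finitely many terms of size $O(|x-y|^{-3})$.

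For $f_2$: writing $f_2(x) = x/(n+|x|^2)$, its first $x$-derivative is a sum of terms of the shapes $(n+|x|^2)^{-1}$ and $|x|^2(n+|x|^2)^{-2}$, and its second $x$-derivative a sum of terms of the shapes $|x|(n+|x|^2)^{-2}$ and $|x|^3(n+|x|^2)^{-3}$. Using $n+|x|^2\ge n$ and, wherever a power of $|x|$ survives, $|x|^2\le n$, we obtain $|f_2(x)| \le |x|/n$, $|D_1 f_2(x)| = O(1/n)$, and $|D_2 f_2(x)| = O(|x|/n^2)$.

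Combining the $f_1$ and $f_2$ estimates via the triangle inequality gives the three asserted bounds. There is no real obstacle here; the only thing to watch is the bookkeeping of the rational-function derivatives of $f_2$, namely checking that after using $|x|\le\sqrt n$ the surviving power of $|x|$ in each numerator matches the exponent in the corresponding stated estimate.
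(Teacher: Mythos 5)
Your proof is correct and is essentially the argument the paper intends: the paper dismisses this lemma as a ``direct calculation'' from \eqref{eq:planar-F-def}, and your decomposition into the logarithmic kernel $(y-x)/|x-y|^2$ (handled by degree $-1-j$ homogeneity) plus the $y$-independent term $x/(n+|x|^2)$ (handled by elementary differentiation and $n+|x|^2\ge n$) simply carries that calculation out, with the bookkeeping of powers of $|x|$ done correctly.
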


We also give here a general exponential tail bound which will be used
repeatedly.
\begin{lemma} \label{lem:exponential-tail}
	Suppose $g:\Omega\to [-1,1]^k$ for some $\Omega \subset \R^2$. Let
	$\wt{\L}$ be a set of $n$ points drawn independently from $\mu_n$, and
	let $Y = \sum_{z \in \wt{\L} \cap \Omega} g(z)$. Then,
	\begin{equation} \label{eq:|Y|-tail}
	\log \P\( |Y| \ge t \) \le - \frac{1}{k} \cdot t + \log(2k) + 2 n \int_\Omega |g(z)| \,d\mu_n(z),
	\end{equation}
	\begin{equation} \label{eq:|Y-EY|-tail}
	\log \P\( |Y - \E Y| \ge t \) \le - \frac{1}{k} \cdot t + \log(2k) + 2 n \int_\Omega |g(z)|^2 \,d\mu_n(z).
	\end{equation}
\end{lemma}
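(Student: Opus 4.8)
The plan is a routine Chernoff-bound argument carried out one coordinate at a time. Write $g = (g_1,\dots,g_k)$ and $Y = (Y_1,\dots,Y_k)$ with $Y_j = \sum_{z \in \wt{\L} \cap \Omega} g_j(z)$. Since $|Y| \le \sum_{j=1}^k |Y_j|$ and $|Y - \E Y| \le \sum_{j=1}^k |Y_j - \E Y_j|$, a union bound reduces both claims to one-coordinate statements: it suffices to show, for each fixed $j$ and with $s := t/k$, that $\P(|Y_j| \ge s) \le 2\exp(-s + 2n \int_\Omega |g_j|\,d\mu_n)$ and $\P(|Y_j - \E Y_j| \ge s) \le 2\exp(-s + n \int_\Omega g_j^2\,d\mu_n)$. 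Summing these over $j = 1,\dots,k$ produces the prefactor $2k$ and the $1/k$ in front of $t$, and one finishes by bounding $\int_\Omega |g_j|\,d\mu_n \le \int_\Omega |g|\,d\mu_n$ and $\int_\Omega g_j^2\,d\mu_n \le \int_\Omega |g|^2\,d\mu_n$ (the latter with a harmless factor of $2$ to spare).

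To prove the one-coordinate bounds I first rewrite $Y_j = \sum_{i=1}^n \tilde g_j(Z_i)$, where $Z_1,\dots,Z_n$ are i.i.d.\ with law $\mu_n$ and $\tilde g_j := g_j \1_\Omega : \R^2 \to [-1,1]$ (set to $0$ off $\Omega$). This makes $Y_j$ a genuine sum of $n$ i.i.d.\ bounded variables and removes the nuisance that $|\wt{\L} \cap \Omega|$ is itself random. Now apply the exponential Markov inequality with parameter $\lambda = 1$: the elementary inequality $e^x \le 1 + x + x^2$, valid for $|x| \le 1$, together with $1 + u \le e^u$, gives $\E[e^{\tilde g_j(Z_1)}] \le 1 + \E[\tilde g_j(Z_1)] + \E[\tilde g_j(Z_1)^2] \le \exp(\E[\tilde g_j(Z_1)] + \E[\tilde g_j(Z_1)^2])$, hence by independence $\E[e^{Y_j}] \le \exp(n\E[\tilde g_j(Z_1)] + n\E[\tilde g_j(Z_1)^2])$.

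For the first bound, use $\E[\tilde g_j(Z_1)] \le \E[|\tilde g_j(Z_1)|]$ and $\E[\tilde g_j(Z_1)^2] \le \E[|\tilde g_j(Z_1)|]$ (the latter since $|\tilde g_j| \le 1$), so that $\P(Y_j \ge s) \le e^{-s}\E[e^{Y_j}] \le \exp(-s + 2n\int_\Omega |g_j|\,d\mu_n)$; running the same computation with $\tilde g_j$ replaced by $-\tilde g_j$ gives the lower tail, and adding the two tails yields the claimed bound on $\P(|Y_j| \ge s)$. For the centered bound I keep the mean: since $\E Y_j = n\E[\tilde g_j(Z_1)]$, we get $\E[e^{Y_j - \E Y_j}] = e^{-\E Y_j}\E[e^{Y_j}] \le \exp(n\E[\tilde g_j(Z_1)^2]) = \exp(n\int_\Omega g_j^2\,d\mu_n)$, whence $\P(Y_j - \E Y_j \ge s) \le \exp(-s + n\int_\Omega g_j^2\,d\mu_n)$, and again $-\tilde g_j$ handles the lower tail.

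There is no genuine obstacle here; this is a standard concentration estimate and the write-up is essentially bookkeeping. The two points that deserve a moment of care are (i) the indicator trick $\tilde g_j = g_j\1_\Omega$, which is what lets the exponential moment factorize exactly over the $n$ sample points, and (ii) the choice $\lambda = 1$ rather than an optimized parameter, which is exactly what makes the exponent linear in $t$ — with the second moment $\E[\tilde g_j(Z_1)^2]$, rather than a variance-type quantity under a square root, appearing — as demanded by the statement.
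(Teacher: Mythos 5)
Your proposal is correct and takes essentially the same route as the paper: a coordinatewise Chernoff bound with the un-optimized parameter $\lambda=1$, elementary bounds on the one-point exponential moment (the paper uses $e^s-1\le 2|s|$ and $e^s-s-1\le 2s^2$ for $|s|\le 1$, you use $e^x\le 1+x+x^2$ plus $\E[\tilde g_j^2]\le\E|\tilde g_j|$, which is the same bookkeeping), and a union bound over the $k$ coordinates and both signs at threshold $t/k$, producing the $\log(2k)$ and $-t/k$ terms. No gaps.
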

\begin{remark}
	We will only use Lemma \ref{lem:exponential-tail} for $k \le 4$.
\end{remark}
\begin{proof}
	Write $g(x) = (g_1(x), g_2(x), \ldots , g_k(x))$, and let $x_1,
	\ldots , x_n$ be the points of $\wt{\cL}$. For $1 \le i \le k$ and $1 \le j
	\le n$, let $Z_{ij} = \1_{x_j \in \Omega} g_i(x_j)$. We use the
	inequalities
	\[ e^s \le 1 + 2s, \qquad e^s \le 1 + s + 2s^2 \]
	for $s \in [-1, 1]$. Since $|g_i(x_j)| \le 1$, we obtain
	\begin{align*}
	\E[e^{Z_{ij}}] &= 1 + \int_\Omega (e^{g_i(z)} - 1) \,d\mu_n(z) \le 1 + 2\int_\Omega |g_i(z)| \,d\mu_n(z) \\
	&\le \exp\( 2 \int_\Omega |g_i(z)| \,d\mu_n(z) \)\\
	\E[e^{Z_{ij} - \E Z_{ij}}] &= 1 + \int_\Omega (e^{g_i(z)} - g_i(z) - 1) \,d\mu_n(z) \le 1 + 2\int_\Omega |g_i(z)|^2 \,d\mu_n(z) \\
	&\le \exp\( 2 \int_\Omega |g_i(z)|^2 \,d\mu_n(z) \).
	\end{align*}
	Letting $Y_i$ denote the $i$-th coordinate of $Y$, summing the above
	bounds over all $j$ and using Markov's inequality yields
	\[ \log \P\( Y_i \ge t \) \le -t + 2n \int_\Omega |g(z)| \,d\mu_n(z) \]
	\[ \log \P\( Y_i - \E Y_i \ge t \) \le -t + 2n \int_\Omega |g(z)|^2 \,d\mu_n(z). \]

	The above inequalities also apply for $Y_i$ replaced with
	$-Y_i$. Union bounding over $1 \le i \le k$ and both choices of
	signs, and using the inequalities $|Y| \le \sum_{i = 1}^k |Y_i|$ and
	$|Y - \E Y| \le \sum_{i = 1}^k |Y_i - \E Y_i|$, we obtain
	\eqref{eq:|Y|-tail} and \eqref{eq:|Y-EY|-tail}, as desired.
\end{proof}

\subsection{Bounds of averages}
\begin{lemma} \label{lem:force-average}
	Consider a point $z \in B(0, n^{1/2})$ and a radius $R \le
	n^{1/2}$. Let $\Omega = \R^2 \setminus B(z, R)$. Then,
	\begin{align*}
	|\bar{f}(z \mid \Omega)| &= O(R).
	\end{align*}
\end{lemma}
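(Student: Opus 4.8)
The plan is to reduce the estimate over $\Omega=\R^2\setminus B(z,R)$ to an estimate over the small disk $B(z,R)$, by exploiting the fact that the \emph{total} average force vanishes, $\bar f(z\mid\R^2)=0$. Granting this, $\bar f(z\mid\Omega)=-\bar f(z\mid B(z,R))$, and a crude bound on the integrand of $\bar f(z\mid B(z,R))$ finishes the proof.

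To establish $\bar f(z\mid\R^2)=0$, I would note first that $\bar f(z\mid\R^2)=n\int_{\R^2}f(z,y)\,d\mu_n(y)=\E[f(z)]$, the integral converging absolutely since the integrand is $O(|y-z|^{-1})$ for $y$ near $z$ and $O(|y|^{-4})$ for $|y|$ large. Writing $x_0=P_n^{-1}(z)$, Corollary~\ref{cor:time-change} gives $f(z)=\tfrac{1}{\pi\rho_n(z)^4}(DP_n)_{x_0}(F(x_0))$; since $\rho_n(z)$ and the linear map $(DP_n)_{x_0}$ do not depend on $\wt{\L}$, this yields $\E[f(z)]=\tfrac{1}{\pi\rho_n(z)^4}(DP_n)_{x_0}(\E[F(x_0)])$. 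Finally $\E[F(x_0)]=0$: indeed $\E[U(x)]=n\,\E_Z[\log|x-Z|]$, with $Z$ uniform on $\S^2_n$, is a rotation-invariant function of $x\in\S^2_n$ and hence constant, so $\E[F(x_0)]=-\E[\nabla_S U(x_0)]=-\nabla_S\big(\E[U]\big)(x_0)=0$, the interchange of expectation and gradient being justified by the local integrability of $w\mapsto\nabla_S\log|x-w|$ in two dimensions. (Alternatively, $\E_Z[\nabla_S\log|x-Z|]$ is a tangent vector at $x$ fixed by every rotation of $T_x\S^2_n$ about $x$, hence $0$.) Therefore $\bar f(z\mid\R^2)=0$ and $\bar f(z\mid\Omega)=-\bar f(z\mid B(z,R))$.

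It then remains to bound
\[
|\bar f(z\mid B(z,R))|=\frac1\pi\left|\int_{B(z,R)}\Big(\tfrac{y-z}{|y-z|^2}+\tfrac1n\cdot\tfrac{z}{1+|z|^2/n}\Big)\frac{dy}{\rho_n(y)^4}\right|.
\]
Using $\rho_n(y)\ge 1$, the first summand contributes at most $\tfrac1\pi\int_{B(z,R)}|y-z|^{-1}\,dy=2R$; and since $\tfrac{|z|}{n+|z|^2}\le\tfrac1{2\sqrt n}$ by AM--GM and $R\le n^{1/2}$, the second summand contributes at most $\tfrac1\pi\cdot\tfrac1{2\sqrt n}\cdot\pi R^2=\tfrac{R^2}{2\sqrt n}\le\tfrac R2$. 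Hence $|\bar f(z\mid\Omega)|\le 2R+\tfrac R2=O(R)$.

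The one genuine subtlety — and the reason a direct estimate of $\int_\Omega f(z,y)\,d\mu_n(y)$ does not work — is that the drift term $\tfrac1n\cdot\tfrac{z}{1+|z|^2/n}$, integrated against $d\mu_n$ over all of $\R^2$, already contributes a vector of order $\sqrt n$; this is exactly cancelled by the large-$|y|$ tail of $\tfrac{y-z}{|y-z|^2}$, and that cancellation is what the identity $\bar f(z\mid\R^2)=0$ records. So the crux of the proof is this vanishing, which rests only on rotational symmetry on the sphere.
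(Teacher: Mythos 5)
Your proof is correct and follows essentially the same route as the paper: both use the vanishing of the total average force $\bar f(z\mid\R^2)=0$ (via rotational symmetry of $\E[F]$ on the sphere and Corollary \ref{cor:time-change}) and then bound $|\bar f(z\mid B(z,R))|$ directly to get $O(R)$. Your write-up merely spells out the symmetry argument and the constants in more detail than the paper does.
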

\begin{proof}
	First, note that by rotational symmetry, we have $\E[F(z)]
	= 0$ for any $z\in\S^2_n$. Thus, by Corollary
	\ref{cor:time-change}, we have
	\begin{equation} \label{eq:plane-avg}
	\bar{f}(z \mid \R^2) = 0.
	\end{equation}
	Next, by Lemma \ref{lem:basic-estimates}, we have
	\[ |\bar{f}(z \mid B(z, R))| = \left| n \int_{B(z, R)} f(z, y) \mu_n(y) \,dy \right| \le \int_{B(z, R)} |f(z, y)| \,dy \]
	\[ = O\( \int_0^R 2\pi r \cdot \frac{1}{r} \,dr \) + O\( R^2 |z| / n \) = O(R). \]
	Combining this with \eqref{eq:plane-avg}, we obtain
	\[ |\bar{f}(z \mid \Omega)| \le |\bar{f}(z \mid \R^2)| + |\bar{f}(z \mid B(z, R))| = O(R). \]
\end{proof}

\begin{lemma} \label{lem:force-deriv-average}
	Consider a point $z \in B(0, n^{1/2})$ and a radius $R \le
	n^{1/2}$. Let $\Omega = \R^2 \setminus B(z, R)$. Then,
	\begin{align*}
	|\bar{D_1f}(z \mid \Omega)| &= O(1).
	\end{align*}
\end{lemma}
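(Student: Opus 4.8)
The plan is to estimate $\bar{D_1f}(z\mid\Omega)$ \emph{directly}, by exploiting the angular symmetry of $\mathrm{Hess}\log|\cdot|$, rather than by reducing to the full--plane average $\bar{D_1f}(z\mid\R^2)$ as in the proof of Lemma~\ref{lem:force-average}. The reason one cannot simply imitate that argument is that $D_1f(z,y)$ behaves like $\mathrm{Hess}_z\log|z-y|$, which is \emph{not} absolutely integrable in $y$ near $y=z$ against $\mu_n$; a crude size bound of the type used for the force would only give $|\bar{D_1f}(z\mid\Omega)|=O(\log(\sqrt n/R))$, which may be of order $\log n$.

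Write $\Omega=\R^2\setminus B(z,R)$. From \eqref{eq:planar-F-def} together with $u_y(x)=\log|x-y|-\log\rho_n(x)-\log\rho_n(y)$ one has the pointwise identity $D_1f(z,y)=\mathrm{Hess}_z\log|z-y|-\mathrm{Hess}\log\rho_n(z)$, in which the second term is independent of $y$ and satisfies $|\mathrm{Hess}\log\rho_n(z)|=O(1/n)$ (immediate from the formula for $\rho_n$). Its contribution to $\bar{D_1f}(z\mid\Omega)=n\int_\Omega D_1f(z,y)\,d\mu_n(y)$ is thus at most $n\cdot O(1/n)\cdot\mu_n(\R^2)=O(1)$, so it remains to bound $n\int_\Omega\mathrm{Hess}_z\log|z-y|\,d\mu_n(y)$. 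Changing variables to $w=y-z$, this equals $\int_{|w|\ge R}K(w)\,g(z+w)\,dw$, where $K(w)=\mathrm{Hess}\log|w|=\frac{I}{|w|^2}-\frac{2ww^{T}}{|w|^4}$ (so $|K(w)|\le 3/|w|^2$) and $g(x)=\frac{1}{\pi\rho_n(x)^4}$ is the density of $n\mu_n$.

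The key observation is that $\int_{\{|w|=r\}}K(w)\,d\sigma(w)=0$ for every $r>0$, since on a circle centered at the origin the entries of $K$ are combinations of $\cos2\theta$ and $\sin2\theta$. Hence on each such circle we may subtract from $g(z+w)$ its average over the circle; bounding the resulting oscillation by $\pi r\cdot\sup_{|w|=r}|\nabla g(z+w)|$ and using $|K(w)|\le 3/r^2$ gives
\[ \Big|\int_{|w|\ge R}K(w)\,g(z+w)\,dw\Big| = O\!\left(\int_R^\infty \sup_{|w|=r}|\nabla g(z+w)|\;dr\right). \]
Finally, using $|z|\le\sqrt n$, one checks this integral is $O(1)$: from the explicit formula $|\nabla g(x)|=\frac{4|x|}{\pi n\rho_n(x)^6}$, we have $|\nabla g(x)|\le\frac{4}{\pi\sqrt n}$ for all $x$ and $|\nabla g(x)|\le\frac{4n^2}{\pi|x|^5}$ when $|x|\ge\sqrt n$; and since $|z+w|\ge|w|/2$ once $|w|\ge 2|z|$, the range $R\le r\le 2\sqrt n$ contributes $O(\sqrt n)\cdot O(1/\sqrt n)=O(1)$ while the range $r\ge 2\sqrt n$ contributes $O\!\big(\int_{2\sqrt n}^{\infty}n^2 r^{-5}\,dr\big)=O(1)$. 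This proves $|\bar{D_1f}(z\mid\Omega)|=O(1)$.

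The one genuinely delicate point — and the main obstacle — is recognising that the naive estimate is off by a logarithmic factor and must be replaced by the circular--cancellation argument above; once that is in place, the bounds on $g$ and $\nabla g$ are entirely routine, and the hypothesis $|z|\le\sqrt n$ (together with the harmless $R\le\sqrt n$) is exactly what makes $\int_R^\infty\sup_{|w|=r}|\nabla g(z+w)|\,dr$ converge to an absolute constant. As a consistency check, the same cancellation shows that the principal value of the full--plane average satisfies $\bar{D_1f}(z\mid\R^2)=-\rho_n(z)^{-4}I$, which may also be seen from rotational invariance of the law of $\cL$ about $P_n^{-1}(z)$, forcing $\E[\mathrm{Hess}_S U]=\tfrac12\E[\Delta_S U]\,I=-\pi I$ on the sphere (Proposition~\ref{prop:Delta-u}).
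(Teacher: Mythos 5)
Your proof is correct and follows essentially the same route as the paper's: split off the $O(1/n)$ term coming from $\rho_n$, use the vanishing angular average of the Hessian kernel over circles centered at $z$, and bound what remains by the oscillation of the projected density on each circle, integrated in $r$ using $|z|\le\sqrt n$. The only cosmetic difference is that you control the circle oscillation by $\pi r\sup_{|w|=r}|\nabla g(z+w)|$ via the mean value theorem, whereas the paper computes the max--min of the density on each circle explicitly; the resulting estimates are equivalent.
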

\begin{proof}
	By direct calculation, we find that
	\[ D_1f(z, y) = \frac{2 (z - y)^{\otimes 2} - |z - y|^2 I_2}{|z - y|^4} + \frac{1}{n} \( \frac{I_2}{1 + \frac{|z|^2}{n}} - \frac{2 z^{\otimes 2}}{n \(1 + \frac{|z|^2}{n}\)^2} \). \]
	Let $A(z, y)$ and $B(z, y)$ denote the first and second terms,
	respectively. Note that for any $r > 0$, we have by rotational symmetry that
	\[ \int_0^{2\pi} A(z, z + re^{i\theta}) \,d\theta = 0. \]
	Also, since $|z| \le n^{1/2}$, we have $|B(z, y)| = O(n^{-1})$ for all $y$. We then have with $h_n$ denoting the density of the measure $\mu_n$
	\begin{align}
	|\bar{D_1f}(z \mid \Omega)| &= n \left| \int_\Omega (A(z, y) + B(z, y)) \mu_n(y) \,dy \right| \nonumber \\
	&= n \left| \int_R^\infty r \int_0^{2\pi} A(z, z + re^{i\theta}) \mu_n(z + re^{i\theta}) \,d\theta \,dr \right| + O(1) \nonumber \\
	&\le n \int_R^\infty r \cdot \max_{y \in \partial B(z, r)} |A(z, y)| \cdot \max_{y, y' \in \partial B(z, r)} |h_n(y)-h_n(y')| \,dr + O(1). \label{eq:force-deriv-avg1}
	\end{align}
	To estimate the final expression, first note that $|A(x, y)| = O\(
	\frac{1}{|x - y|^2} \)$. Also, for any $r > 0$, we have
	\begin{align*}
	&\max_{y, y' \in \partial B(z, r)} |h_n(y) - h_n(y')| = \frac{1}{\pi n \( 1 + \frac{(r - |z|)^2}{n} \)^4} - \frac{1}{\pi n \( 1 + \frac{(r + |z|)^2}{n} \)^4} \\
	&\qquad \le \left| \frac{(r - |z|)^2}{n} - \frac{(r + |z|)^2}{n} \right| \cdot \frac{4}{\pi n \( 1 + \frac{(r - |z|)^2}{n} \)^5} = O\( \frac{r |z|}{n^2 \max(1, r^{10}/n^5)}\).
	\end{align*}
	Applying these estimates to \eqref{eq:force-deriv-avg1}, we have
	\begin{align*}
	|\bar{D_1f}(z \mid \Omega)| &\le O\( \int_R^\infty \frac{|z|}{n \max(1, r^{10}/n^5)} \,dr \) + O(1) \\
	&= O\(\frac{|z|}{n^{1/2}}\) + O(1) = O(1).
	\end{align*}
\end{proof}

\subsection{Far contributions}

\begin{lemma} \label{l:big-disk}
	Let $R$ be a number with $1 \le R \le n^{1/3}$. Consider any point
	$z \in B(0, n^{1/3})$, and let $\Omega = \R^2 \setminus B(z, 2R)$. Then,
	for some $c > 0$,
	\[ \P\( \max_{x \in B(z, R)} |f(x \mid \Omega)| > t (R + \sqrt{\log n}) \) \le e^{-ct + O(1)} \]
\end{lemma}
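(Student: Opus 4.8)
Below is how I would prove Lemma~\ref{l:big-disk}.

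The plan is to write $f(x\mid\Omega)=\bar f(x\mid\Omega)+\bigl(f(x\mid\Omega)-\bar f(x\mid\Omega)\bigr)$ and handle the two terms separately. For the mean: since $\bar f(x\mid\R^2)=0$ we have $\bar f(x\mid\Omega)=-\bar f(x\mid B(z,2R))$, and as $B(z,2R)\subset B(x,3R)$ for $x\in B(z,R)$, the computation in the proof of Lemma~\ref{lem:force-average} gives the \emph{deterministic} bound $\sup_{x\in B(z,R)}|\bar f(x\mid\Omega)|=O(R)$, which is $\le R+\sqrt{\log n}$. So the real content is to control the centered field $f(\cdot\mid\Omega)-\bar f(\cdot\mid\Omega)$, and the key point is that it is, up to a negligible correction, the gradient of a \emph{harmonic} function. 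Indeed, with $\Psi(x):=\sum_{y\in\wt{\L}\cap\Omega}\log|x-y|$, the set $\wt{\L}\cap\Omega$ avoids the open ball $B(z,2R)$, so $\Psi$ and $\E\Psi$ are harmonic there, and \eqref{eq:planar-F-def} yields, on $B(z,2R)$,
\[
f(x\mid\Omega)-\bar f(x\mid\Omega)=-\nabla h(x)+\zeta\,\nabla\log\rho_n(x),\qquad h:=\Psi-\E\Psi,\quad \zeta:=|\wt{\L}\cap\Omega|-n\mu_n(\Omega).
\]
The stray term is negligible: on $B(z,R)$ one has $|\nabla\log\rho_n|=O(n^{-2/3})$ because $|x|=O(n^{1/3})$ there, while $\zeta=-\bigl(|\wt{\L}\cap B(z,2R)|-n\mu_n(B(z,2R))\bigr)$ with $n\mu_n(B(z,2R))=O(R^2)\le O(n^{2/3})$, so Lemma~\ref{lem:exponential-tail} applied with $g=\1_{B(z,2R)}$ gives $\P(|\zeta|>s)\le e^{-s+O(n^{2/3})}$; taking $s\asymp n^{2/3}(R+\sqrt{\log n})$ shows $|\zeta\,\nabla\log\rho_n|\le R+\sqrt{\log n}$ on $B(z,R)$ off an event of probability $e^{-cn^{2/3}}$, which is below the target bound for all relevant $t$ (the statement being vacuous once $t(R+\sqrt{\log n})$ exceeds the crude deterministic bound $|f(\cdot\mid\Omega)|\le O(n/R)$).

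The heart of the argument is bounding $\sup_{x\in B(z,R)}|\nabla h(x)|$, and the plan is to use harmonicity to convert a \emph{fixed-$x$} tail estimate into a \emph{uniform} one with no union-bound loss. The standard interior estimate for harmonic functions gives, for each $x\in B(z,R)$, the bound $|\nabla h(x)|\le \frac{C}{R^{3}}\int_{B(z,3R/2)}|h(y)-h(z)|\,dy$. For the integrand I would estimate the moments of $h(x)-h(z)$ at a fixed $x\in B(z,3R/2)$. Writing $g_x(y)=\log\frac{|x-y|}{|z-y|}$, we have $h(x)-h(z)=\sum_{y\in\wt{\L}\cap\Omega}g_x(y)-n\int_\Omega g_x\,d\mu_n$; every $y\in\Omega$ lies at distance $\ge R/2$ from $B(z,3R/2)$, so $|g_x(y)|\le C_6$ for an absolute constant, and splitting $\Omega$ into the annulus $\{2R\le|z-y|<3R\}$ (on which $|g_x|\le C_6$ and which has $n$-mass $O(R^2)$) and the far region (on which $|g_x(y)|\le CR/|z-y|$), the type of integral computation already carried out in Lemmas~\ref{lem:force-average}--\ref{lem:force-deriv-average} gives
\[
n\int_\Omega|g_x(y)|^{2}\,d\mu_n(y)=O(R^{2})+O(R^{2}\log n)=O(R^{2}\log n),
\]
uniformly in $x$. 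Applying Lemma~\ref{lem:exponential-tail} to $g_x/(C_6+R\sqrt{\log n})$ — whose sup-norm is $\le 1$ and whose second-moment term is now $O(1)$ — yields $\P\bigl(|h(x)-h(z)|>sR\sqrt{\log n}\bigr)\le e^{-s+O(1)}$ for all $s>0$, hence $\E|h(x)-h(z)|^{p}\le(C'R\sqrt{\log n})^{p}\,p!$ for every integer $p\ge1$, uniformly in $x$.

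Combining these via Jensen's inequality and Fubini (and $|B(z,3R/2)|=O(R^{2})$),
\[
\E\Bigl[\sup_{x\in B(z,R)}|\nabla h(x)|^{p}\Bigr]\le\Bigl(\tfrac{C}{R^{3}}\Bigr)^{p}|B(z,3R/2)|^{\,p-1}\!\!\int_{B(z,3R/2)}\!\!\E|h(y)-h(z)|^{p}\,dy\le(C''\sqrt{\log n})^{p}\,p!,
\]
so Markov's inequality gives $\P\bigl(\sup_{B(z,R)}|\nabla h|>t\sqrt{\log n}\bigr)\le(C''p/t)^{p}$, and optimizing over the integer $p$ (taking $p\asymp t$ for large $t$, the bound being trivial for bounded $t$) gives $\P\bigl(\sup_{B(z,R)}|\nabla h|>t\sqrt{\log n}\bigr)\le e^{-ct+O(1)}$. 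Assembling the three ingredients — $|\bar f(\cdot\mid\Omega)|=O(R)$ on $B(z,R)$, negligibility of $\zeta\,\nabla\log\rho_n$, and the bound on $\sup|\nabla h|$, together with $\sqrt{\log n}\le R+\sqrt{\log n}$ — yields $\P\bigl(\max_{x\in B(z,R)}|f(x\mid\Omega)|>t(R+\sqrt{\log n})\bigr)\le e^{-ct+O(1)}$.

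The main obstacle I anticipate is exactly this passage from a fixed-$x$ bound to a supremum bound without a polynomial-in-$n$ loss: a naive $\eps$-net of $B(z,R)$ has $(R/\eps)^{2}$ points, and that factor cannot be absorbed into $e^{-ct+O(1)}$, so exploiting the harmonicity of $\Psi$ — through the $L^{1}$ interior estimate followed by the moment method — is essential rather than merely convenient. The remaining work is quantitative: justifying the interior estimate in the displayed $L^{1}$ form, and proving the uniform-in-$x$ second-moment bound $n\int_\Omega|g_x|^{2}\,d\mu_n=O(R^{2}\log n)$; it is the far-region contribution there, of order $R^{2}\log(\sqrt n/R)$, that is responsible for the $\sqrt{\log n}$ appearing in the statement.
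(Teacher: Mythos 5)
Your proof is essentially correct, but it takes a genuinely different route from the paper's. The paper controls the supremum over $B(z,R)$ by a two-step Taylor argument: it bounds $f(z\mid\Omega)$ and the Hessian term $D_1f(z\mid\Omega)$ at the center (via Lemma \ref{lem:exponential-tail} together with Lemmas \ref{lem:force-average} and \ref{lem:force-deriv-average}), and then gets uniform control of the third-derivative term by applying the exponential-moment bound directly to the envelope $y\mapsto\max_{x\in B(z,R)}|D_2f(x,y)|$, so that integrating twice along segments yields $\max_{x\in B(z,R)}|f(x\mid\Omega)|\le t(3R+\sqrt{\log n})$; this is the paper's way of avoiding the net/union-bound loss you rightly identify as the crux. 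You instead exploit that away from its sources the field is (up to the $\rho_n$ correction) the gradient of the harmonic function $h=\Psi-\E\Psi$ on $B(z,2R)$, pass to the supremum via the $L^1$ interior gradient estimate, and convert the pointwise exponential tails (with scale $R\sqrt{\log n}$, coming from the uniform bound $n\int_\Omega|g_x|^2\,d\mu_n=O(R^2\log n)$) into a sup bound by the moment method; the $R$-powers cancel exactly as you compute. Your route avoids the derivative estimates of Lemmas \ref{lem:basic-estimates} and \ref{lem:force-deriv-average} entirely and is conceptually cleaner about why no net is needed, at the price of the extra bookkeeping for the non-harmonic correction $\zeta\,\nabla\log\rho_n$ and of justifying the interior estimate and the differentiation under the integral sign for $\E\Psi$ (both routine).

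One small slip: your dismissal of the $\zeta$-term for large $t$ does not quite close. With the fixed threshold $s\asymp n^{2/3}(R+\sqrt{\log n})$ you get a bad-event probability $e^{-cn^{2/3}}$, but the statement is only vacuous for $t\gtrsim \frac{n/R}{R+\sqrt{\log n}}$, which for small $R$ is much larger than $n^{2/3}$; in the intermediate range $n^{2/3}\ll t\ll n/(R(R+\sqrt{\log n}))$ the target bound $e^{-ct+O(1)}$ is smaller than $e^{-cn^{2/3}}$. This is repaired in one line: either let the threshold scale with $t$ (take $s=\tfrac{t}{6}n^{2/3}(R+\sqrt{\log n})$, whose failure probability is at most $e^{-s+O(R^2)}\le e^{-ct}$ since $R^2\le n^{1/3}R\ll n^{2/3}R$), or note that for $t\ge n^{2/3}$ the deterministic bound $|\zeta|\,|\nabla\log\rho_n|\le n\cdot 2n^{-2/3}=2n^{1/3}\le \tfrac{t}{3}(R+\sqrt{\log n})$ makes the bad event unnecessary. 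With that patch the argument goes through.
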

\begin{proof}
	We first claim that for small enough $c$, each of the following
	inequalities occurs with probability at least $1 - e^{-ct + O(1)}$:
	\begin{align}
	|f(z \mid \Omega)| &\le t(R + \sqrt{\log n}) \label{eq:l-big-disk-1} \\
	|D_1f(z \mid \Omega)| &\le t \label{eq:l-big-disk-2} \\
	\max_{x \in B(z, R)} |D_2f(x \mid \Omega)| &\le t/R \label{eq:l-big-disk-3}
	\end{align}
	We do this by applying Lemma \ref{lem:exponential-tail} three
	times with different functions $f$.

	First, take $g(x) = \frac{1}{C_1 \sqrt{\log n}} f(z, x)$ with $C_1$ a
	large enough constant so that Lemma \ref{lem:basic-estimates} gives
	the upper bound
	\[ |g(x)| \le \frac{1}{\sqrt{\log n}} \( \frac{1}{|z - x|} + \frac{|z|}{n} \) \le \frac{1}{\sqrt{\log n}} \( \frac{1}{|z - x|} + n^{-2/3} \). \]
	Note that this bound ensures $|g(x)| \le 1$ for all $x \in \Omega$,
	so that Lemma \ref{lem:exponential-tail} applies. Lemma
	\ref{lem:exponential-tail} then gives
	\begin{align}
	&\log \P\( |f(z \mid \Omega) - \bar{f}(z \mid \Omega)| > C_1 t \sqrt{\log n} \) \le - \frac{1}{2} t + O(1) + 2n \int_\Omega \frac{\( \frac{1}{|z - x|} + n^{-2/3} \)^2}{\log n} \,d\mu_n(x) \nonumber \\
	&\qquad\qquad \le - \frac{1}{2} t + O(1) + 4n \int_\Omega \frac{\frac{1}{|z - x|^2} + n^{-4/3}}{\log n} \,d\mu_n(x). \label{eq:l-big-disk-1a}
	\end{align}
	We estimate the integral in the last expression by observing that
	$\mu_n(x) = O(1/n)$ for all $x$, and $\mu_n(x) = O\( \frac{n}{|x -
		z|^4} \)$ for $x \not\in B(z, n^{1/2})$. Thus,
	\begin{align*}
	n \int_\Omega \frac{\frac{1}{|z - x|^2} + n^{-4/3}}{\log n} \,d\mu_n(x) &\le \frac{O(1)}{\log n} \( \int_{2R}^{\sqrt{n}} \( \frac{1}{r}+ \frac{r}{n^{4/3}} \) \,dr + \int_{\sqrt{n}}^\infty \( \frac{1}{r} + \frac{r}{n^{4/3}} \) \frac{n^2}{r^4} \,dr \) \\
	&= \frac{O(1)}{\log n} \( O(\log n) + O(1) \) = O(1).
	\end{align*}
	Substituting into \eqref{eq:l-big-disk-1a}, we obtain
	\[ \log \P\( |f(z \mid \Omega) - \bar{f}(z \mid \Omega)| > C_1 t \sqrt{\log n} \) \le - \frac{1}{2} t + O(1). \]
	By Lemma \ref{lem:force-average}, we also have $\bar{f}(z \mid
	\Omega) = O(R)$. Thus, after rescaling $t$, we see that
	\eqref{eq:l-big-disk-1} occurs with probability at least $1 - e^{-ct
		+ O(1)}$ for small enough $c$.

	Next, take $g(x) = \frac{1}{C_2} D_1f(z, x)$ with $C_2$ large enough
	so that Lemma \ref{lem:basic-estimates} gives
	\[ |g(x)| \le \frac{1}{|x - z|^2} + \frac{1}{n}. \]
	Using Lemma \ref{lem:exponential-tail}, we obtain
	\begin{align*}
	&\log \P\( |D_1f(z \mid \Omega) - \bar{D_1f}(z \mid \Omega)| > C_2 t \) \le - \frac{1}{4} t + O(1) + 2n \int_\Omega \( \frac{1}{|x - z|^2} + \frac{1}{n} \)^2 \,d\mu_n(x) \\
	&\qquad\le - \frac{1}{4} t + O(1) + 4n \int_\Omega \( \frac{1}{|x - z|^4} + \frac{1}{n^2} \) \,d\mu_n(x) \\
	&\qquad= - \frac{1}{4} t + O(1) + \frac{4}{n}\mu_n(\Omega) + O\( \int_R^\infty \frac{1}{r^4} \cdot r \,dr \) \\
	&\qquad= - \frac{1}{4} t + O(1) + O(1/n) + O(1/R^2) = - \frac{1}{4}t + O(1).
	\end{align*}
	By Lemma \ref{lem:force-deriv-average}, $|\bar{D_1f}(z \mid \Omega)|
	= O(1)$. Thus, after rescaling $t$, we see that
	\eqref{eq:l-big-disk-2} also occurs with probability at least $1 -
	e^{-ct + O(1)}$ for small enough $c$.

	Finally, we take $g(y) = \frac{R}{C_3} \cdot \max_{x \in B(z, R)} |D_2f(x, y)|$
	with $C_3$ large enough so that Lemma \ref{lem:basic-estimates} gives
	\[ |g(y)| \le \frac{R}{|y - z|^3} + Rn^{-4/3}. \]
	Using Lemma \ref{lem:exponential-tail}, we obtain
	\begin{align*}
	&\log \P\( \max_{x \in B(z, R)} |D_2f(x \mid \Omega)| > C_3 t/R \) \le \log \P\( \frac{R}{C_3} \sum_{y \in \wt{\L} \cap \Omega} \max_{x \in B(z, R)} |D_2f(x, y)| > t \) \\
	&\qquad\le -t + O(1) + 2n \int_\Omega \( \frac{R}{|y - z|^3} + Rn^{-4/3} \) \,d\mu_n(y) \\
	&\qquad\le -t + O(1) + 2n^{-1/3} R \mu_n(\Omega) + O\( \int_R^\infty \frac{R}{r^3} \cdot r \,dr \) = -t + O(1).
	\end{align*}
	Thus, after rescaling $t$, \eqref{eq:l-big-disk-3} occurs with
	probability at least $1 - e^{-ct + O(1)}$ for small enough $c$.

	Now, suppose that the inequalities \eqref{eq:l-big-disk-1},
	\eqref{eq:l-big-disk-2}, and \eqref{eq:l-big-disk-3} all hold. Then,
	\eqref{eq:l-big-disk-2} and \eqref{eq:l-big-disk-3} imply that
	\[ \max_{x \in B(z, R)} |D_1f(x \mid \Omega)| \le 2t. \]
	Combining this with \eqref{eq:l-big-disk-1} yields
	\[ \max_{x \in B(z, R)} |f(x \mid \Omega)| \le t(3R + \sqrt{\log n}), \]
	which holds with probability at least $1 - e^{-ct + O(1)}$. Rescaling
	$t$ gives the result.
\end{proof}

\subsection{Near contributions}

\begin{lemma} \label{l:near-force-bounds}
	Let $0 < q < \frac{1}{2}$ and $2 < R < n^{1/3}$ be
	given. Consider any $z \in B(0, n^{1/3})$ and any $\Omega
	\subset B(0, R) \setminus B(z, q)$. There is an absolute constant $c >
	0$ such that for all $t > 0$, we have
	\[ \P\( \max_{x \in B(z, q/2)} |f\( x \mid \Omega \) - \overline{f}(z \mid \Omega)|\ge t/q \) \le q^{ct - O(1)} e^{O(q \log R)}. \]
\end{lemma}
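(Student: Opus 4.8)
The plan is to decompose $\Omega = \Omega_{\mathrm{near}}\sqcup\Omega_{\mathrm{far}}$ around $z$ at radius $\rho := \max\bigl(q,\,q\log(1/q)\bigr)$, with $\Omega_{\mathrm{near}} = \Omega\cap B(z,\rho)$ and $\Omega_{\mathrm{far}} = \Omega\setminus B(z,\rho)$, and to treat the two pieces by completely different mechanisms. This particular radius is dictated by two elementary facts on $(0,\tfrac12)$: one has $\rho/q = \log(1/q)$ (which is what turns an $e^{-ct}$ tail into the required $q^{\Theta(t)}$), while $\rho^2/q = q(\log(1/q))^2 = O(1)$ (which keeps the variance proxy in Lemma~\ref{lem:exponential-tail} of size $O(q\log R)$ rather than something growing in $R$). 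Since $f(x\mid\Omega) = f(x\mid\Omega_{\mathrm{near}}) + f(x\mid\Omega_{\mathrm{far}})$ and likewise for $\overline f(z\mid\cdot)$, it suffices to control, each with probability at least $1 - q^{\Theta(t)-O(1)}e^{O(q\log R)}$, the quantities $M_{\mathrm{far}} := \max_{x\in B(z,q/2)}|f(x\mid\Omega_{\mathrm{far}}) - \overline f(z\mid\Omega_{\mathrm{far}})|$ and $M_{\mathrm{near}} := \max_{x\in B(z,q/2)}|f(x\mid\Omega_{\mathrm{near}})|$, together with the deterministic term $\overline f(z\mid\Omega_{\mathrm{near}})$, which by Lemma~\ref{lem:basic-estimates} has norm $O(\rho) = O(q\log(1/q)) \le t/(4q)$ once $t\ge 1$ (for $t$ below a constant the asserted bound is vacuous after enlarging the hidden $O(1)$).

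\textbf{The far piece.} For $M_{\mathrm{far}}$ I first reduce to the value at $z$ by a first-order Taylor estimate along the segment from $z$ to $x$: $M_{\mathrm{far}} \le |f(z\mid\Omega_{\mathrm{far}}) - \overline f(z\mid\Omega_{\mathrm{far}})| + \tfrac q2 \sup_{x\in B(z,q/2)} |D_1 f(x\mid\Omega_{\mathrm{far}})|$. For the first summand I apply the centered tail bound of Lemma~\ref{lem:exponential-tail} to $g(y) = \tfrac{\rho}{C} f(z,y)\1_{\Omega_{\mathrm{far}}}(y)$, which satisfies $|g|\le 1$ because $|z-y|\ge\rho$ on $\Omega_{\mathrm{far}}$; the event $\{|f(z\mid\Omega_{\mathrm{far}}) - \overline f(z\mid\Omega_{\mathrm{far}})|\ge ct/q\}$ becomes a deviation of order $t\log(1/q)$ for $\sum g$, while the variance proxy is $2n\int_{\Omega_{\mathrm{far}}}|g|^2\,d\mu_n = O(\rho^2\log(R/\rho)) = O(q\log R)$, so Lemma~\ref{lem:exponential-tail} delivers $q^{\Theta(t)}e^{O(q\log R)}$. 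For the derivative term I bound $\sup_{x\in B(z,q/2)}|D_1 f(x\mid\Omega_{\mathrm{far}})| \le \sum_{y\in\wt{\L}\cap\Omega_{\mathrm{far}}} \sup_x |D_1 f(x,y)|$ (using $|D_1 f(x,y)| = O(|z-y|^{-2})$ there) and apply the uncentered tail bound of Lemma~\ref{lem:exponential-tail} to this nonnegative scalar sum with $g(y) = \tfrac{\rho^2}{C}\bigl(\sup_x|D_1 f(x,y)|\bigr)\1_{\Omega_{\mathrm{far}}}(y)\in[0,1]$; the relevant threshold $\tfrac23 t q^{-2}$ corresponds to a deviation of order $t(\log(1/q))^2$ for $\sum g$, which is more than enough, and the variance proxy is again $O(q\log R)$.

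\textbf{The near piece.} For $M_{\mathrm{near}}$ I use a deliberately crude bound, efficient precisely because $\Omega_{\mathrm{near}}$ is tiny: for $y\in\Omega_{\mathrm{near}}$ and $x\in B(z,q/2)$ one has $|x-y|\ge q/2$, so by Lemma~\ref{lem:basic-estimates}, $M_{\mathrm{near}} \le \sum_{y\in\wt{\L}\cap\Omega_{\mathrm{near}}}\max_x|f(x,y)| \le \tfrac{C}{q}\,N$, where $N := |\wt{\L}\cap\Omega_{\mathrm{near}}|$ is binomial with mean $n\mu_n(\Omega_{\mathrm{near}}) \le \rho^2 = q^2(\log(1/q))^2$. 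A Chernoff bound for binomials gives $\P(N\ge m)\le (e\rho^2/m)^m$, and taking $m = \Theta(t)$ yields $q^{\Theta(t)}$ times a correction of the form $\exp\bigl(\Theta(t)(\log\log(1/q) - \log t + O(1))\bigr)$; since $\Omega_{\mathrm{near}}$ is empty unless $\log(1/q)\ge 1$, and $2\log u - u$ is bounded above for $u\ge 1$, this correction is swallowed by the $q^{-O(1)}$ factor. Combining the three estimates by a union bound proves the lemma; when $q$ is bounded away from $0$ the near piece disappears entirely and the far estimate alone suffices, its $e^{-ct}$ tail matching $q^{\Theta(t)-O(1)}$ because then $\log(1/q) = O(1)$.

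\textbf{Main obstacle.} The crux is the calibration of $\rho$. A cutoff as small as $\rho = q$ only produces an $e^{-ct}$ tail, strictly weaker than $q^{ct}$ for small $q$; a cutoff large enough to renormalize each individual force term $f(z,y)$ to size $\approx 1$ (i.e.\ weighting by $\sim|z-y|$) inflates the variance proxy to order $R^2$, which is far too large for the desired $e^{O(q\log R)}$. The value $\rho = q\log(1/q)$ is the unique threshold (up to constants) at which the far part simultaneously has a power-of-$q$ tail and a multiplicative error only $e^{O(q\log R)}$; once that is in place, the remaining steps — the Taylor reduction, the two applications of Lemma~\ref{lem:exponential-tail} for the far part, the binomial Chernoff bound for the near part, and disposing of the double-logarithmic correction and of the degenerate parameter ranges — are routine bookkeeping.
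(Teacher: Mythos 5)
Your proof is correct, and while your treatment of the far contribution follows the same skeleton as the paper's, your handling of the near contribution is genuinely different. The paper splits at radius $1$: on $\Omega_1=\Omega\setminus B(z,1)$ it does exactly what you do on $\Omega_{\mathrm{far}}$ (control $\max_x|D_1f(x\mid\Omega_1)|$ and $|f(z\mid\Omega_1)-\bar{f}(z\mid\Omega_1)|$ via Lemma~\ref{lem:exponential-tail}), but with weights $q$ and $\sqrt{q}$, extracting the power of $q$ by the substitution $t\mapsto s/\sqrt{q}$ together with $q(\log(1/q))^2\le 1$, rather than by your calibration $\rho=q\log(1/q)$ which turns the threshold $t/q$ into deviations of size $t\log(1/q)$ directly; on the annulus $\Omega_2=\Omega\cap B(z,1)$, however, the natural normalization $q\log(1/q)\max_x|f(x,y)|$ is unbounded near $\partial B(z,q)$, so Lemma~\ref{lem:exponential-tail} is inapplicable and the paper performs a bespoke exponential-moment computation ($\E[e^{\sum g}]\le e^{O(q^{1/4})}$, splitting the radial integral at $\sqrt{q}$) followed by Markov and the bound $|\bar{f}(z\mid\Omega_2)|=O(1/q)$. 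You avoid that entirely by shrinking the near region to $B(z,q\log(1/q))$, where each point of $\wt{\L}$ contributes only $O(1/q)$ and the point count is binomial with mean at most $q^2(\log(1/q))^2$, so a Chernoff bound already yields a $q^{\Theta(t)}$ tail; the price — pushing the inner radius of the far region down to $q\log(1/q)$ — is paid for precisely by the boundedness of $q(\log(1/q))^2$, which keeps the moment term in Lemma~\ref{lem:exponential-tail} at $O(q\log R)$. What your route buys is a proof using only the stated tail lemma plus a binomial tail, with the $q$-power appearing transparently from the choice of cutoff; what it costs is some bookkeeping you handle correctly but tersely: absorbing the $(e(\log(1/q))^2/m)^m$ correction into $q^{ct-O(1)}$ (your reduction to the boundedness of $2\log L-L$ for $L\ge \log 2$ does work, using the $q^{-O(1)}$ slack for $m=1$), the degenerate regime $q\ge 1/e$ where the near annulus is empty and $e^{-ct}\le q^{ct}$, and the harmless restriction to $t$ above a constant. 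One shared imprecision, not specific to you: both you and the paper estimate $\int|y-z|^{-2}$ as though $\Omega$ lay within distance $O(R)$ of $z$, while the statement only assumes $\Omega\subset B(0,R)$ and $z\in B(0,n^{1/3})$; when $|z|>2R$ the integral is $O(1)$ by the area bound, so the $O(\log(R/\rho))$ estimate (and hence both proofs) survives in every case.
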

\begin{proof}
	Let $\Omega_1 = \Omega \setminus B(z, 1)$ and $\Omega_2 = \Omega
	\cap B(z, 1)$.

	We first apply Lemma \ref{lem:exponential-tail} twice on
	$\Omega_1$. Taking $f(y) = \frac{q}{C_1} \max_{x \in B(z, q/2)}
	|D_1f(x, y)|$ with $C_1$ large enough to ensure that $|g(y)| \le 1$
	on $\Omega_1$, we find that
	\begin{align}
	&\log \P\( \max_{x \in B(z, q/2)} |D_1f(x \mid \Omega_1)| \ge C_1 t/q \) \le -t + O(1) + 2n \int_{\Omega_1} |g(y)| \,d\mu_n(y) \nonumber \\
	&\qquad\le -t + O(1) + O(nq) \cdot \int_{\Omega_1} \( \frac{1}{|y - z|^2} + \frac{1}{n} \) \,d\mu_n(y) \\
	&\qquad\le -t + O(1) + O(q \mu_n(\Omega_1)) + O\( \int_1^{2R} \frac{q}{r^2} \cdot r \,dr \) \nonumber \\
	&\qquad= -t + O(1) + O(q \log R). \label{eq:med-D_1F-bound}
	\end{align}
	For our second application of Lemma \ref{lem:exponential-tail}, we
	take $g(y) = \frac{\sqrt{q}}{C_2} f(z, y)$ with $C_2$ large enough
	to ensure $|g(y)| \le 1$ on $\Omega_1$. We obtain
	\begin{align}
	&\log \P\( |f(z \mid \Omega_1) - \bar{f}(z \mid \Omega_1)| \ge C_2 t/\sqrt{q} \) \le -\frac{1}{2}t + O(1) + 2n \int_{\Omega_1} |g(y)|^2 \,d\mu_n(y) \nonumber \\
	&\qquad\le -\frac{1}{2}t + O(1) + O(nq) \cdot \int_{\Omega_1} \(\frac{1}{|y - z|} + \frac{|z|}{n}\)^2 \,d\mu_n(y) \nonumber \\
	&\qquad\le -\frac{1}{2}t + O(1) + O\(nq \cdot \frac{|z|^2}{n^2}\) \cdot \mu_n(\Omega_1) + O\( \int_1^{2R} \frac{q}{r^2}  \cdot r \,dr \) \nonumber \\
	&\qquad= -\frac{1}{2}t + O(1) + O(q \log R). \label{eq:med-F-bound}
	\end{align}
	Combining \eqref{eq:med-D_1F-bound} and \eqref{eq:med-F-bound} and
	rescaling $t$, we obtain
	\[ \log \P\( \max_{x \in B(z, q/2)} |f(x \mid \Omega_1) - \bar{f}(z \mid \Omega_1)| \ge t/\sqrt{q} \) \le -ct + O(1) + O(q \log R) \]
	for sufficiently small $c$. Setting $t = s/\sqrt{q}$, this may be
	rewritten as
	\begin{align}
	\P\( \max_{x \in B(z, q/2)} |f(x \mid \Omega_1) - \bar{f}(z \mid \Omega_1)| \ge s/q \) &\le e^{-cs/\sqrt{q} + O(1) + O(q \log R)} \nonumber \\
	&\le q^{cs - O(1)} \cdot e^{O(q \log R)}. \label{eq:med-bound}
	\end{align}

	Next, we analyze the contribution from $\Omega_2$. Let $g(y) =
	\frac{1}{C_3} \cdot q \log \frac{1}{q} \cdot \max_{x \in B(z, q/2)}
	|f(x, y)|$, where $C_3$ is a large enough constant so that (using
	Lemma \ref{lem:basic-estimates})
	\[ g(y) \le \frac{1}{4} \cdot q \log \frac{1}{q} \cdot \frac{1}{|y - z|} \]
	for all $y \in \Omega_2$. We cannot apply Lemma
	\ref{lem:exponential-tail} directly, because we do not have $|f(y)|
	\le 1$ on all of $\Omega_2$. However, a similar argument using a
	more precise analysis of exponential moments will work. Note that
	\begin{align*}
	\E\left[ e^{\sum_{y \in \wt{\L} \cap \Omega_2} g(y)} \right] &= \( 1 + \int_{\Omega_2} (e^{g(x)} - 1) \,d\mu_n(x) \)^n \le \exp\( n \int_{\Omega_2} (e^{g(x)} - 1) \,d\mu_n(x) \) \\
	&\le \exp\( 2 \int_q^1 \( \exp\( \frac{1}{4} \cdot q \log\frac{1}{q} \cdot \frac{1}{r} \)  - 1 \) \cdot r\,dr \) \\
	&\le \exp\( 2 \int_q^{q^{1/2}} q^{-1/4} \,dr + O\( \int_{q^{1/2}}^1 q \log \frac{1}{q} \,dr \) \) \\
	&= \exp\( O(q^{1/4}) \).
	\end{align*}
	Markov's inequality then implies
	\[ \log \P\(\max_{x \in B(z, q/2)} |f(x \mid \Omega_2)| \ge \frac{C_3 t}{q \log \frac{1}{q}} \) \le -t + O(q^{1/4}). \]
	Setting $t = \frac{1}{C_3} \log \frac{1}{q} \cdot s$, this may be rewritten as
	\[ \P\( \max_{x \in B(z, q/2)} |f(x \mid \Omega_2)| \ge s/q \) \le q^{s/C_3 - O(1)}. \]
	Note that this also implies that $|\bar{f}(z \mid \Omega_2)| =
	O(1/q)$, and so we may conclude that
	\begin{equation} \label{eq:near-bound}
	\P\( \max_{x \in B(z, q/2)} |f(x \mid \Omega_2) - \bar{f}(z \mid \Omega_2)| \ge s/q \) \le q^{cs - O(1)}
	\end{equation}
	for small enough $c$. Combining \eqref{eq:med-bound} and
	\eqref{eq:near-bound} gives the result.
\end{proof}

\subsection{Overall disk bound: Proof of Lemma \ref{l:moving-disk-bound}} \label{subsec:moving-disk-bound-proof}

\begin{proof}
	Let $\Omega = B(z, 2\sqrt{\log n})$. According to Lemma
	\ref{l:big-disk} with $R = \sqrt{\log n}$, we have for small enough
	$c$ that
	\begin{equation} \label{eq:moving-disk-bound1}
	\P\( \max_{x \in B(z, \sqrt{\log n})} |f(x \mid \R^2 \setminus \Omega)| > \frac{1}{2} t M \sqrt{\log n} \) \le e^{-ctM + O(1)}.
	\end{equation}

	We next consider contributions from within $\Omega$. Let $S \subset
	B(z, \sqrt{\log n})$ be a $\delta$-net of $B(z, \sqrt{\log n})$ with
	$|S| = O(\log n / \delta^2)$. For each $s \in S$, we apply Lemma
	\ref{l:near-force-bounds} with the region $\Omega_s := \Omega
	\setminus B(s, 2\delta)$. We use the parameters $q = 2\delta$ and $R
	= 4\sqrt{\log n}$. For a small enough $c$, this gives
	\[ \P\( \max_{y \in B(s, \delta)} |f\( y \mid \Omega_s \) - \bar{f}(s \mid \Omega_s)|\ge t/4\delta \) \le \delta^{ct - O(1)} e^{O(\delta \log \log n)} = \delta^{ct - O(1)}. \]
	Thus,
	\begin{align*}
	&\P\( \max_{y \in B(s, \delta)} |f\( y \mid \Omega \setminus B(y, \delta) \) - \bar{f}(s \mid \Omega_s)|\ge t/2\delta \) \\
	&\qquad\qquad\le \delta^{ct - O(1)} + \P\( |\wt{\L} \cap (B(s, 2\delta) \setminus B(y, \delta))| \ge t/4 \) \le \delta^{ct - O(1)}.
	\end{align*}
	Using a union bound over all $s \in S$, we obtain
	\[ \P\( \max_{\substack{s \in S \\ y \in B(s, \delta)}} |f\( y \mid \Omega \setminus B(y, \delta) \) - \bar{f}(s \mid \Omega_s)|\ge \frac{1}{2} tM \sqrt{\log n} \) \le |S| \cdot \delta^{ct - O(1)} \]
	\[ \le (\log n) \cdot \delta^{-2} \cdot \delta^{ct - O(1)} = \delta^{ct - O(1)} \le e^{-ct + O(1)}. \]
	Note that by Lemma \ref{lem:force-average} and
	\eqref{eq:moving-disk-bound1}, we have
	\[ |\bar{f}(s \mid \Omega_s)| \le |\bar{f}(s \mid \R^2 \setminus B(s, 2\delta))| + |\bar{f}(s \mid \R^2 \setminus \Omega)| \le O(1) + O(\sqrt{\log n}), \]
	so it follows that
	\[ \P\( \max_{\substack{s \in S \\ y \in B(s, \delta)}} |f\( y \mid \Omega \setminus B(y, \delta) \)| \ge \frac{1}{2} tM \sqrt{\log n} \) \le e^{-ct + O(1)}. \]
	Combining with \eqref{eq:moving-disk-bound1} completes the proof.
\end{proof}

\section{Relating matchings in squares and on spheres}
\label{sec:sphere-square}
In this section we will give the proof of Proposition \ref{prop:sphere-square-matching}.
\begin{proof}[Proof of Proposition \ref{prop:sphere-square-matching}]
	\def\dmin{d_{\text{min}}}

	Let $Q = [0, \sqrt{n\pi}]^2 \subset \R^2$, and let $\wh{Q} =
	P_N^{-1}(Q) \subset \S^2_N$. It is more convenient to consider
	$\cA$ and $\cB$ having points drawn i.i.d.\ uniformly from $Q$
	rather than $[0, \sqrt{n}]^2$; clearly, the original statement
	follows after rescaling by $\sqrt{\pi}$.

	We will construct matchings of $\cA$ to
	$\cB$ based on matchings of $\cX$ to $\cY$. We first note that $|\cX \cap
	\wh{Q}| \sim \text{Binom}(N, \lambda_N(\wh{Q}) / N)$, and since
	\[ \lambda_N(\wh{Q}) = N\mu_N(Q) = n + O(n/N), \]
	we then have
	\[ \E\left| |\cX \cap \wh{Q}| - n | \right| = O(\sqrt{n}). \]

	Moreover, conditioned on the size of $|\cX \cap \wh{Q}|$, the points
	of $\wh{\cA} := P_N(\cX \cap \wh{Q})$ are distributed i.i.d.\ on $Q$
	according to a density proportional to $\mu_N$, which is within
	$O(n/N)$ in total variation distance to uniform. It then follows by
	simple calculations that $\wh{\cA}$ may be coupled to $\cA$ so that
	\[ \E |\cA \setminus \wh{\cA}| = O(\sqrt{n}) + O\( n \cdot \frac{n}{N} \) = O(\sqrt{n}). \]
	Similarly, we may couple $\wh{\cB} := P_N(\cY \cap \wh{Q})$ to $\cB$ so
	that $\E |\cB \setminus \wh{\cB}| = O(\sqrt{n})$.

	Now, let $\wh{\varphi} : \cX \to \cY$ be a matching which minimizes
	$\sum_{x \in \cX} |x - \wh{\varphi}(x)|$, let $\dmin$ denote the
	minimal value. Define the sets
	\begin{align*}
	\cA_1 &= \cA \cap \wh{\cA} \\
	\cA_2 &= \left\{ a \in \cA_1 : (P_N \circ \wh{\varphi} \circ P_N^{-1})(a) \in \wh{\cB} \right\} \\
	\cA_3 &= \left\{ a \in \cA_2 : (P_N \circ \wh{\varphi} \circ P_N^{-1})(a) \in \cB \right\},
	\end{align*}
	which satisfy $\cA_3 \subseteq \cA_2 \subseteq \cA_1 \subseteq \cA$. We may define
	a matching $\varphi : \cA \to \cB$ by setting $\varphi(a) = (P_N \circ
	\wh{\varphi} \circ P_N^{-1})(a)$ for $a \in \cA_3$ and matching the
	remaining points in an arbitrary manner.

	Note that the distance between any two points in $Q$ is at most
	$\sqrt{2\pi n}$. Also, by rotational symmetry, we have
	\[ \E \sum_{a \in \cA_3} |a - \varphi(a)| \le 2 \E \sum_{x \in \cX \cap \wh{Q}} |x - \wh{\varphi}(x)| = \frac{2 \lambda_N(\wh{Q})}{N} \E[\dmin] = O\( \frac{n}{N}\) \cdot \E[\dmin]. \]
	Thus,
	\begin{align}
	\E \sum_{a \in \cA} |a - \varphi(a)| &\le \E \sum_{a \in \cA_3} |a - \varphi(a)| + \sqrt{2\pi n} \cdot \E |\cA \setminus \cA_3| \nonumber\\
	&= O\( \frac{n}{N}\) \cdot \E[\dmin] + \sqrt{2\pi n} \( \E |\cA \setminus \cA_1| + \E |\cA_1 \setminus \cA_2| + \E |\cA_2 \setminus \cA_3| \) \nonumber\\
	&\le O\( \frac{n}{N}\) \cdot \E[\dmin] + \sqrt{2\pi n} \( O(\sqrt{n}) + \E |\wh{\cA} \setminus \cA_2| + \E |\cB \setminus \wh{\cB}| \) \nonumber\\
	&= O\( \frac{n}{N}\) \cdot \E[\dmin] + O(n) + O(\sqrt{n}) \cdot \E |\wh{\cA} \setminus \cA_2|. \label{eq:Q-matching-bound}
	\end{align}

	It remains to estimate $\E |\wh{\cA} \setminus \cA_2|$. We will use
	the fact that for piecewise smooth curves $\gamma, \gamma' \subset
	\S^2_N$ and a rotation $\vartheta \in SO_3(\R)$ chosen uniformly at
	random, the expected number of intersections of $\gamma$ with
	$\vartheta \gamma'$ is proportional to $\frac{1}{N} \cdot |\gamma|
	\cdot |\gamma'|$. (See e.g.\ the spherical kinematic formula given in
	\cite{schneider-weil}, Theorem 6.5.6. Our statement amounts to the
	special case $j = 0$ and $A = B = \S^2$.)

	For each $x \in \cX$, let $\gamma_x$ denote the geodesic in $\S^2_N$
	connecting $x$ to $\wh{\varphi}(x)$. Then, the rotational symmetry
	of $\cX$ and the above kinematic formula give
	\begin{equation} \label{eq:A_2-bound}
	\E |\wh{\cA} \setminus \cA_2| \le \E \left| \{ x \in \cX : \gamma_x \cap \partial\wh{Q} \ne \emptyset \} \right| \le O(1) \cdot \frac{|\partial \wh{Q}|}{N} \cdot \E \sum_{x \in \cX} |\gamma_x|.
	\end{equation}
	Substituting \eqref{eq:A_2-bound} into \eqref{eq:Q-matching-bound}
	and using the facts that $|\partial \wh{Q}| = O(\sqrt{n})$ and $\E
	\sum_{x \in \cX} |\gamma_x| = O(\E[\dmin])$, we conclude that
	\[ \E \sum_{a \in \cA} |a - \varphi(a)| \le O(n) + O\( \frac{n}{N} \) \E[\dmin], \]
	which gives the desired result upon dividing by $n$.
\end{proof}

\section{Local maxima of the potential}\label{sec:maxima}

In this section we prove Theorem \ref{thm:maxima}. The upper and lower
bounds will be treated separately, but both bounds require estimates
on the probability density of $|F(x)|$ (or equivalently, on $|f(0)|$
after stereographic projection). We collect the required bounds in the
following lemma, whose proof is deferred to Section
\ref{subsec:f-density-proof}.
\begin{lemma} \label{lem:f-density}
  For any $x \in \S^2_n$ and any $\eps < 1$, consider the
  stereographic projection taking $x$ to $0$. Let $f$ be the planar
  version of $F$ as defined in \eqref{eq:planar-F-def}. Then,
  \[ \P\( |f(0) - y| < \eps \) = O\( \frac{\eps^2}{\log n} \) \]
  uniformly for all $y \in \R^2$, and
  \[ \P\( |f(0)| < \eps \;\text{and}\; \nabla f(0) \preceq -\frac{1}{5}I_2 \) = \Omega\( \frac{\eps^2}{\log n} \). \]
\end{lemma}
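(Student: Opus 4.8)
The plan is to pass to the plane and reduce everything to the sum $f(0)=\sum_{j=1}^n V_j$, where $V_j:=y_j/|y_j|^2=1/\overline{y_j}$ as $y_j$ ranges over $\wt\L$; this uses only that $f(0,y)=y/|y|^2$ in \eqref{eq:planar-F-def}. By Proposition~\ref{prop:P_n-properties} each $V_j$ has the rotationally symmetric density $g_n(v)=\frac{n}{\pi(n|v|^2+1)^2}$ on $\C$ (one may also note that $nV_j$ has law $\mu_n$, since $y\mapsto n/\bar y$ lifts under $P_n$ to an isometry of $\S^2_n$). Likewise, the $z=0$ case of the formula for $D_1f$ in the proof of Lemma~\ref{lem:force-deriv-average} reads $D_1f(0,y)=\frac{2y^{\otimes2}-|y|^2I_2}{|y|^4}+\frac1nI_2$, whose first summand is the traceless symmetric $2\times2$ matrix attached to the complex number $(1/\bar y)^2=V_j^2$ and whose $\frac1nI_2$'s sum to $I_2$. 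Hence, up to a sign fixed by the convention relating $\nabla f$ and $\mathrm{Hess}\,u$, we get $\nabla f(0)=S+I_2$ with $S:=\sum_jV_j^2$ (identified with a traceless symmetric matrix, so its operator norm equals $\big|\sum_jV_j^2\big|$), and the definiteness condition in the statement is equivalent to $\|S\|_{\mathrm{op}}\le\tfrac45$. Thus both claims are small-ball estimates for the pair $\big(\sum_jV_j,\ \sum_jV_j^2\big)$, which I would attack with characteristic functions.

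For the upper bound, $f(0)$ has characteristic function $\phi(\xi)^n$ with $\phi=\widehat{g_n}$, and a direct computation — from the $\R^2$ identity $\int e^{i\langle\xi,v\rangle}(a+|v|^2)^{-1}\,dv=2\pi K_0(\sqrt a\,|\xi|)$ and differentiation in $a$ — gives $\phi(\xi)=\frac{|\xi|}{\sqrt n}K_1\!\big(\frac{|\xi|}{\sqrt n}\big)$. Hence $\phi$ is nonnegative, radial and decreasing in $|\xi|$, satisfies $1-\phi(\xi)\asymp\frac{|\xi|^2}{n}\log\!\big(2+\frac{n}{|\xi|^2}\big)$ for $|\xi|\le\sqrt n$, and is $O(e^{-c|\xi|/\sqrt n})$ for $|\xi|\ge\sqrt n$; the middle estimate also follows elementarily from $1-\phi(\xi)\asymp\E\min(|\xi|^2|V_1|^2,1)$ together with the exact tail $\P(|V_1|>s)=(ns^2+1)^{-1}$. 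Since $\int\phi^n<\infty$, $f(0)$ has a bounded density $p$ with $\|p\|_\infty\le(2\pi)^{-2}\int\phi(\xi)^n\,d\xi$; substituting $r=|\xi|/\sqrt n$ and splitting the integral at $r\asymp1/\sqrt{n\log n}$ and $r\asymp1$ gives $\int\phi^n\,d\xi=O(1/\log n)$, so $\P(|f(0)-y|<\eps)\le\pi\eps^2\|p\|_\infty=O(\eps^2/\log n)$ uniformly in $y$.

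For the lower bound I would condition on $A_\rho:=\{\wt\L\cap B(0,\rho)=\varnothing\}$ with $\rho$ a large absolute constant: then $\P(A_\rho)=(1-\mu_n(B(0,\rho)))^n=e^{-\Theta(\rho^2)}$ is a fixed positive constant, and on $A_\rho$ the $V_j$ are i.i.d.\ with $|V_j|<1/\rho$, their arguments $\tilde\theta_j$ uniform and independent of the moduli $\tilde r_j=|V_j|$. The key structural point is that $f(0)=\sum_j\tilde r_je^{i\tilde\theta_j}$ lives in the first angular harmonic while $S=\sum_j\tilde r_j^2e^{2i\tilde\theta_j}$ lives in the second, so all cross-terms vanish upon integrating the joint characteristic function over the angles, and a small-argument expansion yields $\E[e^{i\langle\xi,f(0)\rangle+i\langle\eta,S\rangle}\mid A_\rho]=\Phi(\xi,\eta)^n$ with $1-\Phi(\xi,\eta)=\tfrac14|\xi|^2\,\E[\tilde r_1^2]+\tfrac14|\eta|^2\,\E[\tilde r_1^4]+(\text{higher order})$, where $n\E[\tilde r_1^2]\asymp\log n$ and $n\E[\tilde r_1^4]\asymp\rho^{-2}$ are elementary integrals that concentrate enough to hold on a radius-event of probability $\ge c>0$. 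Inverting $\Phi^n$ then shows $(f(0),S)$ has joint density $\gtrsim\rho^2/\log n$ on $\{|w|\le1\}\times\{|s|\le c/\rho\}$, so integrating over $\{|w|<\eps\}\times\{|s|<c/\rho\}$ (and using $c/\rho\le4/5$ for $\rho$ large) gives $\P(|f(0)|<\eps,\ \|S\|_{\mathrm{op}}\le4/5\mid A_\rho)\gtrsim\eps^2\cdot\rho^{-2}\cdot\rho^2/\log n=\eps^2/\log n$; multiplying by $\P(A_\rho)$ finishes it. Specializing the same computation to the $\xi$-variable alone simultaneously yields the companion estimate $\P(|f(0)|<\eps)=\Omega(\eps^2/\log n)$.

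The main obstacle, I expect, is the large-argument regime of the (joint) characteristic function — showing that $\int\phi(\xi)^n\,d\xi$ and $\int\Phi(\xi,\eta)^n\,d\xi\,d\eta$ are dominated by $|\xi|,|\eta|=O(1/\sqrt{\log n})$ and that the complementary region contributes $e^{-\Omega(n)}$ plus terms of order $o(1/\log n)$. For $\phi$ this is immediate from the explicit Bessel form (exponential decay once $|\xi|\gtrsim\sqrt n$); for the joint $\Phi$ one must additionally verify that $1-\Phi(\xi,\eta)$ stays bounded below once $|\eta|\gtrsim1$ and stitch together the mixed regimes, which is technical but brings in no new ideas. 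The remaining ingredients — the small-argument expansions, the harmonic decoupling, and the conditioning on $A_\rho$ — should all be routine.
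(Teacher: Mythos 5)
Your reduction to small-ball estimates for the pair $\bigl(\sum_j V_j,\ \sum_j V_j^2\bigr)$ with $V_j=1/\bar y_j$ is a genuinely different route from the paper, and your upper bound goes through: the identification of the law of $V_j$, the Bessel formula $\phi(\xi)=\frac{|\xi|}{\sqrt n}K_1\bigl(\frac{|\xi|}{\sqrt n}\bigr)$, the estimate $1-\phi(\xi)\asymp\frac{|\xi|^2}{n}\log\bigl(2+\frac{n}{|\xi|^2}\bigr)$, and the resulting bound $\|p\|_\infty\le(2\pi)^{-2}\int\phi^n=O(1/\log n)$ are all correct, and a bounded density immediately gives the claim uniformly in $y$ and for all $\eps<1$. (The paper instead gets the same density bound from the exact Gaussian-mixture representation of Lemma \ref{lem:gaussian-mixture} together with $\E\bigl(1/\sum_iV_i\bigr)=O(n/\log n)$ from Lemma \ref{lem:sum-V}; your Fourier computation is an acceptable substitute.) Your reformulation of the Hessian condition as $|S|\le\frac45$ with $S=\sum_jV_j^2$ is also correct (modulo the sign convention you flag, which the paper is equally loose about).

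The lower bound, however, has a genuine gap at exactly the step you wave off as ``technical but brings in no new ideas.'' To extract a pointwise \emph{lower} bound on the joint density of $(f(0),S)$ from inverting $\Phi(\xi,\eta)^n$, you must show that the inversion integral outside the Gaussian bulk $\{|\xi|\lesssim(\log n)^{-1/2},\ |\eta|\lesssim\rho\}$ contributes $o(\rho^2/\log n)$ in absolute value, and here the structure of the single-term law matters: the law of $(V_1,V_1^2)$ is carried by a two-dimensional surface $\{(v,v^2)\}\subset\R^4$, so $\Phi$ is the Fourier transform of a surface-carried measure. It does not decay the way the characteristic function of an absolutely continuous law does; any decay (and hence integrability of $|\Phi|^n$ and smallness of the non-bulk region, uniformly in $n$ and over all directions of $(\xi,\eta)$, including near-resonant ones where both $\langle\xi,v\rangle$ and $\langle\eta,v^2\rangle$ are nearly stationary on the support) must come from curvature/oscillatory-integral estimates or from a smoothing device with its own quantitative decay input. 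Conditioning on $A_\rho$ does not remove this: the truncated law is still surface-supported. So as written, the second estimate of the lemma is not proved. This is precisely what the paper's argument is engineered to avoid: by Lemma \ref{lem:gaussian-mixture}, conditionally on the mixture variances the vector $f(0)$ is \emph{exactly} Gaussian, so its conditional density at $0$ is exactly $\frac{n}{2\pi\sum_iV_i}$, the event of Lemma \ref{lem:sum-V} has probability $\Omega(1)$, and the Hessian constraint given $f(0)=0$ is handled by elementary conditional second-moment computations, Lemma \ref{lem:conditional-gaussian-covariance}, and Markov's inequality --- no Fourier inversion and no control of a singular joint characteristic function is needed. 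Either supply the missing oscillatory-integral estimates (e.g.\ van der Corput in the radial variable after rescaling by $\sqrt n$, with a separate treatment of the resonant directions), or adopt the mixture representation for the lower bound as the paper does.
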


Throughout this section, we regard $n \in \N$ as fixed. For each $\eps
\in (0, 1)$, we will form a partition of $\S_n^2$ into a collection
$\cA(\eps)$ of $\Theta(n\eps^{-2})$ spherically convex\footnote{Recall
  that a region is \emph{spherically convex} if for any two of its
  points, the region contains a minimal geodesic between them.}
regions satisfying the following properties:
\begin{itemize}
\item The diameter of each region is at most $\eps$.
\item For each region $B \in \cA(\eps)$, there exists a point $x_B \in
  B$ such that $B$ contains all points within distance $\eps/8$ of
  $x_B$.
\end{itemize}
Furthermore, it is possible to choose these partitions so that
$\cA(\eps')$ is a refinement of $\cA(\eps)$ whenever $\eps' <
\eps$. Constructing partitions with the above properties is
straightforward; we omit the details.

\subsection{Upper bound}

For a set $B \in \cA(\eps)$, we say that $B$ is a \emph{critical set}
if it contains a local maximum for $U$ and $\sup_{x, y \in B} \|
\nabla F(x) - \nabla F(y) \|_{op} \le \frac{1}{2}$, where the tangent
spaces at $x$ and $y$ are identified by the rotation along the
spherical geodesic connecting $x$ to $y$.\footnote{The symbol $\nabla$
  when applied to functions or vector fields on the sphere refers to
  the covariant derivative. This gives us simple estimates when
  integrating over geodesics. Note however that in any case, as $\eps
  \rightarrow 0$, the local geometry approaches a flat Euclidean one
  anyway.}

\begin{proof}[Proof of Theorem \ref{thm:maxima}, upper bound]
  Suppose that $B \in \cA(\eps)$ is a critical set. Let $y \in B$ be a
  local maximum of $U$, so that $\nabla^2 U(y) \preceq 0$. Recall also
  from Proposition \ref{prop:Delta-u} that $\trace \nabla^2 U(y) =
  \Delta_S U(y) = 2 \pi$. Thus, $\| \nabla^2 U(y) \|_{op} \le 2 \pi$.

  By the definition of critical set, this means also that $\| \nabla^2
  U(x) \|_{op} \le 10$ for all $x \in B$. Consequently, integrating
  along the geodesic between $y$ and $x_B$, we have $|F(x_B)| \le 10
  \eps$. Then, by Lemma \ref{lem:f-density}, for any $B \in \cA(\eps)$
  we have
  \[ \P\( \text{$B$ is a critical set} \) \le \P\( |F(x_B)| \le 10 \eps \) \le \P\( |f(0)| \le 20 \eps \) = O\( \frac{\eps^2}{\log n} \), \]
  where we have used Corollary \ref{cor:time-change} to translate
  bounds between $F(x_B)$ and $f(0)$.

  Now, let $N(\eps)$ denote the number of critical sets in
  $\cA(\eps)$. Note that $N(\eps)$ increases as $\eps$ decreases, and
  we have $\lim_{\eps \rightarrow 0} N(\eps) = N$ almost surely over
  the randomness of $\L$ (the potential $U$ is smooth away from its
  singularities, and its local maxima are bounded away from its
  singularities). Thus, by the monotone convergence theorem, we have
  \begin{equation}
    \E[N] = \lim_{\eps \rightarrow 0} \E[N(\eps)] \le \limsup_{\eps
      \rightarrow 0} \( |\cA(\eps)| \cdot O\( \frac{\eps^2}{\log n} \) \) = O\( \frac{n}{\log n} \).
  \end{equation}
\end{proof}

\subsection{Lower bound}

Consider a set $B \in \cA(\eps)$ and the stereographic projection
sending $x_B$ to $0$. Defining $u$ and $f$ as in
\eqref{eq:planar-U-def} and \eqref{eq:planar-F-def}, we say that $B$
is a \emph{candidate set} if
\begin{itemize}
\item $\nabla^2 u(0) \preceq -\frac{1}{5}I_2$,
\item $|f(0)| \le \frac{\eps}{100}$, and
\item $|z| \ge \frac{1}{n}$ for all $z \in \L$.
\end{itemize}
We first show that for small enough $\eps$, every candidate set must
contain a local maximum. Indeed, we will show that if $B$ is a
candidate set, then $u$ has a local maximum somewhere in $B(0,
\eps/16)$.

First note that since all points in $\L$ are assumed at least distance
$\frac{1}{n}$ from the origin, by Lemma \ref{lem:basic-estimates}, we
have a uniform upper bound on $|D_2f(x)|$ for $x \in B(0, \eps/5)$
that does not depend on $\eps$. Thus, for $\eps$ small enough, we have
that $D_1f(x) = \nabla^2 u(x) \preceq -\frac{1}{6}I_2$ for all $x \in
B(0, \eps/5)$.

Now, consider $f$ as a map from $\R^2$ to $\R^2$. For any $x \in
\partial B(0, \eps/16)$, we have
\[ \langle f(x), x \rangle \le \langle f(0), x \rangle - \frac{1}{6} \cdot \frac{\eps}{16} < 0. \]
Then, we have the homotopy $f_t(x) = (1 - t)f(x) - tx$ which satisfies
$f_t(x) \ne 0$ for all $t \in [0, 1]$ and $x \in \partial B(0,
\eps/16)$. It follows by standard results about topological degree
(see e.g.\ \cite[\textsection 3]{deimling}) that $f(x) = 0$ for some $x
\in B(0, \eps/16)$, and by our earlier observation that $\nabla^2
u(x)$ is negative definite in this region, this must be a local
maximum.

Finally, by Proposition \ref{prop:P_n-properties} and taking $\eps$
small enough, the disk $B(0, \eps/16)$ in the plane corresponds to
points on the sphere with distance less than $\eps/8$ from
$x_B$. Thus, our local maximum lies within the set $B$.

\begin{proof}[Proof of Theorem \ref{thm:maxima}, lower bound]
  Let $C(\eps)$ denote the number of candidate sets in $\cA(\eps)$, so
  by the preceding discussion it suffices to lower bound
  $\E[C(\eps)]$.

  Consider any $B \in \cA(\eps)$. Lemma \ref{lem:f-density} already
  gives us that
  \begin{equation} \label{eq:candidate-prob-main-term}
    \P\( |f(0)| < \frac{\eps}{100} \;\text{and}\; \nabla^2 u(0) \preceq -\frac{1}{5}I_2 \) = \Omega\( \frac{\eps^2}{\log n} \).
  \end{equation}
  We next show that when the above occurs, very rarely does it happen
  that $|z| \le \frac{1}{n}$ for some $z \in \L$. Indeed, let $z_1,
  z_2, \ldots , z_n$ be the points in $\L$. For each $i$ and $y \in
  \R^2$, we have by Lemma \ref{lem:f-density} that
  \[ \P\( \left| y - \sum_{j \ne i} f(0, z_j) \right| < \frac{\eps}{100} \) = O\( \frac{\eps^2}{\log n} \). \]
  Applying the above with $y = f(0, z_i)$ gives the estimate
  \[ \P\( |z_i| \le \frac{1}{n} \;\text{and}\; |f(0)| < \frac{\eps}{100} \) = \P\( |z_i| \le \frac{1}{n} \) \cdot O\( \frac{\eps^2}{\log n} \) = O\( \frac{\eps^2}{n^2 \log n} \). \]
  Taking a union bound over all $z_i$, this gives
  \[ \P\( |z| \le \frac{1}{n} \;\text{for some $z \in \L$ and}\; |f(0)| < \frac{\eps}{100} \) = O\( \frac{\eps^2}{n \log n} \). \]
  Combining this with \eqref{eq:candidate-prob-main-term}, we find
  that $B$ is a candidate set with probability $\Omega\(
  \frac{\eps^2}{\log n} \)$. Thus, for all small enough $\eps$,
  \[ \E[N] \ge \E[C(\eps)] = |\cA(\eps)| \cdot \Omega\(
  \frac{\eps^2}{\log n} \) = \Omega\( \frac{n}{\log n} \), \]
  as desired.
\end{proof}

\subsection{Proof of Lemma \ref{lem:f-density}} \label{subsec:f-density-proof}

In order to prove Lemma \ref{lem:f-density}, we first analyze the
contribution $f(0, z)$ from a single point $z$ drawn from $\mu_n$. A
helpful property is that $f(0, z)$ turns out to be a mixture of
Gaussians, as explained in the following lemma.

\begin{lemma} \label{lem:gaussian-mixture}
  Let $z \in \R^2$ be a point drawn from $\mu_n$, and let $X =
  \sqrt{n} f(0, z)$. Then $X$ can be sampled as a $2$-dimensional
  Gaussian of covariance $V \cdot I_2$, where $V$ itself is a
  real-valued random variable. Moreover, the probability density
  function $p_V$ of $V$ is
  \[ p_V(x) = \frac{1}{2x^2} e^{-1/2x}. \]
\end{lemma}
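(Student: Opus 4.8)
The plan is to reduce the statement to a one-dimensional computation by exploiting rotational symmetry, and then to recognize the resulting law as a Gaussian scale mixture.

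First I would simplify $f(0,z)$. Setting $x = 0$ in \eqref{eq:planar-F-def} kills the drift term $\frac{1}{n}\cdot\frac{x}{1+|x|^2/n}$, leaving $f(0,z) = z/|z|^2$, so that $X = \sqrt{n}\,z/|z|^2$. Since the $\mu_n$-density depends on $z$ only through $|z|$, and the inversion $z\mapsto z/|z|^2$ commutes with rotations of $\R^2$, the law of $X$ is rotationally symmetric; hence it suffices to identify the law of $|X|^2 = n/|z|^2$. Passing to polar coordinates and writing $s = |z|^2$ shows that $s$ has density $\frac{1}{n(1+s/n)^2}$ on $(0,\infty)$, and the substitution $w = n/s$ then yields that $|X|^2$ has density $(1+w)^{-2}$ on $(0,\infty)$. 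Equivalently, $|z|^2/n$ has density $(1+\sigma)^{-2}$, i.e.\ is distributed as the ratio of two independent $\mathrm{Exp}(1/2)$ variables, and $|X|^2 = n/|z|^2$ is the reciprocal of such a ratio (so it has the same law).

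Next I would match this against the mixture claim. If $X\mid V \sim N(0, VI_2)$ with $V>0$ random, then conditionally on $V$ the quantity $|X|^2/V$ is $\chi^2_2$-distributed, so $|X|^2$ has conditional density $\frac{1}{2v}e^{-w/(2v)}$, and the assertion reduces to the identity $\int_0^\infty \frac{1}{2v}e^{-w/(2v)}\,p_V(v)\,dv = (1+w)^{-2}$ for all $w>0$, with $p_V$ as in the statement. To verify it (and that $p_V$ is a probability density), I would substitute $u = 1/(2v)$, which collapses the left-hand side to $\int_0^\infty u\,e^{-(1+w)u}\,du = (1+w)^{-2}$. Conceptually this just says $V \eqD 1/G$ for $G\sim\mathrm{Exp}(1/2)$, so that $V\cdot\chi^2_2$ is the ratio of two independent $\mathrm{Exp}(1/2)$'s — consistent with the previous paragraph. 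Finally, rotational symmetry of $X$ promotes agreement of the laws of $|X|^2$ to agreement of the full $2$-dimensional laws, which completes the argument.

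I do not expect a real obstacle. The only step requiring a moment's care is observing that the drift term vanishes at the origin, after which $f(0,z)$ is literally an inversion; and since the mixing density $p_V$ is handed to us, confirming it amounts to the elementary integral identity above via the substitution $u = 1/(2v)$.
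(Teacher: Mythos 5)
Your proposal is correct and follows essentially the same route as the paper: observe that the drift term vanishes at the origin so $X=\sqrt{n}\,z/|z|^2$, use rotational invariance of $\mu_n$ to reduce to a radial computation, and verify the Gaussian scale-mixture claim by the same exponential integral identity (the substitution $u=1/(2v)$ is exactly the paper's reparametrization). The only cosmetic difference is that the paper notes $X\eqD z/\sqrt{n}$ and works with the planar density $\frac{1}{\pi(1+|x|^2)^2}$, while you work with the law of $|X|^2$ and the $\chi^2_2$ conditional density; these are equivalent bookkeeping choices.
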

\begin{proof}
  Note that
  \[ \P(|z| < t) = \int_0^t 2\pi r \cdot \frac{1}{\pi n \( 1 + \frac{r^2}{n} \)^2} \,dr = \left[ - \frac{1}{1 + \frac{r^2}{n}} \right]^t_0 = 1 - \frac{1}{1 + \frac{t^2}{n}}. \]
  Hence, since $|X| = \frac{\sqrt{n}}{|z|}$, we have
  \[ \P\(|X| > t \) = 1 - \frac{1}{1 + \frac{1}{t^2}} = \frac{1}{t^2 + 1} = \P\( |z| > \sqrt{n} \cdot t \). \]
  It follows that $X$ actually has the same distribution as
  $z/\sqrt{n}$, and so its probability density function is given by
  \[ p_X(x) = \frac{1}{\pi (1 + |x|^2)^2}. \]
  We then have the integral identity
  \[ \frac{1}{\pi (1 + r^2)^2} = \frac{1}{\pi} \int_0^\infty te^{-t} \cdot e^{-tr^2} \,dt = \int_0^\infty \frac{1}{4 \pi s^3} e^{-1/2s} \cdot e^{-r^2/2s} \,ds \]
  \[ = \int_0^\infty \frac{1}{2s^2} e^{-1/2s} \cdot \( \frac{1}{2\pi s} e^{-r^2/2s} \) \,ds, \]
  which shows that $X$ can be sampled as a 2-dimensional Gaussian of
  covariance $V \cdot I_2$, where $V$ itself is a real-valued random
  variable with density $p_V(x) = \frac{1}{2x^2} e^{-1/2x}$.
\end{proof}

The next lemma provides estimates for the sum of $n$ i.i.d.\ copies of
the random variable $V$ from Lemma \ref{lem:gaussian-mixture}, which
will be relevant when we consider the sum of the contributions to
$f(0)$ from all $n$ points.

\begin{lemma} \label{lem:sum-V}
  Let $V$ be a non-negative random variable with probability density
  $p_V(x) = \frac{1}{2x^2} e^{-1/2x}$. Let $V_1, \ldots , V_n$ be $n$
  i.i.d.\ random variables each with the same distribution as
  $V$. Then, we have
  \[ \P\( \sum_{i = 1}^n V_i^2 \le \frac{n^2}{100} \text{ and } \sum_{i = 1}^n V_i \le 4n \log n \) = \Omega(1) \]
  and
  \[ \E\(\frac{1}{\sum_{i = 1}^n V_i}\) = O\( \frac{n}{\log n} \). \]
\end{lemma}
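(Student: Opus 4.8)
Here is how I would attack Lemma~\ref{lem:sum-V}. The starting observation is that integrating the density gives $\P(V\le s)=e^{-1/(2s)}$ for $s>0$, so $1/V$ is exponential with mean $2$, and one may realize $V_i\eqD 1/(2W_i)$ with $W_1,\dots,W_n$ i.i.d.\ standard exponentials. Both assertions will then be obtained by restricting attention to a well-behaved part of the probability space.

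For the first assertion I would fix a large absolute constant $a$ (taking $a=100$ will do) and condition on the event $G:=\{W_i\ge a/n\text{ for all }i\}$, which has probability $\big(e^{-a/n}\big)^n=e^{-a}=\Omega(1)$. By memorylessness, conditionally on $G$ we may write $W_i=a/n+E_i$ with $E_i$ i.i.d.\ standard exponential, so that elementary estimates for $\int_0^\infty e^{-x}(a/n+x)^{-j}\,dx$ give
\[ \E\Big[\textstyle\sum_{i}V_i \;\Big|\; G\Big]=\tfrac n2\,\E\big[(a/n+E)^{-1}\big]\le\tfrac n2(\log n+2),\qquad \E\Big[\textstyle\sum_{i}V_i^2 \;\Big|\; G\Big]=\tfrac n4\,\E\big[(a/n+E)^{-2}\big]\le\tfrac{n^2}{4a}. \]
Markov's inequality then yields $\P\big(\sum_iV_i>4n\log n\mid G\big)\le\tfrac14$ for all $n$ outside a bounded range and $\P\big(\sum_iV_i^2>n^2/100\mid G\big)\le 25/a=\tfrac14$; hence the event in the statement has conditional probability at least $\tfrac12$ given $G$, and so unconditional probability at least $\tfrac12e^{-a}=\Omega(1)$. (The finitely many small $n$ contribute a positive constant since the event always has positive probability, so they are harmless.)

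For the second assertion I would in fact establish the sharper bound $\E\big[(\sum_iV_i)^{-1}\big]=O\big(\tfrac1{n\log n}\big)$, which is the form actually used in Lemma~\ref{lem:f-density} and which implies the stated estimate. Write $\E\big[(\sum_iV_i)^{-1}\big]=\int_0^\infty\P\big(\sum_iV_i\le s\big)\,s^{-2}\,ds$ and split the integral at $s=n/\log n$ and $s=\tfrac1{100}n\log n$. On $\big(0,n/\log n\big]$ the crude bound $\P(\sum_iV_i\le s)\le\P(V_i\le s\ \forall i)=e^{-n/(2s)}$ gives a contribution $\tfrac2n\int_{(\log n)/2}^\infty e^{-u}\,du=O(n^{-3/2})$. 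On $\big(\tfrac1{100}n\log n,\infty\big)$ we use $\P(\cdot)\le1$, contributing $\int_{n\log n/100}^\infty s^{-2}\,ds=\tfrac{100}{n\log n}$, which is the dominant term. The substantive part is the middle range, where one needs $\P(\sum_iV_i\le s)$ superpolynomially small; by monotonicity it is enough to take $s=\tfrac1{100}n\log n$, i.e.\ to show $\sum_iV_i$ is of order $n\log n$ (not merely of order $n$) with overwhelming probability. I would prove this by a layered estimate: for each scale $2^k$ with $1\le 2^k\le\sqrt n$ the count $\#\{i:V_i\in[2^k,2^{k+1})\}$ is $\mathrm{Binom}\big(n,\Theta(2^{-k})\big)$ with mean at least $\Omega(\sqrt n)$, so a Chernoff bound and a union bound over the $O(\log n)$ scales show that, with probability $1-e^{-\Omega(\sqrt n)}$, every such count is at least half its mean, and on that event $\sum_iV_i\ge\sum_k\big(\tfrac12\Theta(n2^{-k})\big)2^k=\Theta(n\log n)\ge\tfrac1{100}n\log n$. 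Thus the middle range contributes at most $e^{-\Omega(\sqrt n)}\cdot\tfrac{\log n}{n}=o\big(\tfrac1{n\log n}\big)$; combining the three ranges, and using the trivial bound $\E[(\sum_iV_i)^{-1}]\le 2\,\E[\min_iW_i]=2/n$ to absorb small $n$, gives the result. (An alternative to the layered estimate is an exponential-moment bound based on $\E e^{-\lambda V}=1-\Theta\big(\lambda\log(1/\lambda)\big)$ as $\lambda\to0$, the logarithmic correction encoding the heavy tail $\P(V>t)=1-e^{-1/(2t)}\sim\tfrac1{2t}$.)

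The first assertion is routine once the constant-probability conditioning event $G$ is isolated; the main obstacle is the middle-range tail bound for $\sum_iV_i$ in the second assertion, where the index-$1$ heavy tail of $V$ must be exploited to see that $\sum_iV_i$ concentrates near $\tfrac12 n\log n$ rather than near a multiple of $n$.
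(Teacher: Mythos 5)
Your proof is correct and takes essentially the same route as the paper: for the first claim, truncation of the $V_i$ at scale $n$ together with conditional first and second moments and Markov's inequality; for the second, a dyadic count of the $V_i$ over $\Theta(\log n)$ scales with a binomial concentration bound to show $\sum_i V_i = \Theta(n\log n)$ with overwhelming probability, with the extreme small and large ranges handled by crude bounds. The differences are only cosmetic (you use the representation $V = 1/(2W)$ with $W$ exponential and the identity $\E\bigl[T^{-1}\bigr] = \int_0^\infty \P(T\le s)\,s^{-2}\,ds$ in place of the paper's direct decomposition of $\E[1/S]$ into three events), and you in fact establish the sharper bound $O\bigl(\tfrac{1}{n\log n}\bigr)$, which is also what the paper's own argument delivers and what is used in the proof of Lemma \ref{lem:f-density}.
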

\begin{proof}
  For the first bound, define for each $i$ the event $E_i = \{ V_i \le
  n/100 \}$, and write $E = \bigcap_{i = 1}^n E_i$. For each $i$, we
  have
  \begin{align*}
    \E[V_i \mid E_i] &= \int_0^{n/100} \frac{1}{2t} e^{-1/2t} \,dt \le \log n \\
    \E[V_i^2 \mid E_i] &= \int_0^{n/100} \frac{1}{2} e^{-1/2t} \,dt \le \frac{n}{200}.
  \end{align*}
  By the independence of the $V_i$, we thus have
  \begin{align*}
    \E\( \sum_{i = 1}^n V_i \,\middle|\, E \) \le n \log n &\implies \P\( \sum_{i = 1}^n V_i \ge 4 n \log n \,\middle|\, E \) \le \frac{1}{4} \\
    \E\( \sum_{i = 1}^n V_i^2 \,\middle|\, E \) \le \frac{n^2}{200} &\implies \P\( \sum_{i = 1}^n V_i^2 \ge \frac{n^2}{100} \,\middle|\, E \) \le \frac{1}{2}.
  \end{align*}
  Thus, we have
  \begin{align*}
    \P\( \sum_{i = 1}^n V_i^2 \le \frac{n^2}{100} \text{ and } \sum_{i = 1}^n V_i \le 4n \log n \) &\ge \frac{1}{4} \P(E) = \frac{1}{4} \P(E_1)^n \\
    &= \frac{1}{4} \(e^{-50/n}\)^n = \Omega(1).
  \end{align*}

  For the second bound, let $S = \frac{1}{n} \sum_{i = 1}^n
  V_i$. Consider the three events
  \[ E_1 = \left\{ S < \frac{1}{n} \right\}, \quad E_2 = \left\{ \frac{1}{n} \le S \le \frac{1}{48} \log n \right\}, \quad E_2 = \left\{ \frac{1}{48} \log n < S \right\}. \]
  For the first event, we have
  \begin{align}
    \E\( \1_{E_1} \cdot \frac{1}{S} \) &= \int_n^\infty \P(S < 1/s) \,ds \le \int_n^\infty \P(V_1 < 1/s) \,ds \nonumber \\
    &= \int_n^\infty e^{-s/2} \,ds = 2e^{-n}. \label{eq:E1-bound}
  \end{align}

  To control the second event, let $m = \ceil{\frac{1}{3} \log_2 n}$,
  and for each positive integer $k \le m$, let $N_k$ denote the number
  of $V_i$ with $V_i \in [2^{k-1}, 2^{k}]$. Note that
  \[ \E N_k = n \cdot \P(2^{k-1} \le V_i \le 2^{k}) = n \cdot \(e^{-2^{-k}} - e^{-2^{-k-1}}\) \ge \frac{n}{2^{k+2}}. \]
  By Hoeffding's inequality, we then have
  \[ \P\( N_k \le \frac{1}{8} \cdot n \cdot 2^{-k} \) \le \exp\( - \frac{1}{32} \cdot n \cdot 2^{-2k} \). \]

  Define the event $E' = \bigcap_{k = 1}^m \{ N_k \ge \frac{1}{8} \cdot
  n \cdot 2^{-k} \}$, and note that on the event $E'$, we have
  \[ S = \frac{1}{n} \sum_{i = 1}^n V_i = \frac{1}{n} \sum_{k = 1}^m N_k \cdot 2^{k-1} \ge \frac{1}{n} \sum_{k = 1}^m \frac{1}{8} \cdot n \cdot 2^{-k} \cdot 2^{k-1} = \frac{1}{16} m, \]
  so that $E' \cap E_2 = \emptyset$. Consequently,
  \[ \P(E_2) \le 1 - \P(E') \le \sum_{k = 1}^m \exp\( - \frac{1}{8} \cdot n \cdot 2^{-2k} \) = O(n^{-2}), \]
  and so
  \begin{equation} \label{eq:E2-bound}
    \E\(\1_{E_2} \cdot \frac{1}{S} \) \le n \cdot \P(E_2) = O(n^{-1}).
  \end{equation}
  Finally, we also have
  \[ \E\(\1_{E_3} \cdot \frac{1}{S} \) \le \frac{48}{\log n}. \]
  Combining this with \eqref{eq:E1-bound} and \eqref{eq:E2-bound}, we
  conclude that $\E\( \frac{1}{S} \) = O\( \frac{1}{\log n} \)$, as
  desired.
\end{proof}

Finally, we need an elementary estimate for certain conditional
Gaussian covariances.

\begin{lemma} \label{lem:conditional-gaussian-covariance}
  Consider an $n$-dimensional Gaussian $Z = (Z_1, Z_2, \ldots , Z_n)$,
  and write $S = \sum_{i = 1}^n Z_i$. Let $\Sigma$ and $\Sigma'$ be
  the covariance matrices of $Z$ and $Z$ conditioned on $S = 0$,
  respectively, i.e., we have
  \[ \Sigma_{ij} = \E[Z_iZ_j], \quad\text{and}\quad \Sigma'_{ij} = \E[Z_iZ_j \mid S = 0]. \]
  Then,
  \[ \sum_{i,j = 1}^n (\Sigma'_{ij})^2 \le \sum_{i,j = 1}^n \Sigma_{ij}^2. \]
\end{lemma}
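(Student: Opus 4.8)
The plan is to reduce the statement to a one-line computation with positive semidefinite matrices. Write $a = \1 \in \R^n$ for the all-ones vector, so $S = a^\top Z$, and (as the phrasing $\Sigma_{ij} = \E[Z_iZ_j]$ suggests) take $Z$ to be centered, so that $\Sigma$ is the covariance of $Z$; since $\{S = 0\}$ is a symmetric event, conditioning on it leaves the mean at $0$, hence $\Sigma'$ is likewise the covariance of $Z$ given $S = 0$. If $\Var(S) = a^\top\Sigma a = 0$ the conditioning is vacuous and $\Sigma' = \Sigma$, so I may assume $c := a^\top\Sigma a > 0$. The standard formula for conditioning a jointly Gaussian vector on the value of a linear functional gives
\[ \Sigma' = \Sigma - \frac1c\,\Sigma a a^\top\Sigma = \Sigma - \frac1c\,vv^\top, \qquad v := \Sigma a . \]
The inequality to be proved is precisely $\|\Sigma'\|_F^2 \le \|\Sigma\|_F^2$, where $\|M\|_F^2 = \sum_{i,j}M_{ij}^2 = \trace(M^\top M)$ denotes the squared Frobenius norm.

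Next I would invoke the algebraic identity
\[ \|\Sigma\|_F^2 - \|\Sigma'\|_F^2 = \trace\!\big((\Sigma - \Sigma')(\Sigma + \Sigma')\big), \]
which is valid because $\Sigma$ and $\Sigma'$ are symmetric, so that the cross terms $\trace(\Sigma\Sigma')$ and $\trace(\Sigma'\Sigma)$ cancel. Now $\Sigma - \Sigma' = \tfrac1c vv^\top \succeq 0$ since $c > 0$, and $\Sigma + \Sigma' \succeq 0$ since $\Sigma$ and $\Sigma'$ are both covariance matrices, hence positive semidefinite. Since $\trace(AB) = \trace(A^{1/2}BA^{1/2}) \ge 0$ for any positive semidefinite $A, B$, the right-hand side is nonnegative, which is the claim.

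I do not expect a genuine obstacle here: the argument uses only the Gaussian conditioning formula and the elementary fact that $\trace(AB) \ge 0$ for positive semidefinite $A, B$. The one point worth getting right is the bookkeeping — writing $\|\Sigma\|_F^2 - \|\Sigma'\|_F^2$ as $\langle \Sigma - \Sigma',\,\Sigma + \Sigma'\rangle_F$ rather than expanding $\Sigma' = \Sigma - \tfrac1c vv^\top$ term by term makes the positivity manifest. (The direct expansion also works: it reduces the claim to $(a^\top\Sigma^2 a)^2 \le 2(a^\top\Sigma a)(a^\top\Sigma^3 a)$, which holds — even without the factor $2$ — by Cauchy--Schwarz in the eigenbasis of $\Sigma$.) If one prefers not to cite positive semidefiniteness of the conditional covariance as a black box, it follows from $\Sigma \succeq \tfrac1c vv^\top$, which is just Cauchy--Schwarz for the bilinear form $(x,y) \mapsto x^\top\Sigma y$ applied to $x$ and $a$.
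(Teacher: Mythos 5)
Your proof is correct, and it reaches the conclusion by a somewhat different route than the paper. The paper never computes $\Sigma'$: it shows $\Sigma' \preceq \Sigma$ purely probabilistically, by writing $\langle v, \Sigma v\rangle = \E\big[\E[\langle v,Z\rangle^2 \mid S]\big] \ge \E\big[\E[\langle v, Z-\E[Z\mid S]\rangle^2 \mid S]\big] = \langle v,\Sigma' v\rangle$ for every $v$ (using that for Gaussians the conditional covariance does not depend on the value of $S$, and the conditional mean vanishes at $S=0$), and then invokes monotonicity of the Hilbert--Schmidt norm under the Loewner order on positive semidefinite matrices. You instead compute $\Sigma'$ explicitly via the Gaussian conditioning (Schur complement) formula, obtaining the rank-one identity $\Sigma - \Sigma' = \tfrac1c\,\Sigma a a^\top \Sigma$, and then verify the Frobenius inequality by the trace computation $\|\Sigma\|_F^2 - \|\Sigma'\|_F^2 = \trace\big((\Sigma-\Sigma')(\Sigma+\Sigma')\big) \ge 0$. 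Note that this last step is in effect a proof of the very fact the paper cites implicitly (Frobenius norm is monotone on PSD matrices ordered in the Loewner sense), so the two arguments converge at the end; the genuine difference is in how $\Sigma \succeq \Sigma'$ is obtained. Your version buys an exact formula for the defect and forces you to treat the degenerate case $a^\top\Sigma a = 0$ separately (which you do correctly); the paper's version is slightly more robust in that it needs no nondegeneracy assumption and no explicit conditioning formula, only the law-of-total-variance argument. Both proofs, like the statement itself, interpret conditioning on the null event $\{S=0\}$ through the Gaussian regular conditional distribution and tacitly take $Z$ centered, which is exactly the setting in which the lemma is applied.
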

\begin{proof}
  Fix any $v \in \R^n$. We have
  \begin{align*}
    \langle v, \Sigma v \rangle &= \E\( \langle v, Z \rangle^2 \) = \E\( \E\( \langle v, Z \rangle^2 \mid S \) \) \\
    &\ge \E\( \E\( \langle v, Z - \E[Z \mid S] \rangle^2 \mid S \) \) \\
    &= \E\( \langle v, Z \rangle^2 \mid S = 0 \) = \langle v, \Sigma' v \rangle.
  \end{align*}
  Since this holds for all $v$, it follows that $\Sigma' \preceq
  \Sigma$. Consequently, the Hilbert-Schmidt norm of $\Sigma'$ is less
  than or equal to that of $\Sigma$, which is the desired inequality.
\end{proof}

We are now ready to prove Lemma \ref{lem:f-density}.

\begin{proof}[Proof of Lemma \ref{lem:f-density}]
  Let $V_i$ be as in Lemma \ref{lem:sum-V}, and for each $i$, let
  $X_i$ be drawn from a Gaussian of covariance $V_i \cdot I_2$. In
  light of Lemma \ref{lem:gaussian-mixture}, we may create a coupling
  in which
  \[ f(0) = \frac{1}{\sqrt{n}} \sum_{i = 1}^n X_i. \]
  Thus, $f(0)$ is distributed as a mixture of centered Gaussians,
  where the covariance has the distribution of $\frac{1}{n} \sum_{i =
    1}^n V_i$. Let $p_f$ denote the probability density of
  $f(0)$. Then, we have by the continuity of $p_f$ and Lemma
  \ref{lem:sum-V} that
  \[ \limsup_{\eps \rightarrow 0} \eps^{-2} \P\( |f(0)| < \eps \) = p_f(0) = \E\( \frac{n}{2 \pi \sum_{i=1}^n V_i} \) = O\( \frac{1}{\log n} \), \]
  proving the first bound in the case $y = 0$. The general case
  follows similarly, since $p_f$ is maximized at $0$ (being a mixture
  of centered Gaussian densities).

  For the second bound, consider any point $z = (z_1, z_2) \in
  \R^2$. A direct calculation shows that
  \begin{align*}
    f_z(0) &= \( -\frac{z_1}{z_1^2 + z_2^2}, - \frac{z_2}{z_1^2 + z_2^2} \) \\
    \nabla f_z(0) &= \frac{1}{(z_1^2 + z_2^2)^2} \left[\begin{tabular}{cc}
      $z_1^2 - z_2^2$ & $2z_1z_2$ \\
      $2z_1z_2$ & $z_2^2 - z_1^2$
      \end{tabular}\right] - \frac{1}{n} I_2.
  \end{align*}
  Thus, writing $X_i = (x_{i,1}, x_{i,2})$ and summing over all $i$,
  we see that
  \[ \nabla f(0) = \left[\begin{tabular}{cc}
      $A$ & $B$ \\
      $B$ & $-A$
      \end{tabular}\right] - I_2, \]
  where $A = \frac{1}{n} \sum_{i = 1}^n x_{i,1}^2 - x_{i,2}^2$ and $B = \frac{2}{n} \sum_{i =
    1}^n x_{i,1}x_{i,2}$.

  Now, define the event $E = \left\{ \sum_{i = 1}^n V_i^2 \le
  \frac{n^2}{100} \text{ and } \sum_{i = 1}^n V_i \le 4n \log
  n \right\}$, so that Lemma \ref{lem:sum-V} gives $\P(E) =
  \Omega(1)$. Then,
  \begin{align*}
    \E[B^2 \mid f(0) = 0, E] &= \frac{4}{n^2} \sum_{i = 1}^n \sum_{j = 1}^n \E[x_{i,1}x_{j,1} x_{i,2}x_{j,2} \mid f(0) = 0, E] \\
    &= \frac{4}{n^2} \sum_{i, j = 1}^n \E[x_{i,1}x_{j,1} \mid f(0) = 0, E]^2 \\
    &\le \frac{4}{n^2} \sum_{i, j = 1}^n \E[x_{i,1}x_{j,1} \mid E]^2 = \frac{4}{n^2} \E\left[ \sum_{i = 1}^n V_i^2 \,\middle|\, E \right] \le \frac{1}{25},
  \end{align*}
  where the first inequality step follows from Lemma
  \ref{lem:conditional-gaussian-covariance}. By Markov's inequality
  this implies that
  \begin{equation} \label{eq:B-bound}
    \P\( |B| \ge \frac{2}{5} \,\middle|\, f(0) = 0, E \) \le \frac{1}{4}.
  \end{equation}
  A nearly identical argument shows that the above inequality also
  holds with $B$ replaced by $A$. Indeed, the quantities under
  consideration are invariant under the rotation $(x_1, x_2) \mapsto
  \( \frac{x_1 + x_2}{\sqrt{2}}, \frac{x_1 - x_2}{\sqrt{2}} \)$ which
  takes $B$ to $A$. Define the function
  \[ r(x) = \P\( \nabla f(0) \preceq - \frac{1}{5} I_2 \,\middle|\, f(0) = x, E \). \]
  Then, \eqref{eq:B-bound} and the corresponding inequality for $A$
  imply that
  \[ r(0) \ge \P\( \max(|B|, |A|) \le \frac{2}{5} \,\middle|\, f(0) = 0, E \) \ge \frac{1}{2}. \]
  Also, let $p_{f \mid E}$ denote the probability density of $f$
  conditioned on the event $E$. Note that
  \[ p_{f \mid E}(0) = \E\( \frac{n}{2\pi \sum_{i = 1}^n V_i} \,\middle|\, E \) \ge \frac{1}{8\pi \log n}. \]
  Moreover, it can be checked that $r$ and $p_{f \mid E}$ are both
  continuous functions. Thus,
  \begin{align*}
    &\liminf_{\eps \rightarrow 0} \( \eps^{-2} \cdot \P\( \nabla f(0) \preceq -\frac{1}{5} I_2 \text{ and }  |f(0)| \le \eps \) \) \\
    &\qquad\ge \liminf_{\eps \rightarrow 0} \( \eps^{-2} \cdot \P(E) \cdot \int_{x \in B(0, \eps)} p_{f \mid E}(x) r(x) \,dx \) = \Omega(1).
  \end{align*}
\end{proof}

\section{Gravitational allocation for roots of a Gaussian polynomial}\label{sec:gaussian-polynomial-roots}

In this section we study gravitational allocation to the roots of a
certain Gaussian random polynomial and prove Proposition \ref{prop:gaf}. Recall that we look at the polynomial given by \eqref{eq45}.
We bring the roots $\lambda_1, \ldots , \lambda_n\in\C$ of $p$ to the
sphere via stereographic projection. More explicitly, letting $P_n$ be
the rescaled stereographic projection map defined in Section
\ref{sec:prelim} and viewing the $\lambda_k$ as lying in the
horizontal plane in $\R^3$, it turns out that
\[ \L = \left\{ P_n^{-1}(\sqrt{n} \lambda_k) \right\}_{k=1}^n \]
is a rotationally equivariant random set of $n$ points on
$\S^2_n$. The rotational equivariance comes from the particular choice
of coefficients for $p$, see \cite[Chapter 2.3]{hkpv09}.

\begin{proof}[Proof of Proposition \ref{prop:gaf}]
  By \eqref{eq:L^1-identity} and rotational symmetry it suffices to
  compute $\E|F(x)|$ for any fixed point $x \in \S^2_n$. It is
  convenient to pick $x = (0, 0, -\sqrt{n/4\pi}) =
  P_n^{-1}(0)$. Letting $f$ be as in \eqref{eq:planar-F-def}, we then
  have
  \[ f(0, \sqrt{n}\lambda_k) = \frac{1}{\sqrt{n} \cdot \bar{\lambda}_k}, \]
  where complex numbers are interpreted as two-dimensional vectors in
  the horizontal plane. Using Proposition \ref{prop:P_n-properties} to
  convert between $F(x)$ and $f(0)$, we then have
  \[ F(x) = \sqrt{\pi} f(0) = \sqrt{\frac{\pi}{n}} \sum_{k = 1}^n \bar{\lambda}_k^{-1} = \sqrt{\frac{\pi}{n}} \cdot \frac{\bar{\zeta}_1 \cdot \sqrt{n}}{\bar{\zeta}_0 \cdot 1} = \sqrt{\pi} \cdot \frac{\bar{\zeta}_1}{\bar{\zeta}_0}, \]
  which gives a simple expression for $F$ in terms of two independent
  complex Gaussians. Taking expectations of the magnitude, we obtain
  \[ \E |F(x)| = \sqrt{\pi} \E \frac{|\bar{\zeta}_1|}{|\bar{\zeta}_0|} = \frac{\pi\sqrt{\pi}}{2}, \]
  which together with \eqref{eq:L^1-identity} establishes
  \eqref{eq:poly-roots-bound}.
\end{proof}

\section{Open problems}
\label{sec:openproblems}

\begin{enumerate}
     \item We have proved $O(\sqrt{\log n})$ bounds on typical
       distances for gravitational allocation to uniform points, but
       our results do not rule out the possibility of a small set of
       points with allocation distances much larger than $\sqrt{\log
         n}$ or, equivalently, of some allocation cells having large
       diameter. Let $z \in \L$ be chosen uniformly at random, and
       consider the cell $B(z)$ allocated to $z$. What is the law of
       the diameter of $B(z)$? Furthermore, what is the law of the
       maximal basin diameter, i.e., the law of $\max\{
       |x-\psi(x)|\,:\, x\in\BB S^2,\,\psi(x)\in \L \}$?

	\item The matching algorithm we consider in Corollary \ref{prop2}
      considers the gravitational field defined by the points
      $\cB$. One could attempt to define and analyze a matching
      algorithm where $\cA$ and $\cB$ are viewed as sets of particles
      undergoing dynamics where they exert attractive forces on
      particles of the opposite kind (as a variant, they may also
      repel particles of the same kind). One difficulty is that after
      the dynamics have evolved for some time the points are no longer
      uniformly distributed.

	\item In Corollary \ref{prop2} we consider a matching algorithm
      defined in terms of gravitational allocation. An alternative
      greedy matching algorithm can be obtained by iteratively
      matching nearest pairs of points, i.e., we find
      $i,j\in\{1,\dots,n\}$ such that $|a_i-b_j|$ is minimized, we
      define $\varphi(a_i)=b_j$, and we repeat the procedure with
      $\mcl A\setminus\{a_i \}$ and $\mcl B\setminus\{b_j
      \}$. \cite[Theorem 6]{hpps09} suggests that an upper bound for
      the average matching distance is
      $\int_{0}^{\sqrt{n}}r^{-0.496\dots}\,dr = \Theta(n^{0.252\dots
      })$. Can this bound be improved?
\end{enumerate}

\bibliography{mybib}
\bibliographystyle{hmralphaabbrv}

\end{document}